\documentclass[table]{amsart} 
\usepackage{amsmath}
\usepackage[margin=1.0in]{geometry}
\usepackage{amscd,amsmath,amsxtra,amsthm,amssymb,stmaryrd,xr,mathrsfs,mathtools,enumerate,commath, comment, enumitem, graphicx}
\usepackage{amsfonts, amssymb, amsthm, amsmath, braket, hyperref, mathtools, comment, mathrsfs, enumitem, cleveref}
\usepackage{stmaryrd}
\usepackage{amsfonts}
\usepackage{orcidlink}
\usepackage{multirow}
\usepackage{xcolor}
\usepackage{commath}
\usepackage{comment}
\usepackage{tikz-cd}
\usepackage{tkz-graph}
\usepackage{colonequals}
\usepackage{hyperref}
\usepackage{graphicx}
\usepackage{bigstrut}
\usepackage{tabularx}
\numberwithin{equation}{section}
\usepackage{longtable, multirow, array}
\usepackage{cleveref}
\crefname{section}{§}{§§}
\Crefname{section}{§}{§§}
\newcommand{\genlegendre}[4]{%
	\genfrac{(}{)}{}{#1}{#3}{#4}%
	\if\relax\detokenize{#2}\relax\else_{\!#2}\fi
}
\newcommand{\legendre}[3][]{\genlegendre{}{#1}{#2}{#3}}

\usepackage{longtable} 
\usepackage{pdflscape} 
\usepackage{booktabs}
\usepackage{hyperref}
\definecolor{vegasgold}{rgb}{0.77, 0.7, 0.35}
\definecolor{darkgoldenrod}{rgb}{0.72, 0.53, 0.04}
\definecolor{gold(metallic)}{rgb}{0.83, 0.69, 0.22}
\hypersetup{
 colorlinks=true,
 linkcolor=darkgoldenrod,
 filecolor=brown,      
 urlcolor=gold(metallic),
 citecolor=darkgoldenrod,
 }

\usepackage[all,cmtip]{xy}
\DeclareFontFamily{U}{wncy}{}
\DeclareFontShape{U}{wncy}{m}{n}{<->wncyr10}{}
\DeclareSymbolFont{mcy}{U}{wncy}{m}{n}
\DeclareMathSymbol{\Sh}{\mathord}{mcy}{"58}
\usepackage[T2A,T1]{fontenc}
\usepackage[OT2,T1]{fontenc}

\usepackage{tikz}
\usetikzlibrary{shapes.geometric}
\usetikzlibrary{decorations.markings}
\tikzset{every loop/.style={min distance=10mm,looseness=10}}
\tikzstyle{vertex}=[auto=left,circle,minimum size=1pt,inner sep=0pt]

\newtheorem{theorem}{Theorem}[section]
\newtheorem{lemma}[theorem]{Lemma}

\newtheorem*{theorem*}{Theorem}
\newtheorem*{ass*}{Assumption}

\newtheorem{corollary}[theorem]{Corollary}
\newtheorem{remark}[theorem]{Remark}

\newtheorem{proposition}[theorem]{Proposition}

\newcommand{\Gal}{\mathrm{Gal}}

\newcommand{\Z}{\mathbb{Z}}

\newcommand{\Q}{\mathbb{Q}}
\newcommand{\F}{\mathbb{F}}

\newcommand{\Sel}{\mathrm{Sel}}

\newcommand{\rank}{\mathrm{rank}}

\DeclareFontEncoding{OT2}{}{}
\DeclareSymbolFont{cyrletters}{OT2}{wncyr}{m}{n}
\DeclareMathSymbol{\Sha}{\mathalpha}{cyrletters}{"58}

\numberwithin{equation}{section}

\begin{document}

\title{$2$-Selmer groups, $2$-class groups, and congruent numbers}
\author[Shamik Das]{Shamik Das}
\address[Das]{Department of Mathematics and Statistics, IIT Kanpur, India}
\email{shamikd@iitk.ac.in}

\author[Debajyoti De]{Debajyoti De}
\address[De]{Department of Mathematics, IIT Madras, India}
\email{debajyotide20@gmail.com}

\author[Sudipa Mondal]{Sudipa Mondal}
\address[Mondal]{Department of Mathematics, IIT Madras, India}
\email{sudipa.mondal123@gmail.com}

\keywords{}
\subjclass[2020]{Primary : 11G05, 11R29; Secondary : 11E04, 11A05}
\keywords {congruent number, elliptic curve, class number}

\begin{abstract}
In this article, we study necessary conditions for certain square-free integers to be congruent numbers. Our method uses divisibility properties of class numbers of related imaginary quadratic fields. We first consider positive square-free integers of the form $n = p_1 p_2 \cdots p_t q,$ where each prime $p_i \equiv 5 \pmod{8}$ and $q \equiv 7 \pmod{8}$. We show that if such an integer $n$ is a congruent number, then the class number $h(-n)$ of the quadratic field $\mathbb{Q}(\sqrt{-n})$ satisfies a specific divisibility condition. Furthermore, we provide quantitative lower bounds on the number of non-congruent numbers of this form.  Next, we study integers of the form  $n = p_1 p_2 \cdots p_t q,$ with $p_i \equiv 5 \pmod{8}$ and $q \equiv 3 \pmod{8}$. Assuming that $n$ is a congruent number, we obtain a congruence modulo powers of $2$ between the class numbers of the fields $\mathbb{Q}(\sqrt{-n})$ and $\mathbb{Q}\!\left(\sqrt{-p_1 p_2 \cdots p_t}\right)$.
\end{abstract}

\maketitle

\section{Introduction}
One of the oldest problems in the history of number theory is to determine which positive integers $n$ can occur as the common difference of a three-term arithmetic progression of rational squares. That is, given a positive integer $n$, find a rational number $a \in \mathbb{Q}^\times$ such that 
both $a^2 \pm n$ are rational squares. Such positive integers $n$ are called \emph{congruent numbers}. There is a one-to-one correspondence between right triangles with area $n$ and three-term arithmetic progressions of squares with common difference $n$. More precisely, the sets
$$\left \{(a,b,c): a^2 + b^2 = c^2,\ \frac{ab}{2} = n \right\} \quad \text{and} \quad
\{(r,s,t): s^2 - r^2 = n,\ t^2 - s^2 = n\}$$
are in one-to-one correspondence by
$$(a,b,c) \mapsto \left(\frac{b-a}{2},\, \frac{c}{2},\, \frac{b+a}{2}\right),\qquad (r,s,t) \mapsto (t-r,\, t+r,\, 2s).$$ 
Hence, a positive integer is a congruent number if it is the area of a right-angled triangle whose sides all have rational length. The problem of determining whether a given positive integer is a congruent number or not is known as the \emph{congruent number problem}. At present, no algorithm is known for determining whether a given positive integer $n$ is congruent. It is clear that $n$ is a congruent number if and only if its square-free part is a congruent number. Hence, we restrict our attention to square-free integers. 
It turns out that the congruent number property is also equivalent to the existence of a nontrivial rational solution of the equation $y^2 = x^3-n^2 x.$ This equation has three obvious rational solutions, namely $(0,0), (n,0),$ and $(-n,0)$, all lying on the line $y=0$. For $n >0$, there is a one-to-one correspondence between the following two sets:
$$\left\{(a,b,c) : a^2 + b^2 = c^2,\ \frac{ab}{2} = n \right \} \quad \text{and} \quad \{(x,y) : y^2 = x^3 - n^2 x,\ y \neq 0\}.$$
Mutually inverse correspondences between these sets are
$$(a,b,c) \mapsto \left(\frac{n b}{\,c - a\,},\ \frac{2n^2}{\,c - a\,}\right),\qquad (x,y) \mapsto \left(\frac{x^2 - n^2}{y},\ \frac{2nx}{y},\ \frac{x^2 + n^2}{y}\right).$$
Let $(x,y)$ be a torsion point on the elliptic curve \begin{equation}\label{Congruent curve}
    E_n : y^{2} = x^{3} - n^{2}x.
\end{equation} 
By the Nagell–Lutz theorem \cite[Corollary~7.2]{silverman2009arithmetic}, we have $x,y \in \mathbb{Z}$ and either $y=0$ or $y^{2}\mid 4n^{6}$.  
The Diophantine equation
\[
    y^{2} = x^{3} - n^{2}x, \quad x,y \in \mathbb{Z},\ y\neq 0, \ y^{2}\mid 4n^{6},
\]
admits no solutions. Hence the torsion subgroup of $E_n(\mathbb{Q})$ is precisely $E_n(\mathbb{Q})_{\mathrm{tors}} = \{\mathcal{O},\ (0,0),\ (n,0),\ (-n,0)\}.$
Therefore every rational point $(x,y)$  with $y\neq 0$ on $E_n$ is of infinite order.  
Consequently, a positive square-free integer $n$ is a congruent number if and only if the elliptic curve $E_{n}$ has a rational point of infinite order. The elliptic curve \eqref{Congruent curve} is referred to as the congruent number elliptic curve. By the celebrated Mordell--Weil theorem, the group $E_n(\mathbb{Q})$ is a finitely generated abelian group and therefore $E_n(\mathbb{Q}) \cong \mathbb{Z}^{\,r_n} \oplus E_n(\mathbb{Q})_{\mathrm{tors}}$. The non-negative integer $r_n$ is called the \emph{algebraic rank} of $E_n(\Q)$. 
Consequently, a positive square-free integer $n$ is a congruent number if and only if $r_n>0$.


It is well-known that the order of vanishing $R_n$ of the Hasse-Weil $L$-function $L(E_n,s)$ at $s=1$ is referred to as the \emph{analytic rank} of $E_n$. Moreover, the $L$-function $L(E_n,s)$ satisfies a functional equation that relates 
its values at $s$ and $2-s$ through a sign $\omega(E_n)=\pm 1$, known as the \emph{root number} of $E_n$. According to a weak form of the Birch–Swinnerton–Dyer conjecture, the Shafarevich–Tate group $\Sha(E_n/\mathbb{Q})$ is finite, and the analytic rank $R_n$ equals the algebraic rank $r_n$ for the elliptic curve $E_n$.
This deep conjecture connects the arithmetic of $E_n$ with the analytic properties 
of its $L$-function.  
The parity conjecture further asserts that
$(-1)^{r_n} = \omega(E_n).$ In particular, if $\omega(E_n)=-1$, then $r_n$ is positive, and hence 
$E_n(\mathbb{Q})$ contains infinitely many rational points.
 Birch and Stephens \cite{Birch-Stephens} showed that 
$$\omega(E_n)=
  \begin{cases}
     +1  &  \text{if}~~ n \equiv 1,2,3 \pmod 8, \\
     -1  &  \text{if}~~ n \equiv 5,6,7 \pmod 8.
 \end{cases} $$
If one assumes the Birch--Swinnerton-Dyer conjecture, any integer $n \equiv 5,6,7 \pmod{8}$ must be a congruent number. This reduces the problem to the cases $n \equiv 1,2,3 \pmod{8}$, which are therefore of greater interest. Tunnell \cite{tunnell1983classical} gave a characterization of congruent numbers, assuming the BSD conjecture.

Let $p$ be a prime.  
Heegner \cite{heegner1952diophantische} and Birch \cite{birch1969diophantine} proved that 
$2p$ is a congruent number whenever $p \equiv 3 \pmod{4}$.  
On the other hand, Genocchi \cite{genocchi1855note} showed that primes 
$p \equiv 3 \pmod{8}$ themselves are non-congruent.  
Using the theory of mock Heegner points, Monsky \cite{monsky1990mock} prove that 
primes $p \equiv 5,7 \pmod{8}$ are congruent numbers. Lagrange \cite{lagrange1975nombres} provided a criterion for determining non-congruent 
numbers of the form $pq$: if $\legendre{p}{q}=-1$, where 
$p \equiv 5 \pmod{8}$ and $q \equiv 7 \pmod{8}$ are primes, then the integer $pq$ is 
non-congruent.  
Qin \cite{MR4458410} and Das--Saikia \cite{das2025class} studied integers of the form $pq$ 
and proved, by different methods, that if $pq$ is a congruent number, then the class number 
of the imaginary quadratic field $\mathbb{Q}(\sqrt{-pq})$ is divisible by $8$.  

Furthermore, Lagrange \cite{lagrange1975nombres} also established that if $p,q,r$ are 
distinct primes satisfying $p \equiv q \equiv 5 \pmod{8}$ and $r \equiv 3 \pmod{8}$, 
then $pqr$ is non-congruent whenever any one of the following conditions holds:
\[
\legendre{p}{q} = \legendre{p}{r} = -1, \qquad
\legendre{q}{p} = \legendre{q}{r} = -1, \qquad
\legendre{r}{p} = \legendre{r}{q} = -1.
\]
The case of square-free integers $n \equiv 3 \pmod{8}$ of the form $n = p_1 p_2 \cdots p_t q$, where the primes $p_i\equiv 1 \pmod 8$ and $q \equiv 3 \pmod{8}$ are distinct, has been recently considered by Das and Mondal \cite{DasMondalInPrep}. In the present article, we consider positive square-free integers of the form $n = p_1 p_2 \dotsm p_t q,$
where either each prime $p_i \equiv 5 \pmod{8}$ and $q \equiv 7 \pmod{8}$ (see Theorem \ref{5557}), or each 
$p_i \equiv 5 \pmod{8}$ and $q \equiv 3 \pmod{8}$ with the additional condition that $n \equiv 3 \pmod{8}$ (see Theorem \ref{553}).  
Before stating the main results, we introduce some notations.

\begin{itemize}
    \item $C(-n)$: the ideal class group of the imaginary quadratic field $\mathbb{Q}(\sqrt{-n})$.
    
    \item $h(-n)$: the ideal class number of $\mathbb{Q}(\sqrt{-n})$.

    \item $D(-n)$: the discriminant of the quadratic field $\mathbb{Q}(\sqrt{-n})$.

    \item $G[m]$: the $m$-torsion subgroup of an abelian group $G$.

    \item $\legendre{\cdot}{\cdot}$ and $\legendre[4]{\cdot}{\cdot}$: the Legendre symbol and the quartic residue symbol, respectively.


    \item $M_k(\mathbb{F})$ and $M_{k \times \ell}(\mathbb{F})$: the sets of all $k \times k$ and $k \times \ell$ matrices over a field $\mathbb{F}$, respectively.

    \item Let $I_t$ denote the $t \times t$ identity matrix and $O_t$ denote the $t \times t$ zero matrix.


    \item $\F_2:$ denotes the finite field with two elements.

    \item For distinct primes $p_1, p_2, \ldots, p_t$, set $P := p_1 p_2 \cdots p_t$.  
    Define the $t \times t$ matrix $A_P = [a_{ij}]$ by
    \begin{equation}\label{matrix A_n 5557}
        a_{ij} =
        \begin{cases}
            1, & \text{if } \legendre{p_j}{p_i} = -1 \text{ and } i \neq j,\\[4pt]
            0, & \text{if } \legendre{p_j}{p_i} = 1 \text{ and } i \neq j,
        \end{cases}
        \qquad\text{and}\qquad
        a_{ii} = \sum_{j \ne i} a_{ij}.
    \end{equation}
    In the case $t = 1$, we set $A_P$ to be the $1 \times 1$ zero matrix, $A_P = [0]$.
   \item For $r\ge 1$, let $\mathbf{1}_r := (1,\ldots,1)^{\mathsf T} \in \mathbb{F}_2^{\,r}$ and
$\mathbf{0}_r := (0,\ldots,0)^{\mathsf T} \in \mathbb{F}_2^{\,r}$.
\end{itemize}



Our first main result is as follows:
\begin{theorem}\label{5557}
    Let $p_{1},\dotsc,p_{t}$ and $q$ be distinct primes such that
$p_{i} \equiv 5 \pmod{8}$ for all $1 \le i \le t$, $q \equiv 7 \pmod{8}$,
and $t$ is odd. Set $n = p_{1}p_{2}\cdots p_{t} q=Pq$. Assume further that:
\begin{enumerate}
    \item[(a)] $\rank_{\mathbb{F}_{2}}(A_{P}) = t - 1$, where $A_{P}$ is a $t \times t$ matrix defined as in \eqref{matrix A_n 5557};
    \item[(b)] $\displaystyle \legendre{p_i}{q} = 1$ for all $i \in \{1,2,\dotsc,t\}$.
\end{enumerate}
If $n$ is a congruent number, then
\begin{equation*}
\label{5557-equation}
  h(-n) \equiv 0 \pmod{2^{\,t+2}}.  
\end{equation*}
\end{theorem}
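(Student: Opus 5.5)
We argue by contrapositive: if $h(-n)\not\equiv 0 \pmod{2^{t+2}}$, then $r_n=0$. The plan has three steps: bound $r_n$ by the $2$-Selmer rank, express that rank through a Monsky matrix over $\F_2$, and compare it with $2$-power invariants of $C(-n)$ via Rédei matrices built from the same Legendre symbols.

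First, recall $r_n \le s_2(n)$, where $s_2(n) := \dim_{\F_2}\Sel_2(E_n/\Q) - 2$ (the $2$ accounts for the full rational $2$-torsion). Monsky's description writes $s_2(n)$ as $2(t+1) - \rank_{\F_2} M_n$. Here $M_n$ is the $2(t+1)\times 2(t+1)$ block matrix whose blocks are built from the Legendre-symbol matrix of the primes $p_1,\dots,p_t,q$, together with diagonal matrices encoding $\legendre{2}{\cdot}$ and $\legendre{-1}{\cdot}$.

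Next, we specialise to our congruence classes. For $p_i\equiv 5 \pmod 8$ we have $\legendre{2}{p_i}=-1$ and $\legendre{-1}{p_i}=1$; for $q \equiv 7 \pmod 8$ we have $\legendre{2}{q}=1$ and $\legendre{-1}{q}=-1$. Quadratic reciprocity gives $\legendre{p_i}{p_j}=\legendre{p_j}{p_i}$ and $\legendre{q}{p_i}=\legendre{p_i}{q}$. By hypothesis (b) every entry involving $q$ vanishes, so the Legendre matrix of $n$ is $\mathrm{diag}(A_P,0)$, with a trivial row and column for $q$. Substituting into $M_n$, we aim to reduce by row and column operations to blocks such as $A_P$, $A_P+I_t$ and the vectors $\mathbf{1}_t$. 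Using hypothesis (a) together with $A_P\mathbf 1_t=0$ (so that $\ker A_P=\langle \mathbf 1_t\rangle$), and $t$ odd, we expect to obtain $\rank M_n = 2t$. This gives $s_2(n)=2$, matching the root number $-1$ for $n\equiv 7\pmod 8$. So the $2$-Selmer bound alone cannot force $r_n=0$.

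We therefore need finer information. The plan is to use the $2$-descent via the two $2$-isogenies, or the Cassels--Tate pairing / $4$-Selmer refinement, showing that the $\Sha[2]$ part is controlled by $4$-rank and $8$-rank data of $C(-n)$. We would first try the approach of Qin / Das--Saikia for $pq$: the relevant Selmer elements correspond to the ideal classes of the factors $d \mid 2n$ in $C(-n)$. The $4$-rank comes from the Rédei matrix of $D(-n)=-4n$, which is built from the same Legendre data. Under (a) and (b), we expect the $2$-rank of $C(-n)$ to be $t+1$ by genus theory, and the $4$-rank to be exactly $1$ by hypothesis (a). That leaves only a single cyclic factor of order $\geq 4$, and the remaining part of $C(-n)[2^\infty]$ has size $2^{t}$. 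A rational point of infinite order would lift the Selmer elements to $4$-Selmer elements. Via Rédei--Reichardt this corresponds to the distinguished class being an $8$th power in the class group, or equivalently to a quartic residue symbol condition. That in turn gives $8 \mid$ the cyclic factor, hence $2^{t+2}\mid h(-n)$.

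The main obstacle is the last step: identifying precisely which quartic-symbol or Rédei-type condition is forced by $r_n>0$, and why it equals the $8$-rank condition on $C(-n)$. I would first handle it by an explicit descent: write the homogeneous space $d u^4 + \cdots$ for the nontrivial Selmer class and check local solvability at $2$, $q$ and the $p_i$. Then I would compare the result with the Rédei--Reichardt criterion for $8$-rank in terms of the symbols $\legendre[4]{\cdot}{\cdot}$.
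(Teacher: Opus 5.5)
There is a genuine gap: the step that actually proves the theorem is the one you leave as ``the main obstacle.'' Your Selmer computation ($s_n=2$, via $\ker A_P=\langle\mathbf 1_t\rangle$ and $t$ odd) is right and matches the paper. You also correctly see that it cannot conclude. After that, though, the argument is only a sketch (``via R\'edei--Reichardt this corresponds to \dots a quartic residue symbol condition''), and neither the symbol nor the mechanism is identified. Two concrete ingredients are missing.

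\emph{The $8$-rank criterion.} By (b), $d=q$ lies in the kernel of the R\'edei matrix $\mathrm{diag}(A_P,0)$. Waterhouse's criterion then reduces $r_8(-n)=1$ to $\legendre{y}{P}=1$ for a primitive solution of $4y^2=qx^2+Pz^2$. Reducing this equation mod $P$ and mod $x$, and using $\legendre{2}{P}=-1$ (since $t$ is odd), gives $\legendre[4]{q}{P}=-\legendre{y}{P}$. Hence $r_8(-n)=1\iff\legendre[4]{q}{P}=-1$.

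\emph{The descent side.} You need the explicit set $Z=\{(1,1),(P,P),(q,1),(n,P)\}$ of everywhere locally solvable systems. Since $r_n\ge 1$ only guarantees an integral solution for \emph{some} nontrivial pair, you must show that each of the three nontrivial pairs forces $\legendre[4]{q}{P}=-1$. The paper does this directly from the integer solutions: reduction mod $P$, $2$-adic analysis (e.g.\ $y\equiv 2\pmod{4}$ and $z\equiv\pm1\pmod{8}$ via $8qy'^2=(z+w)(z-w)$ in the $(P,P)$ case), and Jacobi reciprocity on the auxiliary variables. No $4$-Selmer or Cassels--Tate input is needed.

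Your $4$-covering route could in principle replace this. For instance, you could show that the Cassels--Tate pairing on the $2$-dimensional quotient $\Sel_2(E_n/\Q)/\mathrm{im}\,E_n(\Q)[2]$ is nondegenerate whenever $\legendre[4]{q}{P}=1$. But only classes in the image of $E_n(\Q)$ are guaranteed to lift to $\Sel_4$, so working with ``the nontrivial Selmer class'' is not enough. As written, none of this computation is carried out.

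There are also errors in your set-up that would break the final count.
\begin{itemize}
\item Since $t$ is odd, $P\equiv 5\pmod{8}$ and $n=Pq\equiv 3\pmod{8}$, not $7$. So the root number is $+1$, not $-1$.
\item Consequently $D(-n)=-n$, not $-4n$. It has $t+1$ prime divisors, so genus theory gives $r_2(-n)=t$, not $t+1$.
\item With $r_2(-n)=t$ and $r_4(-n)=1$, the $2$-part of $C(-n)$ is $(\Z/2\Z)^{t-1}\oplus\Z/2^k\Z$ with $k\ge 2$. The divisibility $2^{t+2}\mid h(-n)$ is exactly $k\ge 3$, i.e.\ $r_8(-n)=1$. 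Your version (an elementary part of size $2^t$ plus a cyclic factor divisible by $8$) would predict $2^{t+3}\mid h(-n)$, which is false in general.
\item The condition $r_8(-n)=1$ says that the unique nontrivial ambiguous class which is a square is a \emph{fourth} power, not an eighth power.
\end{itemize}
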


\begin{theorem}\label{553}
Let $p_{1},\dotsc,p_{t}$ and $q$ be distinct primes such that
$p_{i} \equiv 5 \pmod{8}$ for all $1 \le i \le t$, $q \equiv 3 \pmod{8}$,
and $t$ is even. Set $P= p_{1}p_{2}\cdots p_{t}$ and $n = Pq$. Assume further that:
\begin{enumerate}
    \item[(a)] $\rank_{\mathbb{F}_{2}}(A_{P}) = t - 1$, where $A_{P}$ is a $t \times t$ matrix defined as in  \eqref{matrix A_n 5557};
    \item[(b)] $\displaystyle \legendre{p_i}{q} = 1$ for all $i \in \{1,2,\dotsc,t\}$;
    \item[(c)] the $2$-torsion subgroup $\Sha(E_n/\mathbb{Q})[2]$ of the Shafarevich--Tate group of $E_n$ is finite.
\end{enumerate}
If $n$ is a congruent number, then
\[ h(-n)  \equiv h(-P) + 2^{t+1} \pmod{2^{\,t+2}}. \]
\end{theorem}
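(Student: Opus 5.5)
We plan to compute the $2$-Selmer group $\Sel^{(2)}(E_n/\mathbb{Q})$ via the standard $2$-descent (Monsky's matrix), and compare it with the $4$-rank and $8$-rank of $C(-n)$ and $C(-P)$ obtained from Rédei matrices. Since $\Sha(E_n/\mathbb{Q})[2]$ is finite, the Cassels–Tate pairing on it is alternating and nondegenerate, so $\dim_{\F_2}\Sha(E_n/\mathbb{Q})[2]$ is even. Hence $s(n):=\dim \Sel^{(2)}(E_n/\mathbb{Q})-2 = r_n+\dim\Sha[2]$ satisfies $s(n)\equiv r_n \pmod 2$. Because $n\equiv 3\pmod 8$ the root number is $+1$, so the Selmer parity should be even. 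Congruence of $n$ forces $r_n\ge 1$, hence $r_n\ge 2$, and therefore $s(n)\ge 2$.

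\textbf{Step 1: the Monsky matrix.} Write the $2t+2$ (or $2(t+1)$) square Monsky matrix for $n=Pq$ with the primes $p_1,\dots,p_t,q$. Using $p_i\equiv 5\pmod 8$, $q\equiv 3\pmod 8$, $\legendre{p_i}{q}=1$ and quadratic reciprocity (so $\legendre{q}{p_i}=1$), the blocks involving $q$ become explicit. The $p_i$–$p_j$ blocks are exactly $A_P$, and the diagonal terms from $\legendre{2}{p_i}=-1$ and $\legendre{-1}{p_i}=1$ give $I_t$ entries. Using hypothesis (a), $\rank A_P=t-1$ with kernel spanned by $\mathbf 1_t$ (each row of $A_P$ sums to zero), we row-reduce to get $s(n)=2$ exactly. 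Thus the congruence hypothesis pins down $\dim\Sel^{(2)}=4$.

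\textbf{Step 2: class group $2$-ranks and $4$-ranks.} By genus theory, $\dim C(-n)[2]=t$ (there are $t+1$ odd primes and $D=-n$) and $\dim C(-P)[2]=t$ (discriminant $-4P$, with $t+1$ ramified primes). For the $4$-ranks we use Rédei matrices. For $-n$, the Rédei matrix is essentially $A_P$ bordered by a row and column for $q$. Hypothesis (b) makes that border zero apart from contributions from $\legendre{-1}{\cdot}$ and $\legendre{2}{\cdot}$, so the $4$-rank equals $t+1-\rank$, which we compute to be $t$. Then $C(-n)[2^\infty]$ has all $t$ cyclic factors of order $\ge 4$, giving $2^{2t}\mid h(-n)$ — or at least $2^{t+1}$ after the appropriate count. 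Similarly for $-4P$, the rank-$(t-1)$ condition gives $4$-rank $1$, so $h(-P)\equiv 2^{t}\cdot(\text{odd})\cdot 2^{\,\ge1}$, i.e. $h(-P)\equiv 2^{t+1}\pmod{2^{t+2}}$ precisely when the $8$-rank of $C(-P)$ is $0$.

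\textbf{Step 3 (main obstacle): linking $s(n)=2$ with $8$-ranks.} Following Das–Saikia and Qin, we relate the Selmer elements to Rédei symbols and quartic residue symbols $\legendre[4]{\cdot}{\cdot}$. The $8$-rank of $C(-n)$ and of $C(-4P)$ are each governed by a $1$-dimensional Rédei-type symbol built from the kernel vector $\mathbf 1_t$. We express the second $2$-descent (the Cassels–Tate pairing on the $4$-Selmer level) through the same quartic symbols. We expect the conclusion to be that $s(n)=2$ with $r_n=2$ forces these two $8$-rank indicators to differ. This yields $v_2(h(-n))$ and $v_2(h(-P))$ with exactly one of them equal to $t+1$ and the other $\ge t+2$, which is equivalent to $h(-n)\equiv h(-P)+2^{t+1}\pmod{2^{t+2}}$.

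The delicate part is the explicit identification of the Cassels–Tate/quartic symbol with the product of the two Rédei $8$-rank symbols. We will attempt it via product formulas for Hilbert symbols over $\mathbb{Q}(\sqrt{-1})$ and $\mathbb{Q}(\sqrt{q})$.
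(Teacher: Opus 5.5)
Your Step~3 is the whole theorem, and there you only state an expectation. Nothing in Steps~1--2 actually links the Selmer group to the $8$-ranks. Three concrete ingredients are missing.

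\begin{enumerate}
\item[(i)] \emph{Explicit $8$-rank criteria.} One needs $r_8(-n)=1\iff\legendre[4]{q}{P}=1$ (Lemma~\ref{8-rank of 553}, from a Rédei solution of $4y^2=qx^2+Pz^2$ together with $\legendre{2}{P}=1$), and the Jung--Yue criterion for $r_8(-P)$ (Lemma~\ref{8 rank of n_q}).
\item[(ii)] \emph{The right vector.} The nontrivial $4$-rank class of $C(-P)$ is not attached to the kernel vector $\mathbf{1}_t$, which only gives the trivial divisor $P$. It comes from the factorization $P=P_vP_v'$ given by a solution of $A_Pv=\mathbf{1}_t$. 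The same $v$ gives the Selmer class $(P_v,P_v')\in Z$ of Proposition~\ref{snfor553}.
\item[(iii)] \emph{How $r_n=s_n=2$ is used.} By Lemma~\ref{local sol}, the $2^{r_n}$ globally solvable systems then exhaust the $2^{s_n}$ locally solvable ones. So the system for $(P_v,P_v')$ has a nontrivial integral solution, which reduces to $x^2+qy^2=P_v'z^2$, $x^2-qy^2=P_vw^2$.
\end{enumerate}

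From that solution:
\begin{itemize}
\item Reducing modulo $P_v$, $P_v'$ and the odd part of $y$ gives $\legendre[4]{q}{P}=\legendre[4]{-1}{P_v}\legendre{x}{P}$.
\item From $2x^2=P_v'z^2+P_vw^2$ one gets $\legendre[4]{2P_v'}{P_v}\legendre[4]{2P_v}{P_v'}=\legendre{x}{P}\legendre{2}{z}\legendre{2}{w}$.
\item The same equation read modulo $16$ gives $\legendre{2}{z}\legendre{2}{w}=(-1)^{(P-1)/8}$. The Jung--Yue criterion therefore collapses to $r_8(-P)=1\iff\legendre{x}{P}=1$.
\end{itemize}

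This is the identity a Cassels--Tate computation would have to reproduce. It also shows that your expected conclusion is not right in general. The two indicators differ exactly when $\legendre[4]{-1}{P_v}=-1$, i.e.\ when $P_v$ has an odd number of prime factors. That is automatic for $t=2$, but not for $t\ge4$.

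For example, suppose $\legendre{p_1}{p_2}=\legendre{p_2}{p_3}=\legendre{p_3}{p_4}=-1$ and all other pairs are residues. Then $\rank_{\F_2}(A_P)=3$ and $v=(1,0,0,1)^{\mathsf T}$, so $P_v\equiv1\pmod 8$. The same computation then forces $r_8(-n)=r_8(-P)$, i.e.\ $h(-n)\equiv h(-P)\pmod{2^{t+2}}$, for congruent $n$ with $r_n=2$. The paper's case analysis slips exactly here: for $P_v\equiv P_v'\equiv1\pmod{16}$ it asserts $\legendre{x}{P}=1\iff\legendre[4]{q}{P}=-1$. That contradicts \eqref{4 rank of q mod P}, since $\legendre[4]{-1}{P_v}=1$ in that subcase. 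So your Step~3 cannot be completed as stated without an extra hypothesis such as $P_v\equiv5\pmod 8$.

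Step~2 is also wrong for $-n$. Since $D(-n)=-n$ is odd, no $\legendre{-1}{\cdot}$ or $\legendre{2}{\cdot}$ terms enter, and (b) gives $R(-n)=\mathrm{diag}(A_P,0)$. This has rank $t-1$, so $r_4(-n)=\dim\ker R(-n)-1=1$, not $t$. A $4$-rank $t\ge2$ would force $2^{2t}\mid h(-n)$, hence $2^{t+2}\mid h(-n)$ outright. That is contradicted, for example, by $h(-1443)=8$. Your final passage from $8$-ranks to $v_2(h(-n))$ and $v_2(h(-P))$ tacitly uses $r_4(-n)=r_4(-P)=1$ anyway.

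Finally, $\Sha(E_n/\mathbb{Q})[2]$ is always finite, and the Cassels--Tate pairing restricted to it need not be nondegenerate. What gives $s_n\equiv r_n\pmod 2$, hence $r_n=2$, is finiteness of $\Sha(E_n/\mathbb{Q})[2^\infty]$. The paper's hypothesis (c) has the same imprecision.
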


\begin{remark}
{\rm
Observe that, in contrast to Theorem~\ref{5557}, Theorem~\ref{553} requires an additional hypothesis on the finiteness of the $2$-torsion subgroup 
$\Sha(E_n/\Q)[2]$ of the Shafarevich--Tate group of $E_n$. This assumption is expected to hold for elliptic curves $E_n$ for all square-free integer $n$ in view of the Birch--Swinnerton-Dyer conjecture.

The converses of our results, however, are not true. In particular, the criteria we obtain are necessary for $n$ to be a congruent number, but they are not sufficient.

For example, consider  $n = 42090427 = 53 \cdot 61 \cdot 277 \cdot 47,$
which satisfies all the hypotheses of Theorem~\ref{5557} with $t = 3$. One verifies that  
$h(-n) = 704 \equiv 0 \pmod{2^{5}},$
as predicted, although $n$ is known to be a non-congruent number.

Similarly, take  $n = 70115 = 5 \cdot 37 \cdot 379, \; P = 5 \cdot 37,$ which satisfies all the assumptions of Theorem~\ref{553} with $t = 2$. In this case, $h(-n) = 88, \; h(-P) = 16,$
and indeed, $h(-n) \equiv h(-P)+ 8 \pmod{2^{4}},$
yet $n$ is a non-congruent number.
}
\end{remark}

\subsection{Organization}
Both Theorem~\ref{5557} and Theorem~\ref{553} are proved using a congruence relation between class numbers (see Lemmas \ref{8 rank condition}, \ref{8-rank of 553}, and \ref{8 rank of n_q}). The main idea of the proof is as follows. We study certain systems of Diophantine equations arising from the image of the Mordell--Weil group $E_n(\mathbb{Q})$ under the $2$-descent map, as explained in Section~\ref{Selmer group and Monsky}. In the same section, we also describe how to compute the $2$-Selmer rank using Monsky matrices.

In Section~\ref{4 rank and 8 rank}, we review basic facts about the $4$-rank and $8$-rank of the ideal class group of imaginary quadratic fields. The proofs of our main results are given in Section~\ref{proof of the theorem}. In Section~\ref{Quantitative estimate}, we give lower bounds for the number of non-congruent numbers satisfying the conditions of Theorem~\ref{5557}. Finally, in Section~\ref{Computation}, we present examples that support Theorems~\ref{5557} and~\ref{553} along with possible future directions for further research. 


\section{Selmer Group and Monsky Matrix}\label{Selmer group and Monsky}
Suppose $n$ is a square-free positive integer. In this section, we study the Mordell-Weil rank and the $2$-Selmer rank of the congruent number elliptic curve $E_{n}$ for a square-free integer $n$. We also discuss how the $2$-Selmer rank is connected to the rank of the Monsky matrix associated to  $n$, and investigate the solvability (local and global) of a system of equations that arise in $2$-descent of $E_{n}$.


Let $E_n/\Q$ be the congruent number elliptic curve defined by $E_n:\ y^2 = x^3 - n^2 x .$ For a place $v$ of $\Q$, let $\Q_v$ denote the corresponding completion. 
We write $\overline{\Q}$ and $\overline{\Q}_v$ for the algebraic closures of $\Q$ and $\Q_v$, respectively. 
Let $G_\Q := \Gal(\overline{\Q}/\Q)$ and  $G_{\Q_v} := \Gal(\overline{\Q}_v/\Q_v)$
denote the absolute Galois group of $\Q$ and the decomposition subgroup at the place $v$, respectively. The Tate--Shafarevich group of $E_n$ over $\Q$ is defined by
\[
\Sha(E_n/\Q)
:=
\ker\!\left(
H^1(G_\Q, E_n(\overline{\Q}))
\longrightarrow
\prod_{v} H^1\!\left(G_{\Q_v}, E_n(\overline{\Q}_v)\right)
\right),
\]
where the product is taken over all places $v$ of $\Q$.
The $2$-Selmer group of $E_n$ over $\Q$ is defined as
\[
\Sel_2(E_n/\Q)
:=
\ker\!\left(
H^1\!\left(G_\Q, E_n(\overline{\Q})[2]\right)
\longrightarrow
\prod_{v} H^1\!\left(G_{\Q_v}, E_n(\overline{\Q}_v)\right)
\right).
\]
For further details, we refer the reader to \cite[Chapter~X]{silverman2009arithmetic}. 
Let $r_n$ denote the algebraic rank of $E_n(\Q)$. 
The $2$-descent on $E_n$ yields the exact sequence
\begin{equation}\label{Descent-exact-seq}
0 \longrightarrow E_n(\Q)/2E_n(\Q)
\longrightarrow \Sel_2(E_n/\Q)
\longrightarrow \Sha(E_n/\Q)[2]
\longrightarrow 0 .
\end{equation}
Since
$E_n(\Q)[2] \cong (\Z/2\Z) \oplus (\Z/2\Z),$
it follows that
$\#\bigl(E_n(\Q)/2E_n(\Q)\bigr) = 2^{r_n+2}.$ Therefore, the $2$-Selmer group $\Sel_2(E_n/\Q)$ is a finite $2$-group. We write
$\#\Sel_2(E_n/\Q)=2^{2+s_n},$
where $s_n$ denotes the $2$-Selmer rank of $E_n$. Taking $\F_2$-dimensions in the exact sequence
\eqref{Descent-exact-seq}, we obtain
\begin{equation}\label{relation-of-selmer-sha}
s_n = r_n + \dim_{\F_2}\Sha(E_n/\Q)[2].
\end{equation}
Since $\dim_{\F_2}\Sha(E_n/\Q)[2]\ge 0$, this immediately implies
\begin{equation}\label{inequality-selmer}
0 \le r_n \le s_n .
\end{equation}

 Now if $\Sha(E_n/\Q)[2]=0$, then by Cassel-Tate pairing \cite{cassels1962_arithmetic_iv}, we have $\dim_{\F_2}\Sha(E_n/\Q)[2]$ is even. Hence \begin{equation}
\label{relation of selmer sha}
 s_n \equiv r_n \pmod 2.   
 \end{equation}
  So the $2$-Selmer rank $s_n$ and the Mordell–Weil rank $r_n$ have the same parity.


Consider the group isomorphism
\begin{equation}\label{eq:phi-def}
\phi:\{\pm 1\} \longrightarrow \{0,1\}, \quad \text{ defined by } \phi\bigl((-1)^{\epsilon}\bigr)=\epsilon,
\quad \text{for } \epsilon \in \{0,1\}.
\end{equation} 
In this context, $\{\pm 1\}$ is regarded as a multiplicative group, while $\{0,1\}$ is viewed as an additive group modulo $2$. 

Let $r$ be a positive integer, and $n = p_1 p_2 \dotsb p_r$ be a square-free positive integer with distinct odd prime factors $p_1, p_2, \dotsc, p_r$. 
For $l \in \{\pm 2\}$, we define an $r \times r$ diagonal matrix $D_l = [d_i]$, and an $r \times r$ matrix $A = [a_{ij}]$, respectively, as follows:

\begin{equation}
\label{diaij}
    d_i= \phi\left(\legendre{l}{p_i}\right), \quad \quad a_{ij}= \begin{cases}
    \phi\left(\legendre{p_j}{p_i}\right) & \text{if}~ i \neq j,\\
     \displaystyle\sum_{j: j \neq i}a_{ij} & \text{if}~ i=j.
\end{cases} 
\end{equation}
Then the Monsky matrix is given by 
\begin{equation}\label{Monsky_matrix}
    M=\begin{bmatrix}
        A+D_2 & D_2 \\
        D_2 & A+D_{-2}
    \end{bmatrix}.
\end{equation}

\begin{lemma}\cite[Appendix]{Heath-Brown94}\label{Selmer rank}
    Let $M$ be the Monsky matrix defined as in \eqref{Monsky_matrix}. Then the $2$-Selmer rank $s_n=2r - \rank_{\mathbb{F}_{2}}(M).$
\end{lemma}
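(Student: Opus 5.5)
This is Monsky's formula from the appendix of Heath-Brown's paper, and I would prove it by computing the $2$-Selmer group explicitly through complete $2$-descent and then turning the local conditions into a linear system over $\F_2$.

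\textbf{Step 1: the Kummer map.} Because $E_n[2]\subset E_n(\Q)$, we have $H^1(G_\Q,E_n[2])\cong (\Q^\times/\Q^{\times 2})^2$. The image of a point $(x,y)$ is $(x,\,x-n)$, or equivalently $(x-n,\,x+n)$. The Selmer classes are unramified outside $2n\infty$, so $\Sel_2(E_n/\Q)$ is the set of pairs $(d_1,d_2)$, with each $d_i$ in the group generated by $-1,2,p_1,\dots,p_r$, for which the quartic
\[
d_1u^2-d_2v^2=n w^2,\qquad d_1u^2+d_2 v^2 = \ldots
\]
is everywhere locally solvable. Concretely, the curve is $d_1u^2 - d_2v^2 = n w^2$ together with a companion equation; I would use the standard symmetric form $x-n=d_1\square$, $x+n=d_2\square$, $x=d_1d_2\square$.

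\textbf{Step 2: killing the sign and torsion.} The real place forces the relevant $d_i$ to be positive. The four torsion points account for the ``$2$'' in $2^{2+s_n}$. After quotienting by the torsion images $(n,\,\ldots)$ and similar classes, the remaining pairs are parametrized by vectors $(u,w)\in\F_2^{r}\times\F_2^{r}$. Here $u_i,w_i$ record whether $p_i$ divides the respective odd parts. Following Monsky, one of the two parameters also carries the power of $2$.

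\textbf{Step 3: local conditions at $p_i$.} At each odd $p_i$, local solvability of the descent quartic is equivalent, via Hensel's lemma and Legendre symbol calculations, to two linear equations in $(u,w)$. The off-diagonal Legendre symbols $\legendre{p_j}{p_i}$ contribute the entries $a_{ij}$, and the diagonal terms come from $\legendre{-1}{p_i}$ and $\legendre{\pm 2}{p_i}$ once the symbols are rewritten using $n$. This produces the rows of $\begin{bmatrix}A+D_2 & D_2\end{bmatrix}$ and $\begin{bmatrix}D_2 & A+D_{-2}\end{bmatrix}$. The choice of $a_{ii}=\sum_{j\ne i}a_{ij}$ is exactly what absorbs the contribution $\legendre{n/p_i}{p_i}$.

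\textbf{Step 4: the places $2$ and $\infty$.} I would show that local solvability at $2$ and at $\infty$ is automatic once all the conditions at the odd $p_i$ hold, arguing via the product formula (Hilbert reciprocity) for the relevant Hilbert symbols. It follows that $\Sel_2$ modulo torsion is isomorphic to $\ker M$, so $s_n = 2r-\rank M$.

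\textbf{Main obstacle.} The delicate part is Step 3 together with the bookkeeping at $2$. One has to choose the parametrization of $(d_1,d_2)$ so that all conditions come out linear and the $2$-adic condition becomes redundant. I would follow Monsky's normalization, writing $d_1=2^{\epsilon}\prod p_i^{u_i}$ and $d_2=\prod p_i^{w_i}$ up to sign, and then check each of the resulting Legendre symbol identities case by case.
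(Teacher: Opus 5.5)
Your overall architecture is the right one, and Step~3 is correct. The paper does not reprove this formula; it quotes it from Monsky's appendix. Inside the paper it amounts to Lemma~\ref{2.4}: Heath-Brown's Lemma~\ref{local sol} identifies $\Sel_2(E_n/\Q)$ modulo the torsion image with the everywhere locally solvable pairs $(u,u')\in G(n)\times G(n)$ of positive \emph{odd} divisors, and the conditions at $p\mid n$ are then linearized via $\phi_p$, exactly as in your Step~3.

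The gap is in Steps~2 and~4, which is precisely the reduction to $G(n)\times G(n)$ where the places $2$ and $\infty$ are used. In your normalization, $x-n\in d_1\Q^{\times2}$ and $x+n\in d_2\Q^{\times2}$, with $d_2$ made odd using the image $(2,2n)$ of $(n,0)$, and $d_1=2^{\varepsilon}\prod p_i^{u_i}$ with $\varepsilon$ free. Here the $2$-adic condition is \emph{not} redundant. Every point of $E_n(\Q_2)$, including the $2$-torsion, has $v_2(x-n)\equiv v_2(x+n)\pmod 2$: both are odd if $x\in\Z_2^\times$ and both are even otherwise. So local solvability at $2$ forces $\varepsilon=0$. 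No product-formula argument can recover this, because the odd places need not see $\varepsilon$ at all.

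For instance, take $n=7$. Since $\legendre{2}{7}=1$, $2$ is a square in $\Q_7$, so the classes with $\varepsilon=1$ pass the $7$-adic, real and good-prime conditions exactly when those with $\varepsilon=0$ do. Your count would give $4$ classes modulo torsion, i.e.\ $s_7=2$. But $M=\mathrm{diag}(0,1)$ gives the correct value $s_7=2-1=1$. The same doubling occurs whenever all $p_i\equiv\pm1\pmod 8$. Moreover, a free $\varepsilon$ puts your unknowns in $\F_2^{2r+1}$, which contradicts your own parametrization by $\F_2^r\times\F_2^r$.

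The repair has two parts.
\begin{enumerate}
\item Use the $2$-adic valuation condition, together with the torsion image of $(n,0)$, to eliminate the power of $2$. Use the real condition, together with the image of $(0,0)$, to eliminate signs. Then Selmer classes modulo torsion are represented by pairs of positive odd divisors.
\item Only the remaining unit part of the $2$-adic condition is automatic, and it does follow from reciprocity, as you suggest. Let $c$ be such a class satisfying the conditions at all $p\mid n$. It is trivial at $\infty$ because it is positive. It also satisfies the conditions at $p\nmid 2n$ automatically, since unramified classes lie in the local image there; you should state this explicitly. The local images are isotropic for $\langle(a_1,a_2),(b_1,b_2)\rangle_v=(a_1,b_2)_v(a_2,b_1)_v$, so the product formula gives $\langle c,\tau\rangle_2=1$ for both global torsion classes $\tau$. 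On $\Z_2^\times\times\Z_2^\times$ classes, this orthogonality cuts out exactly the unit part of the $2$-adic image; both have dimension $2$.
\end{enumerate}
With this in place, your Step~3 gives $Z\cong\ker M$, and hence $s_n=2r-\rank_{\F_2}(M)$.
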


Let $n = p_{1} \cdots p_{r}$ be square-free, where $p_{1},\dots,p_{r}$ are distinct primes. Consider the set $G(n)$ of all positive divisors of $n$ i.e. $G(n) =\{a: ~a> 0, ~a\mid n\}.$ A straightforward calculation yields that $G(n)$ has a group structure under the following binary operation
\begin{equation}\label{thegroupG}
u*u' := \frac{uu'}{(\gcd(u,u'))^2}, \quad  u, u^\prime \in G(n).
\end{equation}
Every $u \in G(n)$ can be written uniquely in the form
$u = \prod_{i=1}^{r} p_{i}^{\varepsilon_{i}(u)},$
where each exponent $\varepsilon_{i}(u) \in \{0,1\}$. Define a map 
\begin{align}
\label{varphi-def} 
\varphi : G(n) \longrightarrow \F_2^{r}  ~\text{ by }~  \varphi(u)= \big(\varepsilon_{1}(u), \dots, \varepsilon_{r}(u)\big)^{\mathsf T},  
\end{align}
 where $\varepsilon_{i}(u)=1$ if $p_{i}\mid u$ and  $\varepsilon_{i}(u)=0$  otherwise. One checks easily that $\varphi$ is a group isomorphism. Moreover for $u,u' \in G(n)$, this induces the isomorphism
\begin{align}
\label{GXG}
\Psi : G(n) \times G(n) &\longrightarrow \F_2^{2r} \\ 
(u,u') &\longmapsto 
       \big(\varepsilon_{1}(u),\dots,\varepsilon_{r}(u),\varepsilon_{1}(u'),\dots,\varepsilon_{r}(u')\big)^{\mathsf T} \nonumber.
\end{align}

Consider the following system of equations naturally coming from the $2$-descent method on the congruent number elliptic curve $E_n.$ For further details, the reader is referred to \cite[Appendix]{Heath-Brown94}.
    \begin{equation}\label{equation}
    \begin{aligned}
    uu'x^2 + ny^2 &= uz^2,\\
    uu'x^2 - ny^2 &= u'w^2,
    \end{aligned}
    \qquad (u,u^\prime) \in G(n)\times G(n).
    \end{equation}
The algebraic rank and the $2$-Selmer rank of the elliptic curve $E_{n}$ are determined by the number of pairs $(u,u') \in G(n)\times G(n)$ for which the system of equations \eqref{equation} admits nontrivial global and local solutions, respectively. The following lemma, due to Heath-Brown \cite{Heath-Brown93}, gives an explicit count of the global (integral) solutions as well as the local solutions at all places.

\begin{lemma}\cite[Lemma 1]{Heath-Brown93}\label{local sol}
        Let $n$ be a square-free positive integer. Then there are exactly $2^{r_n}$ systems \eqref{equation} with non-trivial integer solutions. Moreover, there are $2^{s_n}$ systems in \eqref{equation} which are everywhere locally solvable. \qed
    \end{lemma}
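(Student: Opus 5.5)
This lemma is cited from Heath-Brown, so the plan is to reconstruct the standard complete $2$-descent argument that underlies it.

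\textbf{Step 1: the descent map.} Write $E_n: y^2=(x)(x-n)(x+n)$ with $2$-torsion points $(0,0),(n,0),(-n,0)$. I will use the injective homomorphism
\[
E_n(\Q)/2E_n(\Q)\longrightarrow (\Q^\times/\Q^{\times 2})^2,\qquad (x,y)\mapsto (x-n,\ x+n)\ \text{mod squares},
\]
with the usual modification at the $2$-torsion points, where the offending coordinate is replaced using the product relation. The image is classically contained in classes represented by square-free divisors of $2n$, possibly with signs. The point of the normalisation in \eqref{equation}, which takes $u,u'\in G(n)$ positive divisors of $n$, is that a suitable twist absorbs the sign and the factor $2$. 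To see this, write a point as $x-n=\pm u z^2\cdot(\ldots)$, $x+n=\ldots$, and clear denominators by setting $x=n\,\cdot$(square ratio) in Heath-Brown's parametrisation. This yields exactly the pair of quadrics
\[
uu'x^2+ny^2=uz^2,\qquad uu'x^2-ny^2=u'w^2.
\]

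\textbf{Step 2: the bijection with $E_n(\Q)/2E_n(\Q)$.} Next I check that $(u,u')$ lies in the image of the descent map if and only if the corresponding system has a nontrivial integer solution. There are two directions:
\begin{enumerate}
\item[(i)] Given a rational point, the $x$-coordinate produces a solution by construction.
\item[(ii)] Given a nontrivial solution $(x,y,z,w)$, the intersection of the two quadrics is a torsor, a $2$-covering of $E_n$, and a rational point on it maps to a point of $E_n(\Q)$ whose descent image is $(u,u')$.
\end{enumerate}
The $2$-torsion points correspond to the degenerate choices (for instance $y=0$ or $x=0$ type solutions with specific $u,u'$), and I must verify that these are still counted as nontrivial. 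Since the map is an injective homomorphism, the number of such $(u,u')$ equals $\#E_n(\Q)/2E_n(\Q)/\#(\text{normalisation})$. The normalisation identifies $2^{r_n+2}$ classes in pairs modulo the action of the sign and $2$-twist, giving exactly $2^{r_n}$. The careful bookkeeping of this factor-of-$4$ quotient is the main obstacle. I would handle it by showing that $G(n)\times G(n)$ indexes the cosets of $E_n(\Q)[2]$'s image $\{1,-1,2,-2\}$-type classes, equivalently that every Selmer class has a unique representative with $u,u'>0$ dividing $n$ after multiplying by the image of $E_n(\Q)[2]$.

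\textbf{Step 3: the local count.} The same correspondence, run over each $\Q_v$, identifies the pairs $(u,u')$ whose system is solvable in every $\Q_v$ with $\Sel_2(E_n/\Q)$ modulo that same $4$-element subgroup. Since $\#\Sel_2(E_n/\Q)=2^{2+s_n}$, this gives $2^{s_n}$ everywhere locally solvable systems. I would verify that the same quotient by the image of the torsion is compatible locally, so that the $2+s_n$ versus $s_n$ shift matches the global $2+r_n$ versus $r_n$ shift.
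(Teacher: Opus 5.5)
Your route (complete $2$-descent, then quotienting by the image of the rational $2$-torsion) is the standard argument behind Heath-Brown's Lemma~1, which the paper only cites. However, the step you yourself flag as the main obstacle is left unproved, and multiplying by the torsion image cannot settle it on its own.

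Write $\delta(X,Y)=(X,\,X-n,\,X+n)$ modulo squares. The correct dictionary (garbled in your Step~1) is $X=uu'(x/y)^2$, $X+n=u(z/y)^2$, $X-n=u'(w/y)^2$. So the system for $(u,u')$ corresponds exactly to the class $(uu',u',u)$. Let $S$ be the group of these classes, of order $4^{r}$ where $r$ is the number of primes dividing $n$. Let $T=\delta(E_n(\Q)[2])=\{(1,1,1),(-1,-n,n),(n,2,2n),(-n,-2n,2)\}$.

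A priori the classes are supported on $-1,2,p_1,\dots,p_r$ with square product. These form a group of order $2^{2r+4}$, so there are $2^{2r+2}$ cosets of $T$, four times $\#S$. Signs and powers of $2$ in two coordinates contribute a factor $16$, and $T$ absorbs only $4$ of it; it does not ``identify classes in pairs''. For odd $n$, the coset $\{(2,1,2),(-2,-n,2n),(2n,2,n),(-2n,-2n,1)\}$ contains no element of $S$. You must prove that such classes never lie in the image or in $\Sel_2(E_n/\Q)$, and multiplying by $T$ cannot do this. Relatedly, in this normalisation the $2$-torsion produces no other pairs $(u,u')$: all of $T$ is represented by $(1,1)$, via the solution $(1,0,1,1)$.

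The missing ingredient is the local image at $\infty$ and at $2$:
\begin{itemize}
\item At $\infty$: real points have $X\ge -n$, so the third coordinate is positive.
\item At $2$, for odd $n$: every point of $E_n(\Q_2)$ has $v_2(X-n)\equiv v_2(X+n)\pmod 2$.
\begin{itemize}
\item If $v_2(X)<0$, then $v_2(X)$ is even (as $3v_2(X)=2v_2(Y)$) and $v_2(X\pm n)=v_2(X)$.
\item If $v_2(X)>0$, both valuations are $0$.
\item If $X$ is a unit, one of $X\pm n$ has valuation exactly $1$ and the two valuations have even sum, so both are odd.
\end{itemize}
\item The $2$-torsion points satisfy both conditions by inspection.
\end{itemize}

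These conditions cut out a subgroup $W$ of order $2^{2r+2}$, and $\Sel_2(E_n/\Q)\subseteq W$. Since $S\cap T=1$ for odd $n$, $W=S\oplus T$. As $T\subseteq\delta(E_n(\Q))\subseteq\Sel_2(E_n/\Q)$, the modular law gives $\Sel_2(E_n/\Q)=(\Sel_2(E_n/\Q)\cap S)\oplus T$, and likewise for $\delta(E_n(\Q))$. This yields $2^{s_n}$ and $2^{r_n}$, once you add the standard fact that the smooth intersection of quadrics attached to $(u,u')$ has a $\Q_v$-point iff $(uu',u',u)\in\delta_v(E_n(\Q_v))$.

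Finally, oddness of $n$ is essential for the uniqueness of representatives you assert. For even $n$, $(n,2,2n)\equiv(n,2,n/2)\in S\cap T$, and both counts double. For $n=2$, the systems $(1,1)$ and $(1,2)$ are solved by $(1,0,1,1)$ and $(1,1,2,0)$, yet $r_2=0$. So the lemma must be read for odd $n$, which is all the paper uses.
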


 For each prime factor $p$ of $n$, the conditions for solvability of the system \eqref{equation} in $\Q_p$ are as follows:

 \begin{itemize}
    \item $\legendre{u}{p}=\legendre{u'}{p}=1$ for $p \nmid u$, $p\nmid u'.$
    \item $\legendre{2n/u}{p}=\legendre{2u'}{p}=1$ for $p \mid u,$ $p\nmid u'.$
    \item $\legendre{2u}{p}=\legendre{-2n/u'}{p}=1$ for $p \nmid u,$ $p\mid u'.$
    \item $\legendre{n/u}{p}=\legendre{-n/u'}{p}=1$ for $p \mid u,$ $p\mid u'.$
\end{itemize}

One can define a homomorphism $\phi_p : G(n) \times G(n) \to \{\pm 1\} \times \{\pm 1\}$ by
\begin{equation}\label{phi_p_def}
\phi_p(u,u') = 
\begin{cases}
\left(\legendre{u}{p},\,\legendre{u'}{p} \right), & p \nmid uu',\\[4pt]
\left(\legendre{2}{p},\,\legendre{-2}{p}\right), & (u,u')=(1,n),\\[4pt]
\left(\legendre{2}{p},\,\legendre{2}{p}\right), & (u,u')=(n,1).
\end{cases}
\end{equation}
Thus the system \eqref{equation} is solvable in $\mathbb{Q}_p$ precisely when $(u,u')$ lies in the kernel of $\phi_p$. The following lemma tells us about the cardinality of the kernel of $\phi_p$.
\begin{lemma}\label{2.4} \cite[Appendix]{Heath-Brown94}
 Let $n$ be a square-free integer and set $Z := \bigcap_{p \mid n} \ker(\phi_p),$ where $\phi_p$ is defined in \eqref{phi_p_def}. Then $(u,u') \in Z$ if and only if $\Psi(u,u')$ lies in the null space of the Monsky matrix $M$, where $M$ is given in \eqref{Monsky_matrix} and $\Psi$ is defined in \eqref{GXG}.  
Moreover, $\# Z = 2^{s_n}.$

\end{lemma}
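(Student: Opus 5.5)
The plan is to make the homomorphism $\phi_p$ explicit on a basis of $G(n)\times G(n)$, translate each condition $\phi_{p_i}(u,u')=(1,1)$ into two linear equations over $\F_2$ in the exponent vector $\Psi(u,u')$, and check that the resulting $2r\times 2r$ coefficient matrix is exactly the Monsky matrix $M$ of \eqref{Monsky_matrix}. The cardinality statement then follows from linear algebra together with Lemma~\ref{Selmer rank}.

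\emph{Step 1: $\phi_p$ on a basis.} Fix $p=p_i$. The group $G(n)\times G(n)$ is an $\F_2$-vector space with basis $(p_j,1)$ and $(1,p_j)$ for $1\le j\le r$. For $j\neq i$ the first case of \eqref{phi_p_def} gives
\[
\phi_{p_i}(p_j,1)=\Bigl(\legendre{p_j}{p_i},1\Bigr), \qquad \phi_{p_i}(1,p_j)=\Bigl(1,\legendre{p_j}{p_i}\Bigr).
\]
For the generators involving $p_i$ itself, write them as products of the special elements of \eqref{phi_p_def} and elements prime to $p_i$:
\[
(p_i,1)=(n,1)*(n/p_i,1), \qquad (1,p_i)=(1,n)*(1,n/p_i).
\]
Applying the homomorphism property then gives
\[
\phi_{p_i}(p_i,1)=\Bigl(\legendre{2}{p_i}\legendre{n/p_i}{p_i},\ \legendre{2}{p_i}\Bigr),
\qquad
\phi_{p_i}(1,p_i)=\Bigl(\legendre{2}{p_i},\ \legendre{-2}{p_i}\legendre{n/p_i}{p_i}\Bigr).
\]

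\emph{Step 2: read off row $i$ of $M$.} Note that $\phi\bigl(\legendre{n/p_i}{p_i}\bigr)=\sum_{j\ne i}a_{ij}=a_{ii}$. Apply $\phi$ coordinatewise and write $\varepsilon_j=\varepsilon_j(u)$, $\varepsilon'_j=\varepsilon_j(u')$. The first coordinate of $\phi_{p_i}(u,u')$ becomes
\[
\sum_{j\neq i}a_{ij}\varepsilon_j+(a_{ii}+d_i^{(2)})\varepsilon_i+d_i^{(2)}\varepsilon'_i ,
\]
which is row $i$ of $[\,A+D_2\ \ D_2\,]$ applied to $\Psi(u,u')$. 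The second coordinate becomes
\[
d_i^{(2)}\varepsilon_i+\sum_{j\neq i}a_{ij}\varepsilon'_j+(a_{ii}+d_i^{(-2)})\varepsilon'_i ,
\]
which is row $i$ of $[\,D_2\ \ A+D_{-2}\,]$. Letting $i$ run over $1,\dots,r$ gives $\phi(\phi_{p_i}(u,u'))_{i}=M\Psi(u,u')$ in the appropriate rows. Hence $(u,u')\in Z$ if and only if $M\Psi(u,u')=0$.

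\emph{Step 3: counting.} Since $\Psi$ is an isomorphism onto $\F_2^{2r}$, we get $\#Z=\#\ker M=2^{2r-\rank_{\F_2}M}$. By Lemma~\ref{Selmer rank} this equals $2^{s_n}$.

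The main obstacle lies before Step 1: one must justify that $\phi_p$ is a well-defined homomorphism determined by the three listed cases, and that its kernel coincides with the four local solvability conditions at $p$. I would handle this by expanding the formula from Step 1 for each of the four cases ($p$ dividing $u$, $u'$, both, or neither) and comparing with the list; for instance, when $p\mid u$ and $p\nmid u'$ one obtains $\legendre{2n/u}{p}$ and $\legendre{2u'}{p}$, as required. To identify $Z$ with the set of everywhere locally solvable systems, one also needs that local solvability at $2$, at $\infty$, and at primes $p\nmid 2n$ is automatic once the conditions at all $p\mid n$ hold. For this I would cite Heath-Brown's appendix rather than reprove it, since this is what makes $\#Z$ equal to the count $2^{s_n}$ of Lemma~\ref{local sol}.
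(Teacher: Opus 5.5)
Your proof is correct. The paper gives no argument of its own, only the citation to Heath-Brown's appendix, and your Steps 1--2 supply the computation that citation stands in for. Because $G(n)\times G(n)$ is the direct sum of the subgroup of pairs prime to $p$ and $\langle (n,1),(1,n)\rangle$, the three cases of \eqref{phi_p_def} determine $\phi_p$ uniquely, which settles the well-definedness you flag. The decompositions $(p_i,1)=(n,1)*(n/p_i,1)$ and $(1,p_i)=(1,n)*(1,n/p_i)$, together with $\phi\bigl(\legendre{n/p_i}{p_i}\bigr)=a_{ii}$, then give exactly rows $i$ and $r+i$ of $M$.

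Where you genuinely differ from the source is Step 3. You obtain $\#Z=2^{s_n}$ from Lemma~\ref{Selmer rank}, which is legitimate here because the paper quotes that lemma beforehand. In Heath-Brown's appendix the logic runs the other way: one shows that $Z$ is precisely the set of everywhere locally solvable pairs, with solvability at $\infty$, at $2$ and at $p\nmid 2n$ following from the conditions at $p\mid n$. One then gets $\#Z=2^{s_n}$ from Lemma~\ref{local sol}, and only afterwards reads off $s_n=2r-\rank_{\F_2}(M)$. So your route is shorter, but it makes the counting statement a restatement of Lemma~\ref{Selmer rank} rather than an independent fact.

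Conversely, your final paragraph is not needed for the lemma as stated, since there $Z$ is defined via $\phi_p$. Still, the identification of $\ker\phi_p$ with the local conditions at $p$ is exactly what the paper relies on later, to place the pair of a global solution inside $Z$.
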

We first establish Propositions \ref{snfor5557} and \ref{snfor553}, which play a crucial role in the proof of Theorems \ref{5557} and \ref{553}. Proposition \ref{snfor5557} determines the $2$-Selmer rank of the congruent number elliptic curve $E_n$ for $n$ as in Theorem \ref{5557}, while Proposition \ref{snfor553} provides an analogous result for $E_n$ with $n$ as in Theorem \ref{553}. We first recall the \textit{Guttman rank additivity formula}, relating the rank of a matrix to the ranks of its blocks.

\begin{proposition}\cite[Page 14]{zhang} \label{rank of bdm}
Let  { \scriptsize $C=\begin{bmatrix}
C_1 & C_2 \\
C_3 & C_4
\end{bmatrix}$}
be a block matrix over a field $\mathbb{F}$, where 
$C_1\in M_k(\mathbb{F})$, 
$C_2\in M_{k\times l}(\mathbb{F})$, 
$C_3\in M_{l\times k}(\mathbb{F})$, and 
$C_4\in M_l(\mathbb{F})$. 
If $C_1$ is invertible, then
{\small $\mathrm{rank}_{\mathbb{F}}(C)
= \mathrm{rank}_{\mathbb{F}}(C_1)
+
\mathrm{rank}_{\mathbb{F}}\!\left(C_4 - C_2 C_1^{-1} C_3\right).$}
\end{proposition}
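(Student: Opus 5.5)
The plan is to reduce $C$ to block-diagonal form by block Gaussian elimination, using multiplication by invertible matrices, which does not change rank. Since $C_1$ is invertible, we can clear the off-diagonal blocks $C_3$ and $C_2$ against $C_1$. Throughout, write $S := C_4 - C_2 C_1^{-1} C_3$ for the Schur complement of $C_1$ in $C$.

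Concretely, I will introduce the block matrices
\[
L=\begin{bmatrix} I_k & 0 \\ -C_3C_1^{-1} & I_l \end{bmatrix},
\qquad
R=\begin{bmatrix} I_k & -C_1^{-1}C_2 \\ 0 & I_l \end{bmatrix}.
\]
Both are block unitriangular, so each has determinant $1$ and is invertible over $\mathbb{F}$; their inverses are obtained by negating the off-diagonal block.

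The key computation is the product. First,
\[
LC=\begin{bmatrix} C_1 & C_2 \\ 0 & S \end{bmatrix},
\]
since the lower-left block is $-C_3C_1^{-1}C_1 + C_3 = 0$ and the lower-right block is $-C_3C_1^{-1}C_2 + C_4$. Then multiplying on the right by $R$ kills the upper-right block, because $C_1(-C_1^{-1}C_2)+C_2=0$, while leaving the diagonal blocks unchanged. This gives
\[
LCR=\begin{bmatrix} C_1 & 0 \\ 0 & S \end{bmatrix}.
\]

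To conclude, I will use two standard facts. Multiplying by invertible matrices on either side preserves rank, so $\operatorname{rank}(C)=\operatorname{rank}(LCR)$. The rank of a block-diagonal matrix is the sum of the ranks of its diagonal blocks, because its column space is the direct sum of the column spaces of the blocks, placed in complementary coordinates. Together these yield $\operatorname{rank}_{\mathbb{F}}(C)=\operatorname{rank}_{\mathbb{F}}(C_1)+\operatorname{rank}_{\mathbb{F}}(S)$. I do not expect any genuine obstacle; the only care needed is to keep the order of the non-commuting block products correct when verifying $LCR$.
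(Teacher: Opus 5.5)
Your proof is correct. It takes essentially the same approach the paper relies on: the paper gives no proof of its own and cites Zhang's book, where this rank formula is derived from exactly your block factorization $LCR=\begin{bmatrix} C_1 & 0\\ 0 & C_4-C_2C_1^{-1}C_3\end{bmatrix}$ with block unitriangular $L,R$, followed by rank invariance and additivity over diagonal blocks.
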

The following lemma relates the rank of a symmetric matrix $A$ to the rank of its square $A^2$ over the field $\F_2$, and is used in the proofs of Propositions~\ref{snfor5557} and~\ref{snfor553}.

\begin{lemma}\label{rankofA}
    Let $A \in M_r(\mathbb{F}_2)$ be a symmetric matrix over $\mathbb{F}_2$ such that the sum of the entries in each row of $A$ is zero. Assume further that $\rank_{\F_2}(A)=r-1.$ Then
     $$\rank_{\F_2}(A^2)=
     \begin{cases}
         r-1, ~~\text{if} ~~ r ~~\text{is odd},\\
         r-2,  ~~\text{if} ~~ r ~~\text{is even}.
     \end{cases}$$
\end{lemma}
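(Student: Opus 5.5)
The plan is linear algebra over $\F_2$, using only symmetry of $A$, $A\mathbf{1}_r=\mathbf{0}_r$, and $\rank_{\F_2}(A)=r-1$. Write $V=\F_2^{\,r}$ and consider $A$ as an endomorphism of $V$. Since every row of $A$ sums to zero, $A\mathbf{1}_r=\mathbf{0}_r$. Because $\rank_{\F_2}(A)=r-1$, the kernel has dimension one, so $\ker A=\langle \mathbf{1}_r\rangle$. Symmetry gives $\operatorname{Im}(A)=(\ker A^{\mathsf T})^{\perp}=(\ker A)^{\perp}=\mathbf{1}_r^{\perp}$, with respect to the standard bilinear form $\langle x,y\rangle=x^{\mathsf T}y$. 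Thus $\operatorname{Im}(A)$ is the hyperplane of vectors whose coordinates sum to $0$ in $\F_2$.

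Next compute $\rank(A^2)=\dim A(\operatorname{Im}A)=\dim\operatorname{Im}A-\dim(\operatorname{Im}A\cap\ker A)$. This follows from rank–nullity applied to $A$ restricted to $\operatorname{Im}A$. Since $\ker A=\langle\mathbf{1}_r\rangle$ is one-dimensional, the intersection is either $0$ or $\langle\mathbf{1}_r\rangle$. It is the latter exactly when $\mathbf{1}_r\in\mathbf{1}_r^{\perp}$, that is, when $\langle\mathbf{1}_r,\mathbf{1}_r\rangle=r\equiv 0\pmod 2$. Hence $\rank(A^2)=r-1$ if $r$ is odd, and $\rank(A^2)=r-2$ if $r$ is even.

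The only step needing care is the identification $\operatorname{Im}A=(\ker A)^{\perp}$ over $\F_2$. The standard form is non-degenerate on $\F_2^{\,r}$, although it may have isotropic vectors. The identity $\operatorname{Im}(A)=(\ker A^{\mathsf T})^{\perp}$ holds over any field by dimension count. Concretely, $x\in\operatorname{Im}A$ implies $\langle \mathbf{1}_r,x\rangle=\mathbf{1}_r^{\mathsf T}Ay=(A\mathbf{1}_r)^{\mathsf T}y=0$, so $\operatorname{Im}A\subseteq\mathbf{1}_r^{\perp}$. Both spaces have dimension $r-1$, so they are equal. Isotropy of $\mathbf{1}_r$ is exactly what produces the parity dichotomy, and nothing else is needed.
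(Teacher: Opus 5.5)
Your proof is correct and is essentially the paper's argument: both rest on the fact that symmetry and $A\mathbf{1}_r=\mathbf{0}_r$ give $\operatorname{Im}(A)=\mathbf{1}_r^{\perp}$, so $\mathbf{1}_r\in\operatorname{Im}(A)$ exactly when $r$ is even. The only difference is packaging. You apply rank--nullity to $A|_{\operatorname{Im}A}$ and handle both parities at once through $\dim(\operatorname{Im}A\cap\ker A)$, whereas the paper computes $\ker A^2$ separately in each parity case by a contradiction argument.
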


\begin{proof}
\begin{itemize}
    \item [Case 1:]Let $r$ be odd. 
    Since each row sum of $A$ is zero, we have $A\cdot\mathbf{1}_{r}=\mathbf{0}_{r}$.  Hence $\mathbf{1}_{r}\in \ker A$. Since nullity($A$)$=1$, $\ker A=\langle\mathbf{1}_{r}\rangle.$ To show $\rank_{\F_2}(A^2)=r-1,$ it is enough to show $\ker A^2=\langle\mathbf{1}_{r}\rangle$. Suppose nullity$(A^2)>1,$ then there exists $v \in \mathbb{F}_2^r,$ linearly independent of $\mathbf{1}_{r}$ such that $A^2v=\mathbf{0}_{r}$. Note that $Av\neq \mathbf{0}$. Now $A^2v=\mathbf{0}_{r}$ implies $Av \in \ker A$, hence $Av=\mathbf{1}_{r}.$ Taking the dot product on both sides with $\mathbf{1}_{r}$ yields $\mathbf{1}_{r}^{\mathsf T} A v = \mathbf{1}_{r}^{\mathsf T} \mathbf{1}_{r}= 1 $ (as $r$ is odd). However, $\mathbf{1}_{r}^{\mathsf T} A v = (Av)^{\mathsf T}A v= v^{\mathsf T}A^{\mathsf T}Av$, and since $A^{\mathsf T}=A$, $A^2v=\mathbf{0}_{r}$, it implies that $\mathbf{1}_{r}^{\mathsf T} A v= v^{\mathsf T}A^2v =0$. Hence we arrive at a contradiction. Therefore, $\ker A^2 = \langle \mathbf{1}_{r} \rangle.$

\item [Case 2:] Assume that $r$ is even. Note that $\ker A = \langle \mathbf{1}_{r} \rangle$ and we have $\ker A \subset \ker A^2$. Also, if $ v\in \ker A^2$, then $A v \in \langle \mathbf{1}_{r} \rangle$. Take any $u \in {\rm Im}(A)$. Then there exists $y \in \mathbb{F}_2^r$ such that $A y = u$. Because $A$ is symmetric, one checks that for all $u \in {\rm Im}(A)$ and all $x \in \ker A$, we have $x^{\mathsf T} u = 0$. Since $\mathbf{1}_{r} \in \ker A$, it follows that $\mathbf{1}_{r}^{\mathsf T} u = 0$ for all $u \in {\rm Im}(A)$, which means $\sum_{i=1}^r u_i = 0$. Thus ${\rm Im}(A)$ is contained in the set of all vectors $u \in \mathbb{F}_2^r$ with coordinate sum equal to $0$. A dimension argument shows that equality holds. Because $r$ is even, we have $\mathbf{1}_{r} \in {\rm Im}(A)$, so there exists some vector $v \neq \mathbf{1}_{r}$ with $A v = \mathbf{1}_{r}$. This shows that $\langle \mathbf{1}_{r}, v \rangle \subseteq \ker A^2$. Let $u \in \ker A^2 $ such that $\{u,v,\mathbf{1}_{r}\}$ is a linearly independent set. Then $A^2u=A^2v= \mathbf{0}_{r}$ which implies that both $Au, Av \in \ker A = \langle \mathbf{1}_{r} \rangle$. Hence $Au =  \mathbf{1}_{r} =Av$ which further implies that $u-v \in \ker A$. This gives $u-v= \mathbf{1}_{r}$ which contradicts the linearly independence of $u, v$ and $\mathbf{1}_{r}$. Hence $\ker A^2 = \langle \mathbf{1}_{r}, v \rangle$. Therefore $\operatorname{rank}_{\mathbb{F}_2}(A^2) = r - 2$.
    \end{itemize}
    \end{proof}

\begin{proposition}
\label{snfor5557}
Assume that $n$ is as in Theorem \ref{5557}. Then $s_n = 2$. Furthermore,
\[
Z = \{(1,1),\,(P,P),\,(q,1),\,(n,P)\},
\]
where $P := n/q = p_1 p_2 \dotsm p_t$, and $Z$ is as defined in Lemma \ref{2.4}.
\end{proposition}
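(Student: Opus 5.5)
The plan is to compute the Monsky matrix $M$ of \eqref{Monsky_matrix} explicitly for $n=Pq$. We then apply Lemma~\ref{Selmer rank} to get $s_n$, and read off $Z$ from the null space of $M$ via Lemma~\ref{2.4}. Order the primes as $p_1,\dots,p_t,q$, so $r=t+1$.

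\textbf{Step 1: the diagonal matrices.} Since $p_i\equiv 5\pmod 8$, we have $\legendre{2}{p_i}=\legendre{-2}{p_i}=-1$. Since $q\equiv 7\pmod 8$, we have $\legendre{2}{q}=1$ and $\legendre{-2}{q}=-1$. Hence
\[
D_2=\operatorname{diag}(I_t,0), \qquad D_{-2}=\operatorname{diag}(I_t,1).
\]

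\textbf{Step 2: the matrix $A$.} For $i\neq j$, quadratic reciprocity with $p_i\equiv 1\pmod 4$ makes the $p$-block symmetric, and it coincides with $A_P$ off the diagonal. The entries in column $q$ of row $p_i$ are $\phi\legendre{q}{p_i}=\phi\legendre{p_i}{q}=0$, using reciprocity and hypothesis (b). The entries in row $q$ are $\phi\legendre{p_j}{q}=0$ by (b). So the $q$-column contributes nothing to the diagonal sums, and
\[
A=\operatorname{diag}(A_P,0).
\]

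\textbf{Step 3: reduce the rank computation.} Deleting the zero row and column belonging to $q$ in the first block and isolating the entry $1$ in position $(q,q)$ of the second block gives
\[
\rank_{\F_2} M \;=\; 1+\rank_{\F_2}\begin{bmatrix} A_P+I_t & I_t\\ I_t & A_P+I_t\end{bmatrix}.
\]
Swap the two block columns so that the identity sits in the upper-left corner, and apply Proposition~\ref{rank of bdm} with $C_1=I_t$. Over $\F_2$ the Schur complement is $I_t+(A_P+I_t)^2=A_P^2$, so the bracketed block matrix has rank $t+\rank(A_P^2)$.

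\textbf{Step 4: apply Lemma~\ref{rankofA}.} The matrix $A_P$ is symmetric, has zero row sums (by construction of its diagonal), and has rank $t-1$ by hypothesis (a). Since $t$ is odd, Lemma~\ref{rankofA} gives $\rank(A_P^2)=t-1$. Therefore
\[
\rank M = 1+t+(t-1)=2t, \qquad s_n=2(t+1)-2t=2 .
\]

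\textbf{Step 5: identify $Z$.} Write a null vector of $M$ as $(x,x_q,y,y_q)$ with $x,y\in\F_2^t$.
\begin{itemize}
\item The last row of $M$ forces $y_q=0$.
\item The $q$-row of the first block is zero, so $x_q$ is free.
\item The remaining equations read $(A_P+I)x+y=0$ and $x+(A_P+I)y=0$. Substituting $y=(A_P+I)x$ into the second equation gives $A_P^2x=0$.
\item From the proof of Lemma~\ref{rankofA} (odd case), $\ker A_P^2=\langle \mathbf 1_t\rangle$. So either $x=0$, giving $y=0$, or $x=\mathbf 1_t$, giving $y=(A_P+I)\mathbf 1_t=\mathbf 1_t$.
\end{itemize}
Thus the null space is spanned by $\Psi(P,P)$ and $\Psi(q,1)$. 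Their sum is $\Psi(n,P)$. By Lemma~\ref{2.4} this yields $Z=\{(1,1),(P,P),(q,1),(n,P)\}$.

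The main point requiring care is Step 2: checking that hypothesis (b) together with reciprocity kills every entry involving $q$, so that $M$ decouples. After that, the rank reduction in Steps 3–4 and the kernel computation in Step 5 are routine.
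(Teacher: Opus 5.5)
Your proposal is correct and follows the paper's proof. It uses the same Monsky matrix, the same reduction to the $2t\times 2t$ block $M'$, Proposition~\ref{rank of bdm} with Schur complement $A_P^2$, and Lemma~\ref{rankofA}. The only minor difference is how $Z$ is identified. You solve $Mv=0$ explicitly, reducing it to $A_P^2x=0$, whereas the paper just checks that the four listed pairs lie in $Z$ and concludes from $\#Z=2^{s_n}=4$. One small wording point: $x_q$ is free because the $q$-column is zero, not merely the $q$-row; the two coincide here because $M$ is symmetric.
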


\begin{proof}
Here $n = p_1 p_2 \dotsm p_t q$ with $t > 0$ odd and $(p_i, q) \equiv (5,7) \pmod{8}$ for all $1 \leq i \leq t$. 
Since $\legendre{2}{p_i} = \legendre{-2}{p_i} = -1$ for all $1 \leq i \leq t$, and $\legendre{2}{q} = 1$ while $\legendre{-2}{q} = -1$, we obtain $D_{2}=$ {\scriptsize $ \left[\begin{array}{c|c}
I_t & \mathbf{0}_t\\
\hline 
\noalign{\vskip 2pt} 
\mathbf{0}_t^{\mathsf T} & 0
\end{array}\right]$} and  $D_{-2} = I_{t+1}.$
Since $\legendre{q}{p_i} = 1$ for all $1 \leq i \leq t$, it follows that the $(t+1)\times(t+1)$ matrix $A$ defined in \eqref{diaij} is given by
$A= \left[\begin{array}{c|c}
A_P & \mathbf{0}_t\\[2pt]
\hline
\noalign{\vskip 2pt} 
\mathbf{0}_t^{\mathsf T} & 0
\end{array}\right]$. Hence, the $(2t+2)\times(2t+2)$ Monsky matrix defined in \eqref{Monsky_matrix} assumes the following form
 {\small $$M=\left[
\begin{array}{c|c|c|c}
A_P+I_t & \mathbf{0}_t & I_t & \mathbf{0}_t\\
\hline
\noalign{\vskip 2pt} 
\mathbf{0}_t^{\mathsf T} & 0 & \mathbf{0}_t^{\mathsf T} & 0 \\
\hline
\noalign{\vskip 2pt} 
I_t & \mathbf{0}_t & A_P+I_t & \mathbf{0}_t \\
\hline
\noalign{\vskip 2pt} 
\mathbf{0}_t^{\mathsf T} & 0 & \mathbf{0}_t^{\mathsf T} & 1
\end{array}
\right].
$$}
Observe that the $(t+1)$-th row (resp. $(t+1)$-th column) of $M$ is a zero row (resp. zero column). Moreover, noting that the $(2t+2)$-th row (resp. $(2t+2)$-th column) contains only a single nonzero entry, we obtain
\begin{equation}
\label{+1}
{\mathrm{rank}}_{\F_2}(M)= {\mathrm{rank}}_{\F_2}(M')+1
\end{equation}
where  
$M'=${\scriptsize 
$\left[
\begin{array}{c|c}
A_P+I_t & I_t \\
\hline
\noalign{\vskip 2pt} 
I_t & A_P+I_t
\end{array}
\right]$}
 $\in M_{2t}(\F_{2}).$
Let $R_i$ (resp. $C_i$) denote the $i$-th row (resp. $i$-th column) of the matrix $M'$. Replacing the columns $C_i$ with $C_i+C_{t+i}$ for all $1 \leq i \leq t$, and then replacing the rows $R_{t+i}$ with $R_{t+i}+R_i$ for all $1 \leq i \leq t$, we obtain $M''=$ {\scriptsize$\left[
\begin{array}{c|c}
A_P & I_t \\
\hline
\noalign{\vskip 2pt} 
O_t & A_P
\end{array}
\right]$} $\in M_{2t}(\F_{2})$. Since the rank of a matrix is invariant under these row and column operations, we have $\operatorname{rank}_{\mathbb{F}_2}(M') = \operatorname{rank}_{\mathbb{F}_2}(M'')=\operatorname{rank}_{\mathbb{F}_2}$  {\scriptsize $\left[
\begin{array}{c|c}
 I_t & A_P \\
\hline
\noalign{\vskip 2pt} 
 A_P & O_t 
\end{array}
\right]$}. Hence, by Proposition \ref{rank of bdm}, we have 
$\operatorname{rank}_{\mathbb{F}_2}(M') = \operatorname{rank}_{\mathbb{F}_2}(I_t) + \operatorname{rank}_{\mathbb{F}_2}(A_P^2)$. Since $A_p$ is symmetric, Lemma \ref{rankofA} implies 
$\operatorname{rank}_{\mathbb{F}_2}(M') = 2t-1$.

 Hence ${\mathrm{rank}}_{\F_2}(M)=2t$ by \eqref{+1}. Since the prime factorization of $n$ involves $t+1$ distinct primes, by Lemma \ref{Selmer rank} we have $s_n=2$. Note that $\{(1,1), (P,P),\,(q,1),\,(n,P)\} \subseteq \ker(\phi_p) \subseteq Z$ for each prime $p\mid n$. Hence we have  $Z=\{(1,1), (P,P),\,(q,1),\,(n,P)\}$ by using Lemma \ref{2.4}.
\end{proof}
Let $P = p_1 \cdots p_t$ as in Theorem \ref{553}, where $t$ is even and the distinct primes $p_i$ are congruent to $5$ modulo $8$. As noted in Lemma \ref{rankofA}, we have $\mathbf{1}_{t} \in \mathrm{Im}(A_P)$. Let $v = [v_1, \dotsc, v_t]^{\mathsf T}$ be such that $A_P v = \mathbf{1}_{t}$. Define
\begin{equation}
 \label{PvPv'}  
P_v := \prod_{p_i \mid P} p_i, \quad \text{with } p_k = 1 \text{ if } v_k = 0, \quad \text{  and }
P_v' := \frac{P}{P_v}. 
\end{equation}
We have the following:

\begin{proposition}
\label{snfor553}
Assume that $n$ is as in Theorem \ref{553}. Then $s_n = 2$. Furthermore, 
\[
Z = \{(1,1),\,(P,P),\,(P_v, P_v'),\,(P_v', P_v)\},
\]
where $Z$ is defined as in Lemma \ref{2.4}, and $P_v$ and $P_v'$ are as in \eqref{PvPv'}.
  
\end{proposition}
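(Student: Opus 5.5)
The plan is to imitate the proof of Proposition~\ref{snfor5557}: write down the Monsky matrix explicitly, reduce its rank to that of $A_P^2$ through the block manipulations already used, and then use Lemma~\ref{2.4} to identify the four elements of $Z$ by direct verification.

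\textbf{Step 1: the entries of $M$.} Since $p_i\equiv 5\pmod 8$, we have $\legendre{2}{p_i}=\legendre{-2}{p_i}=-1$. Since $q\equiv 3\pmod 8$, we have $\legendre{2}{q}=-1$ and $\legendre{-2}{q}=1$. Hence
\[
D_2=I_{t+1}, \qquad D_{-2}=\mathrm{diag}(I_t,0).
\]
Because $p_i\equiv 1\pmod 4$, quadratic reciprocity together with hypothesis (b) gives $\legendre{q}{p_i}=\legendre{p_i}{q}=1$. So every off-diagonal entry of $A$ that involves $q$ vanishes, the $q$-diagonal entry is $0$, and $A=\mathrm{diag}(A_P,0)$. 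Ordering coordinates as in the proof of Proposition~\ref{snfor5557}, the $(t+1)$-th and $(2t+2)$-th rows and columns of $M$ are zero except in the $2\times 2$ block
$\left[\begin{smallmatrix}1&1\\1&0\end{smallmatrix}\right]$ they share. The remaining rows and columns form exactly the matrix $M'$ from the proof of Proposition~\ref{snfor5557}.

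\textbf{Step 2: the rank of $M$ and the value of $s_n$.} The $2\times 2$ block above is invertible, so
\[
\rank_{\F_2}(M)=\rank_{\F_2}(M')+2.
\]
The same row and column operations as before, followed by Proposition~\ref{rank of bdm}, give
\[
\rank_{\F_2}(M')=t+\rank_{\F_2}(A_P^2).
\]
Now $t$ is even and $\rank_{\F_2}(A_P)=t-1$ by hypothesis (a), so Lemma~\ref{rankofA} gives $\rank_{\F_2}(A_P^2)=t-2$. Therefore $\rank_{\F_2}(M)=2t$. Since $n$ has $t+1$ prime factors, Lemma~\ref{Selmer rank} yields $s_n=2(t+1)-2t=2$.

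\textbf{Step 3: identifying $Z$.} By Lemma~\ref{2.4}, $\#Z=4$, so it suffices to show that the four listed pairs are distinct and that their images under $\Psi$ lie in $\ker M$. None of these pairs involves $q$, so both $q$-coordinates vanish, and the $q$-rows of $M$ (which only see $q$-coordinates) are satisfied. It then remains to check that $(x,y)\in\F_2^{2t}$ satisfies
\[
(A_P+I)x+y=0, \qquad x+(A_P+I)y=0.
\]
\begin{itemize}
\item The pair $(1,1)$ corresponds to $(x,y)=(0,0)$, which is trivial.
\item The pair $(P,P)$ corresponds to $x=y=\mathbf 1_t$. Using $A_P\mathbf 1_t=\mathbf 0_t$, both equations reduce to $\mathbf 1_t+\mathbf 1_t=0$.
\item The pair $(P_v,P_v')$ corresponds to $x=v$ and $y=v+\mathbf 1_t$. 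Both equations collapse to $A_Pv+\mathbf 1_t=0$, which holds by the choice of $v$.
\item The pair $(P_v',P_v)$ follows by the symmetry of the two equations.
\end{itemize}
For distinctness, note that $A_Pv=\mathbf 1_t\neq \mathbf 0_t$ while $A_P\mathbf 1_t=\mathbf 0_t$. Hence $v\notin\{\mathbf 0_t,\mathbf 1_t\}$, and so $P_v\notin\{1,P\}$.

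\textbf{Main obstacle.} The only delicate points are the sign bookkeeping in Step~1 and the existence of $v$. The sign bookkeeping concerns $\legendre{-2}{q}=1$ and the reciprocity step that makes the $q$-row of $A$ vanish. The existence of $v$ is already supplied by Case~2 of Lemma~\ref{rankofA}.
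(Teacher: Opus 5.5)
Your proposal is correct and follows essentially the same route as the paper: you write the Monsky matrix explicitly, split off the invertible $2\times 2$ block to get $\rank_{\F_2}(M)=\rank_{\F_2}(M')+2$, reduce to $A_P^2$ via the even case of Lemma~\ref{rankofA}, and then exhibit $\Psi(P,P)$ and $\Psi(P_v,P_v')$ in $\ker M$ and use $\#Z=4$. Your explicit check that $v\notin\{\mathbf 0_t,\mathbf 1_t\}$, which makes the four pairs distinct, fills in a point the paper leaves implicit when it writes $Z=\langle (P,P),(P_v,P_v')\rangle$.
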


\begin{proof}
Observe that $n = p_{1}\cdots p_{t}q$, where each $p_i$ is a prime with $p_i \equiv 5 \pmod{8}$, the prime $q \equiv 3 \pmod{8}$, and $t$ is even. Note that in this case  $D_2= I_{t+1}$ and $D_{-2} = $ {\scriptsize$\left[\begin{array}{c|c}
I_t & \mathbf{0}_t\\
\hline \noalign{\vskip 2pt} 
\mathbf{0}_t^{\mathsf T} & 0
\end{array}\right]$}.
Hence, the Monsky matrix is given by
{\small \begin{equation*}
     M=\left[
\begin{array}{c|c|c|c}
A_P+I_t & \mathbf{0}_t & I_t & \mathbf{0}_t\\
\hline \noalign{\vskip 2pt} 
\mathbf{0}_t^{\mathsf T} & 1 & \mathbf{0}_t^{\mathsf T} & 1 \\
\hline \noalign{\vskip 2pt} 
I_t & \mathbf{0}_t & A_P+I_t & \mathbf{0}_t \\
\hline \noalign{\vskip 2pt} 
\mathbf{0}_t^{\mathsf T} & 1 & \mathbf{0}_t^{\mathsf T} & 0
\end{array} 
\right] \in M_{2t+2}(\F_{2}).
 \end{equation*}}
Note that ${\mathrm{rank}}_{\F_2}(M)= {\mathrm{rank}}_{\F_2}(M')+2$ where  $M'=$ {\scriptsize $\left[
\begin{array}{c|c}
A_P+I_t & I_t \\
\hline \noalign{\vskip 2pt} 
I_t & A_P+I_t
\end{array}
\right] $} $\in M_{2t}(\F_{2})$. Since $ A_P $ is symmetric, it follows from Lemma \ref{rankofA}, together with an argument analogous to that used in the proof of Proposition \ref{snfor5557}, that the $2$-Selmer rank satisfies $ s_n = 2 $.

It is easy to see that the vector $(\mathbf{1}_t^{\mathsf T},\,0,\,\mathbf{1}_t^{\mathsf T}\,,0)^{\mathsf T} \in \ker(M) \subset \mathbb{F}^{2t+2}_{2}$. Hence, by Lemma~\ref{2.4}, we obtain $(P,P) \in Z=\bigcap_{p \mid n}\ker(\phi_p)$. Since the vector $\mathbf{1}_t \in \mathrm{Im}(A_P)$, there exists $v$ such that $A_Pv=\mathbf{1}_t$. Then clearly $(v^{\mathsf T},\,0,\,(\mathbf{1}_t+v)^{\mathsf T},\,0)^{\mathsf T} \in \ker(M)\subset \mathbb{F}^{2t+2}_{2}$. Note that $(v^{\mathsf T},\,0,\,(\mathbf{1}_t+v)^{\mathsf T},\,0)^{\mathsf T}=\Psi(P_v,P_v')$, where $\Psi$ is as in~\eqref{GXG} and $P_v,P_v'$ are as in~\eqref{PvPv'}. Consequently, $Z=\langle (P,P),(P_v,P_v')\rangle$.   \end{proof}

  \begin{remark}
    Assume that $n$ is a congruent number, then the Mordell-Weil rank $r_n \geq 1$, hence we have $s_n \geq 1.$ By Lemma \ref{local sol}, the system of equations \eqref{equation} admit non-trivial and pairwise co-prime integral solutions $(x, y, z, w)$ for $n$ for at least one non-trivial pair $(u, u')$. Since the existence of a global (integral) solution to \eqref{equation} implies the existence of a local solution, we conclude that the system \eqref{equation} has a non-trivial solution for at least one non-trivial pair $(u,u^\prime) \in Z$ where $Z$ is defined in Lemma \ref{2.4}.
\end{remark}

\begin{proposition}\label{r_n=2}
Suppose that $n = p_1p_2\cdots p_tq$ is as in Theorem~\ref{553}, and assume that $n$ is a congruent number. If $\Sha(E_n/\mathbb{Q})[2]=0$, then $r_n=2$.
\end{proposition}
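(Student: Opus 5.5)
Proposition \ref{snfor553} gives $s_n = 2$ for every $n$ as in Theorem~\ref{553}, so the proof is a direct combination of that result with the descent relation \eqref{relation-of-selmer-sha}. Two inputs are needed: the Selmer rank and the vanishing of the $2$-part of $\Sha$.

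First I would recall the exact sequence \eqref{Descent-exact-seq} and its consequence \eqref{relation-of-selmer-sha},
\[
s_n = r_n + \dim_{\F_2}\Sha(E_n/\Q)[2].
\]
By Proposition~\ref{snfor553}, we have $s_n = 2$. By hypothesis, $\Sha(E_n/\Q)[2]=0$, so its $\F_2$-dimension is $0$. Substituting both into the relation gives $r_n = s_n = 2$.

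The assumption that $n$ is congruent is therefore not needed for the conclusion $r_n=2$. It only serves as a consistency check, since congruent means $r_n \ge 1$. One could also present the argument with the weaker hypothesis that $\dim_{\F_2}\Sha(E_n/\Q)[2]$ is even, which is what the Cassels--Tate pairing provides when $\Sha$ is finite. In that version, $s_n=2$ together with $r_n\ge1$ and $r_n\equiv s_n \pmod 2$ from \eqref{relation of selmer sha} forces $r_n=2$, using \eqref{inequality-selmer} to get $r_n\le 2$. I would mention this variant because it is the form relevant to hypothesis (c) of Theorem~\ref{553}.

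There is no real obstacle here. The only care needed is to cite the correct ingredient: Proposition~\ref{snfor553} for $s_n=2$, which rests on Lemma~\ref{rankofA} in the even case and the Monsky matrix computation. Everything else is bookkeeping in the descent sequence.
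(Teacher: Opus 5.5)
Your proof is correct, and your main argument is a shortcut of the paper's. The paper does not substitute $\dim_{\F_2}\Sha(E_n/\Q)[2]=0$ into \eqref{relation-of-selmer-sha}. It argues exactly as in your variant:
\begin{itemize}
\item congruence gives $r_n\ge 1$;
\item Proposition~\ref{snfor553} and \eqref{inequality-selmer} give $r_n\le 2$;
\item the Cassels--Tate pairing gives $r_n\equiv s_n \pmod 2$ via \eqref{relation of selmer sha}, which forces $r_n=2$.
\end{itemize}

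Under the literal hypothesis $\Sha(E_n/\Q)[2]=0$, your direct substitution is simpler. As you observe, it makes the congruent-number assumption redundant; in fact congruence follows, since $r_n=2>0$. The parity route is the one that matters downstream, because the proof of Theorem~\ref{553} invokes this proposition under hypothesis (c) rather than under $\Sha(E_n/\Q)[2]=0$.

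Your phrase ``when $\Sha$ is finite'' names the right input for that version. Finiteness of $\Sha(E_n/\Q)[2]$ itself, as stated in (c), is automatic: it is a quotient of the finite group $\Sel_2(E_n/\Q)$ in \eqref{Descent-exact-seq}. So it cannot supply the parity. The Cassels--Tate argument needs finiteness of the $2$-primary part $\Sha(E_n/\Q)[2^\infty]$.
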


\begin{proof}
    Since $n$ is a congruent number, we have $r_n >0$. By Proposition \ref{snfor553}, we have $s_n=2$ and by the inequality (\ref{inequality-selmer}), we have $0 <
    r_n \leq 2.$ Since $\Sha(E_n/\Q)[2]=0$, by Cassel-Tate pairing \cite{cassels1962_arithmetic_iv}, $\dim_{\F_2}\Sha(E_n/\Q)[2]$ is even.  Using (\ref{relation of selmer sha}) we have $r_n\equiv s_n \pmod2$ and since $0 < r_n \leq 2$, we have $r_n=s_n=2$ .
\end{proof}




\section{\texorpdfstring{$4$}{lg}-rank and \texorpdfstring{$8$}{lg}-rank of Class Group}\label{4 rank and 8 rank}
In this section, we explain the method to computation of the $4$-rank and $8$-rank of the class group of the imaginary quadratic field $\mathbb{Q}(\sqrt{-m})$, where $m$ is a positive square-free integer of a specific type. Throughout, we assume that $m$ is odd.
 Then the discriminant of $\mathbb{Q}(\sqrt{-m})$ is given by 
$$D(-m) =
\begin{cases}
-4m, & \text{if } m \equiv 1\pmod{4}, \\
-m, & \text{if } m \equiv 3 \pmod{4}.
\end{cases}$$
We now recall the notion of the $2^k$-rank of a finite abelian group.
For a finite abelian group $C$, its $2^k$-rank is defined as
\begin{align*}
r_{2^k}(C) 
&= \dim_{\F_2} \left(2^{k-1}C / 2^kC\right) = \dim_{\F_2} \left(C[2^k] / C[2^{k-1}]\right),
\end{align*}
where $C[\ell]$ denotes the $\ell$-torsion subgroup of $C$.
In this paper, we are particularly interested in the ideal class group $C(-m)$. Accordingly, throughout the paper we write $r_{2^k}(-m)$
to denote the $2^k$-rank of $C(-m)$.
By Gauss genus theory \cite{cox2022primes}, the $2$-rank of its class group $C(-m)$ is $r_2(-m)=r-1$ where $r$ is the number of distinct prime divisors of $D(-m)$. Hence, we have $h(-m) \equiv 0 \pmod {2^{r-1}}.$




Before diving into the study of $2$-primary part of class group of certain imaginary quadratic field, it is worth introducing a tool that will be largely
used throughout the paper, the Hilbert symbols. For a rational prime $p$ and rational numbers $a$ and $b$, we denote the Hilbert symbol by $\legendre{a,b}{p}$. Define 
$\varepsilon(a) = \frac{a-1}{2}$  and  $\omega(a) = \frac{a^2 - 1}{8}$ for an odd positive integer $a$. For a prime $p$, write $a = p^{\alpha}u$ and $b = p^{\beta}v$, where $uv$ is coprime to $p$. Then
$$ \legendre{a,b}{p}
=
\begin{cases}
(-1)^{\alpha \beta \varepsilon(p)} \legendre{u}{p}^{\beta} \legendre{v}{p}^{\alpha}, & \text{if $p$ is odd;}\\[1em]
(-1)^{\alpha \omega(v) + \beta \omega(u) + \varepsilon(u)\varepsilon(v)}, & \text{if $p=2$.}
\end{cases}$$

We now introduce the generalized Rédei matrix, which serves as an important tool for studying the $2$-primary part of the class group of imaginary quadratic number fields. 
Let $\ell_1, \ell_2, \ldots, \ell_r$  be distinct primes dividing the discriminant $D(-m)$ of the imaginary quadratic field $\mathbb{Q}(\sqrt{-m})$. The generalized Rédei matrix $R(-m)$ is the $r \times r$ matrix over $\mathbb{F}_2$ given by
\begin{equation}\label{Redei-Matrix}
    R(-m) := \left( \phi\legendre{\ell_j, D(-m)}{\ell_i} \right)_{1 \leq i,j \leq r},
\end{equation}
where the map $\phi$ is defined as in \eqref{eq:phi-def}; see \cite{YueRedei} for further details.

Observe that the entries of the generalized Rédei matrix involve certain Hilbert symbols. The following lemma shows that these symbols can be expressed in terms of simpler Legendre symbols.
\begin{lemma}\label{Hilbert symbol-simplification}
   Let $\ell_1, \ell_2, \ldots, \ell_r$ be the odd prime divisors of the discriminant $D(-m)$.
   Then $$\left(\dfrac{\ell_j,D(-m)}{\ell_i}\right)=
   \begin{cases}
       \displaystyle \legendre{\ell_j}{\ell_i}, & i\neq j,\\
\displaystyle \prod_{j:\, j\neq i}\legendre{\ell_j}{\ell_i}, & i=j.
   \end{cases}$$
\end{lemma}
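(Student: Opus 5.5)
The plan is to evaluate each Hilbert symbol directly from the explicit formula for odd $p$ recalled just above, with $p=\ell_i$. Two facts about $D(-m)$ will be used throughout. First, since $m$ is odd and square-free, $D(-m)$ equals $-m$ or $-4m$. Second, every odd prime $\ell_i \mid D(-m)$ therefore divides $D(-m)$ exactly once, and the odd prime divisors of $D(-m)$ are precisely the prime divisors of $m$. I would write
\[
D(-m)=\ell_i\, v_i, \qquad v_i = -c\,\frac{m}{\ell_i}, \qquad c\in\{1,4\},
\]
so that $v_i$ is an $\ell_i$-adic unit.

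\emph{Off-diagonal case $i\neq j$.} Take $a=\ell_j$ and $b=D(-m)$. Then $a$ is a unit at $\ell_i$ and $b$ has $\ell_i$-adic valuation $1$, so $\alpha=0$ and $\beta=1$. The formula collapses to $\legendre{\ell_j}{\ell_i}$, since the sign factor and the factor $\legendre{v}{p}^{\alpha}$ both disappear.

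\emph{Diagonal case $i=j$.} Take $a=\ell_i$ and $b=\ell_i v_i$, so $\alpha=\beta=1$ and $u=1$. The formula gives
\[
\legendre{\ell_i,D(-m)}{\ell_i} = (-1)^{\varepsilon(\ell_i)}\legendre{v_i}{\ell_i}.
\]
I would then expand the Legendre symbol multiplicatively:
\[
\legendre{v_i}{\ell_i}=\legendre{-1}{\ell_i}\legendre{c}{\ell_i}\legendre{m/\ell_i}{\ell_i},
\]
and simplify term by term. The factor $\legendre{c}{\ell_i}$ equals $1$ because $c$ is a square. The factor $\legendre{-1}{\ell_i}$ equals $(-1)^{\varepsilon(\ell_i)}$, so it cancels the sign in front. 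What remains is
\[
\legendre{m/\ell_i}{\ell_i}=\prod_{j\neq i}\legendre{\ell_j}{\ell_i},
\]
where $j$ runs over the other odd primes dividing $D(-m)$; this is the claimed formula.

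The only step needing care is handling the negative unit $v_i$. The paper defines $\varepsilon$ only for positive odd integers, so I would not feed $v_i$ into $\varepsilon$. Instead I would extract $-1$ through the multiplicativity of the Legendre symbol, as above. I would also record explicitly that the prime $2$, when it divides $D(-m)$, is not among the indices $j$ in the product, and that this is harmless because the $4$ contributes a square.
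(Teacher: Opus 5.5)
Your proof is correct, but it takes a different route from the paper. The paper does no computation here; it simply cites Lemma~2.1 of \cite{qin20052}.

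You instead derive the identity directly from the odd-prime Hilbert symbol formula that the paper already recalls. Off the diagonal you take $\alpha=0$, $\beta=1$. On the diagonal you take $\alpha=\beta=1$ and $u=1$, which reduces the symbol to $(-1)^{\varepsilon(\ell_i)}\legendre{v_i}{\ell_i}$.

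What your route buys is a self-contained argument that exposes the two cancellations behind the diagonal formula:
\begin{enumerate}
\item the factor $\legendre{-1}{\ell_i}$, which comes from $D(-m)<0$, cancels $(-1)^{\varepsilon(\ell_i)}$;
\item the factor $4$ in $D(-m)=-4m$ is a square, so it contributes nothing.
\end{enumerate}
This shows why one formula covers both $D(-m)=-m$ and $D(-m)=-4m$, and why the prime $2$ never enters the diagonal product. That is exactly the form in which the lemma is later applied to $R(-P)$ with $D(-P)=-4P$. The citation is shorter but hides these points.

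Your caution about not applying $\varepsilon$ to the negative unit $v_i$ is unnecessary. In the odd-$p$ formula, $\varepsilon$ is only ever evaluated at $p=\ell_i$. It does no harm, though.
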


\begin{proof}
     The result follows directly from Lemma 2.1 of \cite{qin20052}.
\end{proof}

 The following result, due to Waterhouse \cite{waterhouse1973pieces}, is useful for computing the $4$-rank and $8$-rank of the class group of imaginary quadratic fields. It is stated here in a slightly modified form to suit our setting. Here, $r_4(-m)$ and $r_8(-m)$ denote the $4$-rank and $8$-rank of the class group of $\mathbb{Q}(\sqrt{-m})$, respectively.

\begin{proposition}\label{Waterhouse 4,8 rank}
   Let $m$ be a square-free integer. Suppose that the discriminant $D(-m)$ of the field $\Q(\sqrt{-m})$ has $r$ distinct prime factors $\ell_1, \ell_2, \dots, \ell_r$. Let $d$ and $d'$ run over the square-free positive divisors of $D(-m)$. Then the following hold:

\begin{enumerate}
    \item There are $2^{r_4(-m)+1}$ values of $d$ satisfying $\legendre{d,D(-m)}{\ell_i} = 1$ for all $1 \leq i \leq r$.
    \item For each $d$ in (1), choose a solution $(x_d, y_d, z_d)$ of the equation $x^2 - 4 d y^2 - D(-m) z^2 = 0$ in positive integers with no common factor. Then there are $2^{r_8(-m)+1}$ values for which there exists $d'$ such that 
    \[
        \legendre{y_d, D(-m)}{\ell_i} = \legendre{d', D(-m)}{\ell_i} \quad \text{for all } 1 \leq i \leq r.
    \]
\end{enumerate}
\end{proposition}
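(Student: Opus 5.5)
We prove both parts with Gauss--R\'edei genus theory for the form class group of discriminant $D:=D(-m)$, reading genus characters as the Hilbert symbols $\legendre{\cdot,D}{\ell_i}$. Let $K=\Q(\sqrt{-m})$ and $C=C(-m)$. For a square-free positive divisor $d\mid D$, let $\mathfrak a_d$ be the product of the ramified primes of $K$ above the primes dividing $d$, so that $\mathfrak a_d^2=(d)$. The outline is: first identify when $[\mathfrak a_d]$ is a square (part (1)), then when it is a fourth power (part (2)), each time dividing the count by the size $2$ of the kernel of $d\mapsto[\mathfrak a_d]$.

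\textbf{Step 1: the map $d\mapsto[\mathfrak a_d]$.} The map $d\mapsto[\mathfrak a_d]$ is a homomorphism from the group of square-free positive divisors of $D$ (under $*$ as in \eqref{thegroupG}) onto $C[2]$. Surjectivity holds because ambiguous classes of an imaginary quadratic field are represented by ramified ideals. Its kernel has order exactly $2$, generated by the divisor $d_0$ with $\mathfrak a_{d_0}=(\sqrt{-m})$. The domain has order $2^r$, and $\#C[2]=2^{r-1}$ by genus theory, so these counts are consistent.

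\textbf{Step 2: part (1).} By the principal genus theorem, a class lies in $C^2$ if and only if all genus characters vanish on it. The character values on $[\mathfrak a_d]$ are $\legendre{d,D}{\ell_i}$: these symbols are all $1$ exactly when $d$ is a local norm from $K$ at every $\ell_i$. The primes not dividing $D$ are automatic, and the product formula handles the one relation among the characters. Hence the $d$ satisfying the condition in (1) form the full preimage of $C[2]\cap C^2$. Since $C[2]\cap C^2$ has order $2^{r_4(-m)}$ and the kernel has order $2$, there are $2^{r_4(-m)+1}$ such $d$.

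\textbf{Step 3: part (2).} For $d$ as in (1), $d$ is a norm everywhere locally, so by Hasse--Minkowski the conic $x^2-4dy^2-Dz^2=0$ has a primitive solution $(x_d,y_d,z_d)$. From this solution we build an ideal $\mathfrak b$ with
\[
\mathfrak b^2\sim\mathfrak a_d \quad\text{and}\quad N\mathfrak b = y_d \text{ up to squares and ramified factors.}
\]
Concretely, we factor $x_d^2-Dz_d^2=4dy_d^2$, that is,
\[
\tfrac{x_d+z_d\sqrt D}{2}\cdot\tfrac{x_d-z_d\sqrt D}{2}=d\,y_d^2 ,
\]
and use primitivity to show that $\bigl(\tfrac{x_d+z_d\sqrt D}{2}\bigr)=\mathfrak a_d\,\mathfrak b^2$ with $N\mathfrak b=y_d$. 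This gives $[\mathfrak a_d]=[\mathfrak b]^{-2}$, and the genus characters of $[\mathfrak b]$ are $\legendre{y_d,D}{\ell_i}$.

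Every square root of $[\mathfrak a_d]$ has the form $[\mathfrak b][\mathfrak a_{d'}]$, because the square roots differ by elements of $C[2]$, which Step 1 identifies with the classes $[\mathfrak a_{d'}]$. Therefore $[\mathfrak a_d]\in C^4$ if and only if some $[\mathfrak b\mathfrak a_{d'}]$ lies in $C^2$. By the principal genus theorem this holds if and only if, for some $d'$,
\[
\legendre{y_d,D}{\ell_i}=\legendre{d',D}{\ell_i}\quad\text{for all } 1\le i\le r .
\]
So the $d$ counted in (2) form the preimage of $C[2]\cap C^4$. This subgroup has order $2^{r_8(-m)}$, and dividing by the kernel of order $2$ gives $2^{r_8(-m)+1}$ values. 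The count does not depend on the chosen solution $(x_d,y_d,z_d)$, since changing it only changes $[\mathfrak b]$ by an element of $C[2]$, which is absorbed into $d'$.

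\textbf{Main obstacle.} The hardest step is the ideal-theoretic factorization in Step 3. It requires the exact factorization of $\tfrac{x_d+z_d\sqrt D}{2}$ and control of common factors between $y_d$ and $D$, including the prime $2$ when $D=-4m$ (hence the factor $4$ in the conic). It also requires checking that the genus characters of $\mathfrak b$ are given by $\legendre{y_d,D}{\ell_i}$ even at primes $\ell_i\mid y_d$. I would first treat the case $\gcd(y_d,D)=1$, where the character value is just $\legendre{y_d}{\ell_i}$. I would then reduce the general case to it by multiplying $\mathfrak b$ by the appropriate ramified ideal $\mathfrak a_e$ with $e\mid\gcd(y_d,D)$, which only shifts $d'$ by $e$.
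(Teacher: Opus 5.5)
The paper gives no proof of this proposition. It quotes the result from Waterhouse \cite{waterhouse1973pieces} ``in a slightly modified form'' and only uses its reformulation in Remark~\ref{Waterhouse-remark}: nullity of $R(-m)$, and solvability of $R(-m)\mathbf v=\mathbf C_d$. Your proposal is therefore not competing with an internal argument; it is a self-contained justification of the cited statement, and it is correct. It is the standard R\'edei-type genus-theory proof, with three ingredients:
\begin{itemize}
\item $d\mapsto[\mathfrak a_d]$ maps the $2^r$ divisors onto $C[2]$ with kernel of order $2$. This kernel is exactly the source of the ``$+1$'' in both exponents.
\item The Hilbert-symbol genus characters $\mathfrak a\mapsto\legendre{N\mathfrak a,D}{\ell_i}$ have kernel exactly $C^2$.
\item A primitive point on the conic produces a square root $[\mathfrak b]$ of $[\mathfrak a_d]$ whose characters are $\legendre{y_d,D}{\ell_i}$.
\end{itemize}
The counts then come from $\#(C[2]\cap C^2)=2^{r_4}$ and $\#(C[2]\cap C^4)=2^{r_8}$. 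Your route also checks something the paper takes on trust: that the modified formulation really gives the stated counts. That formulation uses Hilbert symbols, positive divisors of $D$ including $2$, and the factor $4$ in the conic.

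The step you flag as the main obstacle is easier than you fear, because primitivity forces $\gcd(y_d,D)=1$:
\begin{itemize}
\item If an odd $\ell\mid D$ divided $y_d$, then $\ell\mid x_d$. Since $\ell^2\nmid D$, this gives $\ell\mid z_d$, contradicting primitivity.
\item If $2\mid D$, write $x_d=2x'$, so $x'^2+mz_d^2=dy_d^2$. Then $y_d$ even forces $x'$ and $z_d$ even, again a contradiction.
\end{itemize}
Now put $\alpha=\tfrac{x_d+z_d\sqrt D}{2}\in\mathcal O_K$, so that $N\alpha=dy_d^2$, and read off $(\alpha)$ prime by prime:
\begin{itemize}
\item The ramified part of $(\alpha)$ is exactly $\mathfrak a_d$.
\item No inert prime divides $y_d$, since it would divide $\alpha$ and hence $x_d$ and $z_d$.
\item For a split $p\mid y_d$, only one of $\mathfrak p,\bar{\mathfrak p}$ divides $\alpha$; otherwise $p\mid\alpha$, again contradicting primitivity.
\end{itemize}
Hence $(\alpha)=\mathfrak a_d\mathfrak b^2$ with $N\mathfrak b=y_d$ exactly, not merely up to squares. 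Your planned reduction via $\mathfrak a_e$ is unnecessary.

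In any case, $\mathfrak a\mapsto\legendre{N\mathfrak a,D}{\ell_i}$ is well defined on all fractional ideals, because norms of principal ideals are positive and are local norms everywhere. So coprimality with $D$ was never needed to read off the characters of $\mathfrak b$.

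One trivial caveat: for $\Q(i)$ the kernel of $d\mapsto[\mathfrak a_d]$ is generated by $d=2$, not by the divisor attached to $(\sqrt{-1})$. The order-$2$ count is unaffected.
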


\begin{remark}\label{Waterhouse-remark}
{ \rm
\begin{enumerate}
    \item  Let $m_{0}$ be the positive square-free divisor of the discriminant $D(-m)$ such that every prime dividing $D(-m)$ also divides $m_{0}$. Observe that $\legendre{d, D(-m)}{\ell_i} = 1$ for a positive square-free divisor $d$ of $D(-m)$ if and only if $R(-m)\mathbf{v_d} = \mathbf{0}_{r}$, that is, $\mathbf{v_d}$ lies in the null space of the generalized Rédei matrix $R(-m)$, where $\mathbf{v_d}= \varphi(d)$ for $d \in G(m_{0})$  and $\varphi$ is defined as in \eqref{varphi-def}. Also the condition $\legendre{d, D(-m)}{\ell_i} = 1$ for all $i=1,\ldots, r$ is equivalent to the local solvability at $\ell_i$ of the quadratic equation $x^2-4dy^2-D(-m)z^2=0$, and hence, by the Legendre (Hasse–Minkowski) criterion, to the existence of a primitive integral solution. Equivalently, $\mathbf{v_d}$ is in the null space of $R(-m)$ if and only if the quadratic equation 
   \begin{equation}\label{quadratic equation for Legendre}
        x_d^2-4dy_d^2-D(-m)z_d^2=0
   \end{equation}
    admits a solution $(x_d, y_d,z_d) \in \Z^\times \times \Z^\times \times \Z^\times$. It then follows from Proposition~\ref{Waterhouse 4,8 rank} that $r_4(-m) = \text{Nullity}(R(-m)) - 1.$
    
    \item  Consider the imaginary quadratic field $\Q(\sqrt{-m})$, where $m=\ell_1 \ell_2\cdots \ell_r$ satisfies $m\equiv 3\pmod{8}$, and assume that $r_4(-m)=1$. Let $K(-m)=\{1,d_1,d_2,m\}$ with $d_1d_2=m$ and $\legendre{d,D(-m)}{l_i}=1$ for all $d\in K(-m)$. For each $d\in K(-m)$, let $(x_d,y_d,z_d)\in\Z^\times\times\Z^\times\times\Z^\times$ be a primitive solution of \eqref{quadratic equation for Legendre}, and consider the linear system \begin{equation}\label{8-rank system of equ} 
    R(-m)\mathbf{v}=\mathbf{C}_d, 
    \end{equation}
 where $\mathbf{C}_d=(c_1,\dots,c_r)^{\mathsf T}$ with $c_i=\phi\!\left(\legendre{y_d,D(-m)}{l_i}\right)$ and $\phi$ as in \eqref{eq:phi-def}. The system is always solvable for $d\in\{1,m\}$ and satisfies $y_{dd'}=y_d y_{d'}$ for $d,d'\in K(-m)$. Hence, by Proposition~\ref{Waterhouse 4,8 rank}, we have $r_8(-m)=1$ if and only if the above system is solvable for $d=d_1$ (equivalently, $d=d_2$); see also Remark~2.8 of \cite{das2026quantitative}.


\end{enumerate}
}   
\end{remark}


\subsection{\texorpdfstring{$4$}{lg}-Rank of the class group} 
 We first observe that if $n$ is given as in the Theorem \ref{5557}, then $r_2(-n)=t$. We now begin by studying the $4$-rank associated with the imaginary quadratic number field associated with Theorem \ref{5557}.  Suppose $n$ is defined as in Theorem \ref{5557}, then the discriminant $D(-n)=-n$. The positive divisors of $D(-n)$ forms a group $G(n)$ as describe in (\ref{thegroupG}). Moreover, the group $G(n)$ can be identified with the vector space $\mathbb{F}_2^{t+1}$  via the map $\varphi$ as defined in (\ref{varphi-def}). The next lemma determines the $4$-rank of the class group of $\mathbb{Q}(\sqrt{-n})$ for such $n$, as described in Theorem~\ref{5557}.

\begin{lemma}\label{4-rank 5557}
    Let $F=\Q(\sqrt{-n})$ where $n$ is as in Theorem \ref{5557}, then $r_4(-n)=1.$
\end{lemma}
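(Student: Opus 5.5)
We will use Remark~\ref{Waterhouse-remark}(1), which gives $r_4(-n)=\mathrm{Nullity}(R(-n))-1$. So the whole task is to compute the rank of the generalized Rédei matrix $R(-n)$ over $\F_2$. Since $n=Pq$ with $P\equiv 5^t\equiv 5 \pmod 8$ ($t$ odd) and $q\equiv 7\pmod 8$, we have $n\equiv 35\equiv 3\pmod 8$. Hence $D(-n)=-n$, and the prime divisors of the discriminant are exactly $\ell_i=p_i$ for $1\le i\le t$ and $\ell_{t+1}=q$. Thus $R(-n)$ is a $(t+1)\times(t+1)$ matrix.

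\textbf{Step 1: compute the entries via Lemma~\ref{Hilbert symbol-simplification}.}
\begin{itemize}
\item For $i\neq j\le t$, the $(i,j)$ entry is $\phi\legendre{p_j}{p_i}=a_{ij}$, the entry of $A_P$.
\item Entry $(i,t+1)$ is $\phi\legendre{q}{p_i}$. By quadratic reciprocity (since $p_i\equiv 1\pmod 4$) and hypothesis (b), this is $\phi\legendre{p_i}{q}=0$.
\item Entry $(t+1,j)$ is $\phi\legendre{p_j}{q}=0$, again by (b).
\item Diagonal entries for $i\le t$ are $\sum_{j\ne i,\,j\le t}a_{ij}+\phi\legendre{q}{p_i}=a_{ii}$.
\item The $(t+1,t+1)$ entry is $\sum_j\phi\legendre{p_j}{q}=0$.
\end{itemize}
Consequently
\[
R(-n)=\left[\begin{array}{c|c} A_P & \mathbf{0}_t\\ \hline \mathbf{0}_t^{\mathsf T} & 0\end{array}\right].
\]

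\textbf{Step 2: conclude.} By hypothesis (a), $\rank_{\F_2}R(-n)=\rank_{\F_2}(A_P)=t-1$. So the nullity is $(t+1)-(t-1)=2$, and $r_4(-n)=2-1=1$.

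As a sanity check, the null space is spanned by $\varphi(P)=(\mathbf{1}_t^{\mathsf T},0)^{\mathsf T}$ (row sums of $A_P$ vanish) and $\varphi(q)$. This matches $K(-n)=\{1,P,q,n\}$.

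\textbf{Main obstacle.} The only delicate point is justifying that Lemma~\ref{Hilbert symbol-simplification} applies uniformly, including the diagonal and $q$-row entries where the Hilbert symbol at $\ell_i$ of $(\ell_i,-n)$ involves $\legendre{-1}{\ell_i}$. For $i\le t$ we have $\legendre{-1}{p_i}=1$, so no sign issue arises. For $i=t+1$, a direct check gives $\legendre{q,-n}{q}=\legendre{-P}{q}=-\legendre{P}{q}$. Here $\legendre{P}{q}=1$ and $\legendre{-1}{q}=-1$, so this naively gives $-1$, which disagrees with the lemma's value. I would therefore double-check that the lemma's convention includes the $(-1)^{\varepsilon}$ factor, or equivalently use the product formula over all places (the $2$-adic and infinite places contribute) together with the fact that each column of Hilbert-symbol values has product $1$.

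If the corrected $(t+1,t+1)$ entry turned out to be $1$, the rank would become $t$ and the nullity $1$, giving $r_4=0$. So this entry must be verified carefully. Since the column sums in the $q$-column must be consistent with $\varphi(n)$ lying in the kernel (the constant $d=n$ always works), the entry $(t+1,t+1)$ is forced to be $0$. I would use this observation to settle the sign.
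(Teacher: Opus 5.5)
Your argument is correct and is essentially the paper's own proof. You identify $R(-n)$ as $A_P$ bordered by a zero row and column, so hypothesis (a) gives nullity $2$, and Remark~\ref{Waterhouse-remark}(1) then gives $r_4(-n)=1$.

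The ``obstacle'' in your last paragraph is only a slip: your direct check dropped the factor $(-1)^{\alpha\beta\varepsilon(q)}=-1$ from the Hilbert symbol formula. Computed correctly, $\legendre{q,-n}{q}=\legendre{q,-q}{q}\legendre{q,P}{q}=\legendre{P}{q}=1$. This agrees with Lemma~\ref{Hilbert symbol-simplification} and with your fallback observation that $\varphi(n)\in\ker R(-n)$ forces the $q$-row to sum to zero.
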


\begin{proof}
    Observe that as $n$ is given as in Theorem \ref{5557}, discriminant of $\mathbb{Q}(\sqrt{-n})$ is $D(-n) = -n=-p_1p_2\cdots p_tq$.  To compute the $4$-rank, we will use part (1) of Proposition \ref{Waterhouse 4,8 rank} and Remark \ref{Waterhouse-remark}. The generalized Rédei matrix $R(-n)$ defined in \eqref{Redei-Matrix} is a $(t+1)\times (t+1)$ matrix of the form 
     $R(-n)=$ {\scriptsize $\left[\begin{array}{c|c}
     A_P & \mathbf{0}_t\\
    \hline \noalign{\vskip 2pt} 
   \mathbf{0}_t^{\mathsf T} & 0
   \end{array}\right]$} where the matrix $A_P$ is defined as in \eqref{matrix A_n 5557}.  Since $\mathrm{rank}(A_P) = t-1$, the matrix $R(-n)$ has nullity $2$.  Applying part (1) of Remark~\ref{Waterhouse-remark}, we conclude that $r_4(-n)=1$.   
\end{proof}

Now let $n$ and $P$ be as in Theorem \ref{553}, then $r_2(-n)=r_2(-P)=t$ that is $h(-n) \equiv h(-P) \equiv 0 \pmod {2^{t}}$. We now turn to the computation of the $4$-rank of the class group of $\mathbb{Q}(\sqrt{-n})$ and $\mathbb{Q}(\sqrt{-P})$.

\begin{lemma}\label{4-rank 553}
    Let  $n$ be given as in Theorem \ref{553}. Then $r_4(-n)=r_4(-P)=1$.
\end{lemma}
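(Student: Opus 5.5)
We compute both $4$-ranks with part (1) of Remark~\ref{Waterhouse-remark}, which gives $r_4(-m)=\mathrm{Nullity}(R(-m))-1$. So it suffices to write down the generalized Rédei matrices $R(-n)$ and $R(-P)$ explicitly and show that each has nullity $2$.

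\emph{The field $\mathbb{Q}(\sqrt{-n})$.} Here $n=Pq$ with $t$ even, so $P\equiv 5^t\equiv 1\pmod 8$. Together with $q\equiv 3\pmod 8$ this gives $n\equiv 3\pmod 4$, hence $D(-n)=-n$, whose prime divisors are $p_1,\dots,p_t,q$.
\begin{itemize}
\item By Lemma~\ref{Hilbert symbol-simplification}, the off-diagonal entries are $\phi\legendre{\ell_j}{\ell_i}$.
\item For $i,j\le t$ these are exactly the entries of $A_P$, using quadratic reciprocity for $p_i\equiv 1\pmod 4$.
\item Hypothesis (b), $\legendre{p_i}{q}=1$, together with reciprocity gives $\legendre{q}{p_i}=1$, so the row and column of $q$ vanish off the diagonal.
\item The diagonal entry for $q$ is $\phi\bigl(\prod_i\legendre{p_i}{q}\bigr)=0$.
\item For $i\le t$ the diagonal entry is $\sum_{j\ne i}a_{ij}+\phi\legendre{q}{p_i}=a_{ii}$.
\end{itemize}
Hence
\[
R(-n)=\begin{bmatrix} A_P & \mathbf{0}_t\\ \mathbf{0}_t^{\mathsf T} & 0\end{bmatrix},
\]
exactly as in Lemma~\ref{4-rank 5557}. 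By hypothesis (a), $\rank_{\F_2}(A_P)=t-1$, so $R(-n)$ has rank $t-1$ and nullity $(t+1)-(t-1)=2$. Therefore $r_4(-n)=1$.

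\emph{The field $\mathbb{Q}(\sqrt{-P})$.} Since $P\equiv 1\pmod 4$, we have $D(-P)=-4P$, with prime divisors $2,p_1,\dots,p_t$. The Rédei matrix $R(-P)$ is therefore $(t+1)\times(t+1)$. Lemma~\ref{Hilbert symbol-simplification} only handles odd primes, so the entries involving the prime $2$ must be computed directly. This is the one step requiring care.
\begin{itemize}
\item For odd $p_i, p_j$ with $i\ne j$, the entry is $\legendre{p_j,-4P}{p_i}=\legendre{p_j}{p_i}$, which gives $A_P$ off the diagonal.
\item For the odd diagonal entries, $\legendre{p_i,-4P}{p_i}=\legendre{-4P/p_i}{p_i}=\prod_{j\neq i}\legendre{p_j}{p_i}$, since $\legendre{-1}{p_i}=1$. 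These match $a_{ii}$.
\item Column of $2$ at $p_i$: $\legendre{2,-4P}{p_i}=\legendre{2}{p_i}=-1$. So the column indexed by $2$ has entry $1$ in every odd row.
\item Row of $2$: $\legendre{p_j,-4P}{2}$, computed with the $2$-adic formula with $p_j=u$ and $-4P=2^2(-P)$. This gives $(-1)^{2\omega(p_j)+\varepsilon(p_j)\varepsilon(-P)}=1$, because $\varepsilon(p_j)$ is even.
\item The $(2,2)$ entry follows from the product formula, or equivalently from the row-sum identity: it equals the sum of its row, which is $0$.
\end{itemize}
Hence
\[
R(-P)=\begin{bmatrix} 0 & \mathbf{0}_t^{\mathsf T}\\ \mathbf{1}_t & A_P\end{bmatrix},
\]
with the first row and column indexed by the prime $2$.

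By hypothesis (a) and $A_P\mathbf{1}_t=\mathbf 0_t$, the column space of $A_P$ is the coordinate-sum-zero subspace (as in the proof of Lemma~\ref{rankofA}). Since $t$ is even, $\mathbf{1}_t$ lies in this subspace. Adjoining the column $\mathbf 1_t$ therefore does not raise the rank, and the zero first row contributes nothing. So $\rank R(-P)=t-1$, the nullity is $2$, and $r_4(-P)=1$.

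An explicit kernel check confirms this: the kernel is spanned by $(0,\mathbf 1_t)$ and $(1,v)$ with $A_Pv=\mathbf 1_t$.

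The main obstacle is the correct evaluation of the $2$-adic Hilbert symbols in the row and column indexed by $2$. If the row of $2$ turned out to be nonzero, the rank could jump to $t$, so I would double-check these entries with the product formula $\prod_v\legendre{a,b}{v}=1$ as a consistency test.
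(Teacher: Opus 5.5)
Your proof is correct and follows essentially the paper's argument. You identify $R(-n)$ as $A_P$ bordered by a zero row and column, and $R(-P)$ as $A_P$ bordered by a zero top row and the column $\mathbf{1}_t$. You then use $\mathbf{1}_t\in\mathrm{Im}(A_P)$ for $t$ even to get nullity $2$ in both cases, and you also supply the $2$-adic Hilbert symbol computations that the paper leaves implicit.

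One small caution concerns the $(2,2)$ entry of $R(-P)$. Justify it by the product formula, $\legendre{2,-4P}{2}=\prod_i \legendre{2}{p_i}=(-1)^t=1$, or directly from $-P\equiv 7\pmod 8$. Do not appeal to a general row-sum identity: the odd rows of $R(-P)$ sum to $1$, not $0$.
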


\begin{proof}
    Using similar argument as in Lemma \ref{4-rank 5557}, it follows that $r_4(-n)=1$ where $n$ is given as in Theorem \ref{553}.

    Our aim is to prove $r_4(-P)=1.$ Since $P=p_1p_2\cdots p_t \equiv 1 \pmod 4$,  $D(-P)=-4P$. To compute the $4$-rank of the class group of $\Q(\sqrt{-P})$, Remark~\ref{Waterhouse-remark} indicates that it suffices to determine the nullity of the generalized Rédei matrix $R(-P) \in M_{t+1}(\F_{2})$, given by {\scriptsize $\left[\begin{array}{c|c} 0 & \mathbf{0}_t^{\mathsf T} \\ \hline \noalign{\vskip 2pt} \mathbf{1}_t & A_P \end{array}\right]$}. Note that $\text{rank}_{\F_2}(R(-P))=\text{rank}_{\F_2}(R(-P)')$ where $R(-P)'=\left[\begin{array}{c|c}
    \mathbf{1}_t & A_P 
\end{array}\right]$. As noted in Proposition~\ref{snfor553}, the vector $\mathbf{1}_t$ lies in the column space of $A_P$. It follows that $\text{rank}_{\F_2}(R(-P)') = t-1 = \text{rank}_{\F_2}(R(-P))$, which shows that the nullity of $R(-P)$ over $\F_2$ is $2$. Therefore, by part~(1) of Remark~\ref{Waterhouse-remark}, we conclude that $r_4(-P)=1$.

\end{proof}



\subsection{\texorpdfstring{$8$}{lg}-Rank of the class group}
We begin by focusing on the $8$-rank of the class group of the imaginary quadratic field corresponding to $n$, where $n$ is as in Theorem~\ref{5557}.  
By Gauss's genus theory, we have $r_2(-n)=t$, and Lemma~\ref{4-rank 5557} gives $r_4(-n)=1$. Hence, 
$(\mathbb{Z}/2\mathbb{Z})^{t-1} \oplus \mathbb{Z}/4\mathbb{Z} \subseteq C(-n)$, so that $h(-n) \equiv 0 \pmod{2^{t+1}}$.  
Our goal is to determine when $h(-n) \equiv 0 \pmod{2^{t+2}}$, or equivalently, to describe when the $8$-rank of $C(-n)$ satisfies $r_8(-n)=1$.  
We will give a criterion for $r_8(-n)=1$ in terms of the quartic residue symbol. To this end, we first recall the definition of the quartic symbol.

Let $l$ be a prime of the form $4k+1$, and let $k$ be a quadratic residue modulo $l$. Then there exists an integer $r$ such that $k \equiv r^2 \pmod{l}$. The quartic symbol is denoted by $\legendre[4]{k}{l}$ and is defined by
$$\legendre[4]{k}{l} = \legendre{r}{l} = \pm 1 \equiv k^{\frac{l-1}{4}} \equiv r^{\frac{l-1}{2}} \pmod{l},$$
where the sign is $+1$ if and only if $k$ is a quartic residue modulo $l$, or equivalently, $r$ is a quadratic residue modulo $l$. Now let $L = \prod_{i=1}^r l_i$ be a square-free positive integer with $l_i \equiv 1 \pmod{4}$ for all $i$, and let $a \in \mathbb{Z}$ satisfy $(a,L)=1$ and $\legendre{a}{l_i}=1$ for all $i$. Then the quartic residue symbol modulo $L$ is defined multiplicatively by $\legendre[4]{a}{L} = \prod_{i=1}^r \legendre[4]{a}{l_i}.$


\begin{lemma}\label{8 rank condition}
    Consider the natural numbers $n$ satisfying the conditions as in the statement of Theorem \ref{5557}. Then we have $r_8(-n)=1$ if and only if $\legendre[4]{q}{P}=-1$ where $P= p_1 \cdots p_t$.
\end{lemma}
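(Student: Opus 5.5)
The plan is to apply part (2) of Remark~\ref{Waterhouse-remark} to $m=n$. Since $t$ is odd, $P\equiv 5\pmod 8$, so $n=Pq\equiv 35\equiv 3\pmod 8$ and $D(-n)=-n$. Lemma~\ref{4-rank 5557} gives $r_4(-n)=1$, so the remark applies.

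\textbf{Identifying $K(-n)$.} From the proof of Lemma~\ref{4-rank 5557},
\[
R(-n)=\begin{bmatrix}A_P&\mathbf 0_t\\ \mathbf 0_t^{\mathsf T}&0\end{bmatrix},\qquad \ker A_P=\langle\mathbf 1_t\rangle .
\]
Hence the null space of $R(-n)$ corresponds to $K(-n)=\{1,P,q,n\}$, and I take $d_1=q$. By the argument in Case~2 of Lemma~\ref{rankofA}, $\mathrm{Im}(A_P)$ is exactly the set of vectors in $\F_2^t$ with coordinate sum $0$. Therefore $\mathbf C_q=(c_1,\dots,c_t,c_q)^{\mathsf T}$ lies in the column space of $R(-n)$ if and only if $c_q=0$ and $\sum_i c_i=0$. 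So $r_8(-n)=1$ exactly when both conditions hold.

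\textbf{The solution $y_q$ and its parity.} For $d=q$ the equation \eqref{quadratic equation for Legendre} reads $x^2-4qy^2+nz^2=0$. This forces $q\mid x$; writing $x=qx'$ gives
\[
qx'^2+Pz^2=4y^2,
\]
with $x',y,z$ positive and pairwise coprime. I first control parity. If $z$ were even, then $4\mid qx'^2$ with $x'$ odd, which is impossible; so $z$ is odd, and similarly $x'$ is odd. Reducing modulo $8$ gives $4y^2\equiv 7+5\equiv 4\pmod 8$, so $y$ is odd. Primitivity also gives $\gcd(y,n)=1$. Hence $c_i=\phi\bigl(\legendre{y}{p_i}\bigr)$ and $c_q=\phi\bigl(\legendre{y}{q}\bigr)$.

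\textbf{Computing $\sum_i c_i$.} Modulo $p_i$ we have $q\equiv(2y/x')^2$, so
\[
\legendre[4]{q}{p_i}=\legendre{2}{p_i}\legendre{y}{p_i}\legendre{x'}{p_i}=-\legendre{y}{p_i}\legendre{x'}{p_i}.
\]
Taking the product over $i$ and using that $t$ is odd gives $\legendre[4]{q}{P}=-\prod_i\legendre{y}{p_i}\cdot\legendre{x'}{P}$, with Jacobi symbols. Since $P\equiv 1\pmod 4$ and $x'$ is odd and positive, Jacobi reciprocity gives $\legendre{x'}{P}=\legendre{P}{x'}$. Every prime $\ell\mid x'$ satisfies $Pz^2\equiv 4y^2\pmod{\ell}$, so this symbol equals $1$. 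Thus $\sum_i c_i=0$ if and only if $\legendre[4]{q}{P}=-1$.

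\textbf{Showing $c_q=\sum_i c_i$.} I would use the Hilbert product formula $\prod_v\legendre{y,-n}{v}=1$ and check that all the other places contribute trivially.
\begin{itemize}
\item At $\infty$ the symbol is $+1$ because $y>0$.
\item At $2$, both $y$ and $-n$ are odd units and $-n\equiv 5\pmod 8$, so $\varepsilon(-n)$ is even and the symbol is $+1$.
\item At a prime $\ell\mid y$, the relation $qx'^2\equiv -Pz^2\pmod{\ell}$ gives $\legendre{-n}{\ell}=1$, so the symbol is $+1$.
\end{itemize}
Hence $\legendre{y}{q}=\prod_i\legendre{y}{p_i}$, i.e.\ $c_q=\sum_i c_i$, and the two solvability conditions coincide. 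This proves the lemma.

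The step I expect to need the most care is this bookkeeping: the $2$-adic symbol and the parity of $y$, the sign $\prod_i\legendre{2}{p_i}=-1$ coming from $t$ odd, and the reciprocity step for $x'$. These are routine individually but easy to get wrong in sign.
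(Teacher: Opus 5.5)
This is essentially the paper's argument: Remark~\ref{Waterhouse-remark}(2) with $d=q$, the conic $qx'^2+Pz^2=4y^2$, the identity $\legendre[4]{q}{P}=-\legendre{y}{P}\legendre{x'}{P}$ modulo $P$, and $\legendre{x'}{P}=1$ by reduction modulo $x'$. You add two things. First, the exact description of the column space of $R(-n)$. Second, the product-formula identity $c_q=\sum_i c_i$. Together these complete the ``if'' direction, which the paper's written proof leaves open: it derives $c_{t+1}=0$ and $\sum_i c_i=0$ only as necessary conditions.

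There is, however, one false step. The triple $x',y,z$ need not be pairwise coprime, and $x',z$ need not be odd. Primitivity of $(x,y,z)$ does not stop $x'$ and $z$ from sharing a factor $2$. For example, $n=155=5\cdot 31$ satisfies the hypotheses of Theorem~\ref{5557} with $t=1$. The triple $(x,y,z)=(124,13,6)$ is a primitive positive solution of $x^2-4qy^2+nz^2=0$, and it gives $x'=4$, $z=6$. The paper's ``both $x,z$ are odd'' has the same defect. Because $\legendre{2}{P}=-1$, your reciprocity step needs control of the $2$-part of $x'$.

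The repair is short.
\begin{enumerate}
\item Since $qx'^2+Pz^2$ is even, $x'\equiv z\pmod 2$ always. If both are odd, $y$ is odd as you wrote. If both are even, $y$ is odd by primitivity.
\item In the even case write $x'=2a$, $z=2b$, so $qa^2+Pb^2=y^2$ with $y$ odd. Modulo $4$ this forces $a$ even and $b$ odd. Modulo $8$ it then forces $a\equiv 2\pmod 4$.
\item Hence $v_2(x')=2$ and $\legendre{x'}{P}=\legendre{x'/4}{P}$. Your reciprocity argument applies verbatim to the odd number $x'/4$.
\end{enumerate}
Everything else goes through unchanged. The congruence modulo $p_i$ and the Hilbert symbols at $2$, $\infty$ and $\ell\mid y$ use only that $y$ is odd and prime to $n$.
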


\begin{proof}
Suppose that $n$ is as in Theorem \ref{5557}. Observe that $\{\varphi(1), \varphi(n)\}\subset \mathrm{Ker}(R(-n))$, where $\varphi$ is defined as in \eqref{varphi-def}; these correspond to the trivial elements in the null space of $R(-n)$. Moreover, it is easy to verify that $\legendre{q, D(-n)}{p_i}=\legendre{q, D(-n)}{q}=1$, and $\legendre{P, D(-n)}{p_i}=\legendre{P, D(-n)}{q}=1$. Consequently, $\{\varphi(1), \varphi(q), \varphi(P), \varphi(n)\}$ lies in the null space of $R(-n)$. To compute the $8$-rank, we will use this non-trivial elements $q,P$ in the quadratic equation \eqref{quadratic equation for Legendre}. By Remark \ref{Waterhouse-remark}(2), it is enough to consider the element $q$. Hence for the choice $d=q$, consider the solution $(x_d,y_d,z_d)$ of the equation $x^2-4dy^2-D(-n)z^2=0$ defined as in \eqref{quadratic equation for Legendre}. In this case, the generalized Rédei matrix $R(-n)$ in \eqref{Redei-Matrix} is a $(t+1)\times(t+1)$ matrix of the form {\scriptsize $\left[\begin{array}{c|c} A_P & \mathbf{0}_t \\ \hline \noalign{\vskip 2pt} \mathbf{0}_t^{\mathsf T} & 0 \end{array}\right]$}, where $A_P$ is as in \eqref{matrix A_n 5557}. Since each row sum of the matrix $A_{P}$ is zero, it follows immediately that each row sum of the generalized Rédei matrix $R(-n)$ is also zero. If the system $R(-n)\mathbf{v}=\mathbf{C}_d$ in \eqref{8-rank system of equ} has a solution, then we must have $c_1+\cdots+c_t+c_{t+1}=0$, where $\mathbf{C}_d=(c_1,\ldots,c_t,c_{t+1})^{\mathsf T}$. Since the last row of $R(-n)$ is identically zero, the final equation forces $c_{t+1}=0$, and therefore $c_1+\cdots+c_t=0$.  By Remark \ref{Waterhouse-remark}(2) and Lemma \ref{Hilbert symbol-simplification}, we have the condition $c_1+\cdots+c_t=0$ is equivalent to $\legendre{y_d}{p_1\cdots p_t}=\legendre{y_d}{P}=1$. Note that we have $\legendre{P}{y_d}= \legendre{y_d}{P}=1$ since $P \equiv 1 \pmod 4$. For $d=q$, the quadratic equation \eqref{quadratic equation for Legendre} takes the form
$$x^2 - 4q y^2 + (p_1 \cdots p_t q) z^2 = 0,$$
which can be rewritten as
\begin{equation}\label{eq:quartic-form}
4y^2 = qx^2 + P z^2.
\end{equation}


Observe that both $x,z$ are odd integers. Considering \eqref{eq:quartic-form} modulo $P$, we obtain $4y^2\equiv qx^2 \pmod P.$ Taking the quartic residue symbol modulo $P$ then yields
$$\legendre[4]{q}{P}= \legendre[4]{4y^2x^2}{P}= \legendre{2}{P}\legendre{y}{P}\legendre{x}{P}.$$
Since $\legendre{2}{P}=-1$, we conclude that
$\legendre[4]{q}{P}=-\,\legendre{y}{P}\,\legendre{x}{P}.$
Again since $x$ is odd, considering \eqref{eq:quartic-form} modulo $x$, we get $4y^2\equiv Pz^2 \pmod x.$ Hence, $\legendre{P}{x}=\legendre{x}{P}=1$.
Hence $\legendre[4]{q}{P}=-\,\legendre{y}{P}$.
Since $\legendre{y}{P}=1$, we have $$r_8(-n)=1 \iff \legendre[4]{q}{P}=-1.$$
We are done.
\end{proof}

Now we aim to the study $8$-rank of the class group of quadratic imaginary number fields $\Q(\sqrt{-n})$ and $\Q(\sqrt{-P})$ arising from $n$ where $n$ and $P$ are defined as in Theorem \ref{553}. Similarly, in this case also by using Gauss genus theory we obtain $r_2(-n)=r_2(-P)=t$ and by Lemma \ref{4-rank 553} we obtain $r_4(-n)=r_4(-P)=1$. Consequently, $(\mathbb{Z}/2\mathbb{Z})^{t-1} \oplus \mathbb{Z}/4\mathbb{Z} \subseteq C(-n)$ (and $C(-P)$),  and therefore $h(-n)$ (and $h(-P)$) $\equiv 0 \pmod{2^{t+1}}$. Next two lemmas will give us the information about the $8$-rank of the class group of quadratic imaginary number fields $\Q(\sqrt{-n})$ and $\Q(\sqrt{-P})$. 
\begin{lemma}\label{8-rank of 553}
    Let $n=Pq$ be given as in Theorem \ref{553}. Then $r_8(-n)=1$ if and only if $\legendre[4]{q}{P}=1$.
\end{lemma}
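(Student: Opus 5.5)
The proof will follow the one of Lemma~\ref{8 rank condition}, with $q\equiv 3\pmod 8$ and $t$ even in place of $q\equiv 7\pmod 8$ and $t$ odd. Since $t$ is even, $P\equiv 5^t\equiv 1\pmod 8$, so $n=Pq\equiv 3\pmod 8$ and $D(-n)=-n$.

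\emph{Step 1: the Rédei matrix and its kernel.} By hypothesis (b) and quadratic reciprocity (each $p_i\equiv 1\pmod 4$), $\legendre{p_i}{q}=\legendre{q}{p_i}=1$ for all $i$. Lemma~\ref{Hilbert symbol-simplification} then gives
\[
R(-n)=\left[\begin{array}{c|c} A_P & \mathbf{0}_t\\ \hline \mathbf{0}_t^{\mathsf T} & 0\end{array}\right],
\]
exactly as in Lemma~\ref{4-rank 553}. Its null space corresponds to $\{1,q,P,n\}$. By Remark~\ref{Waterhouse-remark}(2), $r_8(-n)=1$ if and only if $R(-n)\mathbf v=\mathbf C_q$ is solvable.

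Because $A_P$ is symmetric of rank $t-1$ with $\ker A_P=\langle\mathbf 1_t\rangle$, the argument in Case~2 of Lemma~\ref{rankofA} shows that $\mathrm{Im}(A_P)$ is exactly the set of vectors with coordinate sum $0$. Hence the system is solvable if and only if
\[
c_{t+1}=0 \quad\text{and}\quad c_1+\cdots+c_t=0 .
\]
By Lemma~\ref{Hilbert symbol-simplification}, the second condition says $\legendre{y}{P}=1$, where $y=y_q$.

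\emph{Step 2: the two conditions coincide.} This is the step I expect to be the main obstacle. In Lemma~\ref{8 rank condition} only necessity of $c_{t+1}=0$ was needed. Here I need an equivalence, so I must show $c_{t+1}=c_1+\cdots+c_t$ automatically.

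I will use the Hilbert product formula for $(y,-n)_v$. At $v=\infty$ the symbol is trivial because $y>0$. At odd primes $\ell\nmid n$ it is trivial: if $\ell\mid y$, then from the equation $-n$ is a square modulo $\ell$ (using primitivity). It remains to compute the $2$-adic symbol $(y,-n)_2$ from the equation below. Since $-n\equiv 5\pmod 8$, this symbol is $(-1)^{v_2(y)}$. So I need to show $y$ is odd, which I expect to follow from a mod $8$ analysis of the equation with $q+P\equiv 4\pmod 8$. If that fails, I would instead check directly that $c_{t+1}=\phi\legendre{y}{q}$ is forced by reducing the equation modulo $q$.

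\emph{Step 3: the quartic computation.} For $d=q$, the equation $x^2-4qy^2+nz^2=0$ forces $q\mid x$. After replacing $x$ by $x/q$ it becomes
\[
4y^2=qx^2+Pz^2 ,
\]
with $x,z$ odd by primitivity and a mod $4$ check. Reducing modulo $P$ and taking quartic symbols gives
\[
\legendre[4]{q}{P}=\legendre{2}{P}\legendre{y}{P}\legendre{x}{P}.
\]
Now $\legendre{2}{P}=(-1)^t=1$ since $t$ is even. Reducing modulo $x$ gives $\legendre{P}{x}=1$, and reciprocity with $P\equiv 1\pmod 4$ gives $\legendre{x}{P}=1$. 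Thus $\legendre[4]{q}{P}=\legendre{y}{P}$.

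Combining this with Steps~1 and~2 yields $r_8(-n)=1\iff \legendre[4]{q}{P}=1$. The sign differs from Lemma~\ref{8 rank condition} precisely because $\legendre{2}{P}$ is now $+1$.
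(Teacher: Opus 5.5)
Your proposal is correct and is essentially the paper's proof, which simply reruns the argument of Lemma~\ref{8 rank condition} with $\legendre{2}{P}=1$. Your Step~2 is a worthwhile addition. The paper only records $c_{t+1}=0$ as a necessary condition and leaves the converse implicit. Your product-formula argument closes that gap: the local symbols away from $n$ are trivial, and $\legendre{y,-n}{2}=1$ because $y$ is odd. This gives $c_{t+1}=c_1+\cdots+c_t$, so solvability really is equivalent to $\legendre{y}{P}=1$. The mod~$8$ analysis does go through, so your fallback of reducing modulo $q$ is not needed.

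One correction: primitivity does not force $x,z$ odd. For $n=2035$, $(x,y,z)=(16,73,10)$ is a primitive solution of $4y^2=11x^2+185z^2$. This is harmless. In that case $y$ is still odd by primitivity. And since $\legendre{2}{P}=1$, in Step~3 you may write $x=2^kx_0$ with $x_0$ odd and still get $\legendre{x}{P}=\legendre{x_0}{P}=\legendre{P}{x_0}=1$.
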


\begin{proof}
    Note that since $P$ has even number of primes in its prime factorization, we have $\legendre{2}{P}=1$. The rest of the proof follows using similar argument as in Lemma \ref{8 rank condition}.
\end{proof}

In order to prove Theorem \ref{553}, we recall the following criteria given by Jung and Yue \cite[Theorem 3.3]{jung20118} for the $8$-rank of the class group of the quadratic imaginary number field $\Q(\sqrt{-P})$ to be $1$ in terms of quartic residue symbols between the prime factors of the discriminant and residue class modulo $16$.

\begin{lemma}\label{8 rank of n_q}
   Let $F=\Q(\sqrt{-P})$ where $P=l_1l_2\cdots l_t$ be a product of distinct primes such that $l_i \equiv 1 \pmod 4$ and assume that $r_4(-P)=1$. Then there exists $P_1,P_2$ such that $P=P_1P_2$ and 
    each of the quadratic Diophantine equations $$x^2 - D(-P) y^2 -4 P_1 z^2 = 0 \quad \text{and} \quad x^2 - D(-P) y^2 - 4P_2 z^2 = 0$$ admit nontrivial solutions over $\Z$. We have the following: 
   \begin{enumerate}
       \item Let $P_1 \equiv P_2 \equiv 5 \pmod 8$. Then $r_8(-P)=1$ if and only if either $P \equiv 9 \pmod{16}$ and $\legendre[4]{2P_1}{P_2}\legendre[4]{2P_2}{P_1}=-1$ or $P \equiv 1 \pmod{16}$ and $\legendre[4]{2P_1}{P_2}\legendre[4]{2P_2}{P_1}=1$.

       \item Let $P_1 \equiv P_2\equiv 1 \pmod 8$. Then $r_8(-P)=1$ if and only if either $P \equiv 1 \pmod{16}$ and $\legendre[4]{2P_1}{P_2}\legendre[4]{2P_2}{P_1}=1$ or $P \equiv 9 \pmod{16}$ and $\legendre[4]{2P_1}{P_2}\legendre[4]{2P_2}{P_1}=-1$.
   \end{enumerate}
\end{lemma}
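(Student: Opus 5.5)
Since this lemma is quoted from Jung and Yue \cite[Theorem 3.3]{jung20118}, one could simply cite it. To check it inside the framework of this paper, I would rederive it from Proposition~\ref{Waterhouse 4,8 rank} and Remark~\ref{Waterhouse-remark}, following the pattern of Lemma~\ref{8 rank condition}. Throughout, the discriminant is $D(-P)=-4P$, so the primes dividing it are $2,l_1,\dots,l_t$.

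\textbf{Step 1: produce the splitting $P=P_1P_2$.} Because $r_4(-P)=1$, the null space of $R(-P)$ has dimension $2$, so it has four elements. Two of them are trivial: $d=1$, and the vector coming from $-P$ (the discriminant itself). A third element is nontrivial. Replacing $d$ by its product with the trivial element if necessary, I would normalise it to an odd divisor $P_1\mid P$ with $P_1\neq 1,P$, and set $P_2=P/P_1$. Then $P_2$ is also in the kernel, since $P_1\cdot P_2=P$ is. By part (1) of Remark~\ref{Waterhouse-remark}, both equations $x^2+4Py^2-4P_iz^2=0$ have primitive integer solutions. Since every $l_i\equiv1\pmod 4$, both $P_i\equiv 1\pmod 4$, so $P_1,P_2$ are either both $\equiv 1$ or both $\equiv 5 \pmod 8$ when $P\equiv 1\pmod 8$. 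I expect to need to check that $P\equiv 1\pmod 8$ is forced here, presumably from the $2$-adic row of the Rédei matrix.

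\textbf{Step 2: translate the $8$-rank condition.} From $x^2+4Py^2=4P_1z^2$ I would write $x=2P_1x'$ and obtain the reduced norm equation $P_1x'^2+P_2y^2=z^2$, with $x',y,z$ pairwise coprime. By part (2) of Remark~\ref{Waterhouse-remark}, $r_8(-P)=1$ holds exactly when $R(-P)\mathbf v=\mathbf C_{P_1}$ is solvable. Since $R(-P)$ has a $2$-dimensional left kernel, solvability is equivalent to $\mathbf C_{P_1}$ being orthogonal to that left kernel. This reduces to a single product of Hilbert symbols of the relevant solution coordinate, taken over the primes dividing $P_1$ (or $P_2$) together with the prime $2$, being equal to $1$.

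\textbf{Step 3: evaluate the symbols.} At the odd primes, the computation runs as in Lemma~\ref{8 rank condition}. Reducing modulo $P_2$ gives $z^2\equiv P_1x'^2$, hence
\[
\legendre[4]{P_1}{P_2}=\legendre{z}{P_2}\legendre{x'}{P_2}.
\]
Symmetrically, reducing modulo $P_1$ expresses $\legendre[4]{P_2}{P_1}$ through $\legendre{z}{P_1}$ and $\legendre{y}{P_1}$. Quadratic reciprocity together with reductions modulo $x'$, $y$ and $z$ then eliminates most of these Legendre symbols. Whenever an even coordinate appears, factors $\legendre{2}{P_i}$ enter, and these combine to produce the symbols $\legendre[4]{2P_1}{P_2}$ and $\legendre[4]{2P_2}{P_1}$.

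\textbf{Step 4: the $2$-adic symbol (main obstacle).} I expect the hardest step to be evaluating the Hilbert symbol at $2$ of the relevant coordinate against $-4P$. This depends on that coordinate modulo $8$, which I would pin down from $P_1x'^2+P_2y^2=z^2$ modulo $16$. I would first treat the parities: exactly one of $x',y$ is even. Writing the even one as $2^k u$ and working modulo $16$, I expect the outcome to be a sign $(-1)^{(P-1)/8}$, which distinguishes $P\equiv1$ from $P\equiv 9\pmod{16}$. Combining this sign with Step 3, separately for $P_1\equiv P_2\equiv 5$ and $P_1\equiv P_2\equiv1\pmod 8$, should give the two stated criteria.
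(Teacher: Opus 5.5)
The paper gives no proof of this lemma; it simply quotes Jung--Yue \cite[Theorem~3.3]{jung20118}, so your opening sentence is all the paper does. Your rederivation, however, has a genuine gap in Step~1: the normalisation to an odd divisor is impossible in general.

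The two trivial vectors of $\ker R(-P)$ are $\varphi(1)$ and $\varphi(P)$, and both have coordinate $0$ at the prime $2$. So the $2$-coordinate is constant on cosets of the trivial subgroup. Multiplying by a trivial element can never turn an even kernel element $d=2P_1$ into an odd one.

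This even case is exactly the one the paper needs. In the proof of Lemma~\ref{4-rank 553}, $R(-P)=\left[\begin{smallmatrix}0&\mathbf 0_t^{\mathsf T}\\ \mathbf 1_t&A_P\end{smallmatrix}\right]$, so $(v_0,v')\in\ker R(-P)$ means $A_Pv'=v_0\mathbf 1_t$. Hence $\ker R(-P)$ corresponds to $\{1,P,2P_v,2P_v'\}$. By Remark~\ref{Waterhouse-remark}(1), no odd $P_1\neq 1,P$ makes $x^2+4Py^2-4P_1z^2=0$ nontrivially solvable. Concretely, take $P=65$ and $P_1=5$. Reducing modulo $13$ gives $x^2\equiv 7z^2$ with $\legendre{7}{13}=-1$, which forces $13\mid x,y,z$. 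On the other hand, $(x,y,z)=(10,1,3)$ solves $x^2+260y^2-40z^2=0$.

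The criteria themselves show that $d=2P_i$ is what is meant. Suppose $P_1$ were in the kernel. Then $\legendre{P_1}{p}=1$ for every $p\mid P_2$, so $\legendre{2P_1}{p}=\legendre{2}{p}$.
\begin{itemize}
\item In case~(1), some $p\mid P_2$ is $\equiv 5\pmod 8$, so $\legendre[4]{2P_1}{P_2}$ is not even defined.
\item In case~(2), the same failure occurs for $P_2$ or for $P_1$. Indeed, $r_4(-P)=1$ rules out all $l_i\equiv 1\pmod 8$, since otherwise $\varphi(2)$ would be a third independent kernel vector.
\end{itemize}
Both symbols are defined precisely when $2P_1,2P_2\in\ker R(-P)$.

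So you should run your argument with the equations $x^2-D(-P)y^2-8P_iz^2=0$. Writing $x=2P_1x'$ reduces this to $P_1x'^2+P_2y^2=2z^2$, which has the same shape as \eqref{addsoe553asoe553b}. Reducing modulo $P_2$ then gives $2P_1x'^2\equiv (2z)^2$, and this is where $\legendre[4]{2P_1}{P_2}$ genuinely comes from.

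This also settles your doubt about $P\equiv 1\pmod 8$. The $2$-row of $R(-P)$ is $(\phi(\legendre{2}{P}),0,\dots,0)$, so an even kernel vector forces $\legendre{2}{P}=1$.

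Finally, Steps~3--4 are still only a forecast. The modulo-$16$ analysis of $P_1x'^2+P_2y^2=2z^2$ has to be carried out, since it produces the sign distinguishing $P\equiv 1$ from $P\equiv 9\pmod{16}$. That computation is the real content of the Jung--Yue theorem.
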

 Recall from Lemma \ref{4-rank 553} that $r_4(-P)=1$, and the integers $P_v$ and $P_v'$ from (\ref{PvPv'}) such that $P_vP_v'=P$ where $P$ is given as in Theorem \ref{553}. By Remark \ref{Waterhouse-remark}(1), we see that the Diophantine equation $ x_d^2-4dy_d^2-D(-m)z_d^2=0$ has non-zero integer solutions for $d=P_v$ and $P_v'$. Hence, applying Lemma~\ref{8 rank of n_q} with $\{P_1,P_2\}=\{P_v,P_v'\}$, we obtain a criterion for $r_8(-P)=1$.

\section{Proof of main results}\label{proof of the theorem}

 We now proceed to the proofs of the main theorems of this article. The proofs are based on the preparatory results developed in Section~ \ref{Selmer group and Monsky} and \ref{4 rank and 8 rank}, in particular Lemmas \ref{local sol}, \ref{4-rank 553}, \ref{8 rank condition}, \ref{8 rank of n_q} and Propositions~\ref{snfor5557}, \ref{r_n=2}. Combining these intermediate results, we derive the proofs of Theorems \ref{5557} and \ref{553}.

\begin{proof}[Proof of Theorem \ref{5557}] \label{Proof of thm 1}

Let $n$ be as defined in the statement of Theorem~\ref{5557}. Suppose further that $n$ is a congruent number. Then the elliptic curve $E_n(\mathbb{Q})$ admits a point of infinite order, i.e.\ its algebraic rank $r_n$ is positive. By Lemma~\ref{local sol} and Proposition~\ref{snfor5557}, it follows that the system of equations~\eqref{equation} possesses a non-trivial integral solution for at least one pair $(u,u') \in \{(P,P),(q,1),(Pq,P)\}$. Moreover, by Lemma~\ref{8 rank condition}, we have
\[
h(-n) \equiv 0 \pmod{2^{t+2}}
\quad \text{if and only if} \quad \legendre[4]{q}{P}=-1.
\]
Therefore, to establish the main theorem, it suffices to show that whenever $n$ is congruent, for each $(u,u') \in \{(P,P),(q,1),(Pq,P)\}$, we have $\left(\frac{q}{P}\right)_4 = -1$.

\noindent\underline{Case $(u,u^\prime) = (P,P)$:} Substituting $(u,u^\prime) = (P,P)$ in \eqref{equation} and dividing both the equations by $P$, we get 
\begin{equation}\label{equation (P,P)}
    Px^2+qy^2=z^2,  \quad Px^2-qy^2=w^2.
\end{equation}
Reducing the first equation of \eqref{equation (P,P)} modulo $P$ we obtain,
$qy^2\equiv z^2 \pmod P.$
Hence, \begin{equation} \label{quartic(P,P)}
    \legendre[4]{q}{P}=\legendre[4]{y^2z^2}{P}=\legendre{y}{P}\legendre{z}{P}.
\end{equation}
It is easy to observe that $x,z$ and $w$ are odd integers. We now claim that $ y \equiv 2 \pmod{4}$. Taking difference of two equations of \eqref{equation (P,P)} we have
\begin{equation}\label{equation (P,P)diff}
    2qy^2=z^2-w^2= (z+w)(z-w).
\end{equation}
We claim that $y \equiv 2 \pmod{4}$. If $y$ were odd, then reducing equation \eqref{equation (P,P)diff} modulo $8$ and comparing both sides would lead to a contradiction. Hence, $y$ must be even. Now suppose that $y$ is divisible by $4$. Reducing the second equation of
\eqref{equation (P,P)} modulo $8$, we obtain $5 \equiv 1 \pmod{8}$, which is
impossible. Therefore, $y$ is divisible by $2$ but not by $4$. Consequently, we
may write $y = 2y'$, where $y'$ is odd. From (\ref{quartic(P,P)}), we have
\begin{equation}
\label{quartic(P,P)1}
\legendre[4]{q}{P}=\legendre{2y^\prime}{P} \legendre{z}{P}= \legendre{2}{P} \legendre{y^\prime}{P} \legendre{z}{P}= - \legendre{y^\prime}{P}\legendre{z}{P}.    
\end{equation}
$$$$
Now first equation of \eqref{equation (P,P)} can be written as $Px^2-4qy{^\prime}^2=w^2.$ Reducing modulo $y^\prime$, we have $Px^2\equiv w^2 \pmod {y^\prime}$ which implies $\legendre{P}{y^\prime}=\legendre{y^\prime}{P}=1$.  Hence by \eqref{quartic(P,P)1}, we have  $\legendre[4]{q}{P}= -\legendre{z}{P}.$
Now, adding the two equations in \eqref{equation (P,P)} and then reducing modulo $z$, we obtain $2P x^{2} \equiv w^{2} \pmod{z}.$
This implies that $\legendre{2P}{z}=1$, and hence $\legendre{2}{z} = \legendre{P}{z}.$

Our next goal is to calculate  $\legendre{2}{z}.$
It is easy to check that $\gcd(z+w, z-w)=2$. Now from equation \eqref{equation (P,P)diff} we have $8qy{^\prime}^2=(z+w)(z-w).$ Note that since $z, w$ and $y'$ are odd, then either $z+w \equiv 2 \pmod 4$ and $z-w \equiv 0 \pmod 4$, or $z+w \equiv 0 \pmod 4$ and $z-w \equiv 2 \pmod 4$.   
Assume that $z+w \equiv 2 \pmod{4}$ and $z-w \equiv 0 \pmod{4}$.
Then we may write
$$
z+w = 2 q^{\alpha} y_1^2 \quad \text{and} \quad
z-w = 4 q^{1-\alpha} y_2^2,
$$
where $\alpha \in \{0,1\}$, $\gcd(y_1,y_2)=1$, and we set $y' = y_1 y_2$. Adding these two equations, we obtain
$z = q^{\alpha} y_1^2 + 2 q^{1-\alpha} y_2^2.$  Noting that $y_1^2 \equiv y_2^2 \equiv 1 \pmod{8}
\quad \text{and} \quad
q \equiv 7 \pmod{8}$, we see that $z \equiv \pm 1 \pmod{8}.$ Hence, in this case, $\legendre{2}{z} = 1.$ A similar argument in the remaining case also gives
$\legendre{2}{z} = 1.$ Therefore,
$\legendre{P}{z} = \legendre{z}{P} = 1.$
Consequently, $$\legendre[4]{q}{P} = -\legendre{z}{P} = -1.$$

\noindent\underline{Case $(u,u^\prime) = (q,1)$:} Substituting $(u,u^\prime) = (q,1)$ in \eqref{equation} we get
$$qx^2+ Pqy^2=qz^2, \quad qx^2- Pqy^2=qw^2.$$
Clearing $q$, above equations can be rewritten as 
\begin{equation}\label{equation(q,1)}
    x^2+ Py^2=z^2,  \quad  x^2- Py^2=qw^2.
\end{equation}
Reducing the second equation  of \eqref{equation(q,1)} modulo $P$ we get $x^2\equiv w^2q \pmod P.$
Hence, $\legendre[4]{q}{P}=\legendre[4]{w^2x^2}{P}= \legendre{w}{P}\, \legendre{x}{P}.$
We now compute the Legendre symbols $\legendre{w}{p}$ and $\legendre{x}{P}$.  Observe that $z$ and $w$ are odd integers, while $x \equiv 2 \pmod{4}$. Hence we may write
$x = 2x',$ with $x'$  odd. From the first equation of \eqref{equation(q,1)}, we obtain $(2x')^{2} + Py^{2} = z^{2}.$
Reducing this equation modulo $x'$ gives $Py^{2} \equiv z^{2} \pmod{x'},$
and therefore $\legendre{P}{x'} = 1.$ Consequently,
\[
\legendre{x}{P}
= \legendre{2x'}{P}
= \legendre{2}{P}\,\legendre{x'}{P}
= -\,\legendre{P}{x'}
= -1.
\]
It follows that $\legendre[4]{q}{P} = -\,\legendre{w}{P}.$ Next, reducing the second equation of \eqref{equation(q,1)} modulo $w$, we obtain
$x^{2} \equiv Py^{2} \pmod{w},$
which implies $\legendre{w}{P} = 1.$
Hence, $\legendre[4]{q}{P} = -1.$
\vspace{0.2cm}

\noindent\underline{Case $(u,u^\prime) = (Pq,P)$:}
 Substituting $(u,u^\prime) = (Pq,P)$ in \eqref{equation} we get
$$P^2qx^2+ Pqy^2=Pqz^2, \quad P^2qx^2- Pqy^2=Pw^2.$$
Clearing the factor $Pq$ from the above equations, we may rewrite them as
\begin{equation}\label{equation(Pq,P)}
    Px^2+ y^2=z^2, \quad 
    Px^2- y^2=qw^2.
\end{equation}
Considering the second equation of \eqref{equation(Pq,P)} modulo $P$ we get $-y^2\equiv w^2q \pmod P.$
Hence, $$\legendre[4]{q}{P}=\legendre[4]{-w^2y^2}{P}=\legendre[4]{-1}{P}\legendre{w}{P}\legendre{y}{P}= -\legendre{w}{P}\legendre{y}{P}.$$
Observe that $y, z$ and $w$ are odd. Reducing the second equation of \eqref{equation(Pq,P)} modulo $w$ we get $Px^2\equiv y^2 \pmod w$ which implies $\legendre{P}{w}=1$ i.e $\legendre{w}{P}=1.$
Hence, $\legendre[4]{q}{P}=-\legendre{y}{P}.$
Again reducing the first equation of \eqref{equation(Pq,P)} modulo $y$, we get $Px^2\equiv z^2 \pmod y$ which implies $\legendre{P}{y}=1$ i.e. $\legendre{y}{P}=1.$
Hence, $\legendre[4]{q}{P}=-1.$
Hence the proof.
\end{proof}
Note that when $t = 1$, we consider $A_P = [0]_{1 \times 1}$ (see \eqref{matrix A_n 5557}). We therefore obtain the following result.

\begin{corollary}
    Let $n=pq$ be a congruent number with $p \equiv 5 \pmod 8$ and $q \equiv 7 \pmod 8$ and $\legendre{q}{p}=1$, then $h(-n)\equiv 0 \pmod 8$.
\end{corollary}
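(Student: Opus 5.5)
This is the special case $t=1$ of Theorem~\ref{5557}, so the plan is to check that each hypothesis of that theorem holds for $n=pq$ and then quote its conclusion. Put $p_1=p$, so $P=p$, and $t=1$ is odd. We then need to verify hypotheses (a) and (b).

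For (a): by the convention stated after \eqref{matrix A_n 5557}, when $t=1$ the matrix is $A_P=[0]$. Its rank over $\F_2$ is $0=t-1$, so (a) holds automatically.

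For (b): we must show $\legendre{p}{q}=1$. The corollary assumes $\legendre{q}{p}=1$. Since $p\equiv 1 \pmod 4$, quadratic reciprocity gives $\legendre{p}{q}=\legendre{q}{p}=1$.

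Only one point needs care: that the proof of Theorem~\ref{5557} is not degenerate when $t=1$. First, Lemma~\ref{rankofA} with $r=t=1$ is fine: $A_P^2=[0]$ has rank $0=r-1$. Hence $\rank(M')=2t-1=1$, $\rank(M)=2$, and $s_n=2$ by Lemma~\ref{Selmer rank}, so Proposition~\ref{snfor5557} holds. Second, the Rédei matrix argument in Lemma~\ref{4-rank 5557} gives $r_4(-n)=1$ when $t=1$. Third, the quartic symbol computation in Lemma~\ref{8 rank condition} uses only $\legendre{2}{P}=-1$ and $P\equiv 1\pmod 4$, and both hold for $P=p\equiv 5\pmod 8$.

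With the hypotheses checked, Theorem~\ref{5557} applies: if $n$ is congruent, then $h(-n)\equiv 0 \pmod{2^{t+2}}=2^{3}=8$. I do not expect any step to be a real obstacle. The only thing to watch is the convention $A_P=[0]$ for $t=1$ and the use of reciprocity to turn the hypothesis $\legendre{q}{p}=1$ into condition (b).
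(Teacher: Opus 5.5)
Your proposal is correct and matches the paper: the paper obtains the corollary by specializing Theorem~\ref{5557} to $t=1$, using the convention $A_P=[0]$. Your use of quadratic reciprocity (valid since $p\equiv 1 \pmod 4$) to turn $\legendre{q}{p}=1$ into hypothesis (b), and your check that the supporting lemmas do not degenerate at $t=1$, only make explicit what the paper leaves implicit.
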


\begin{remark}
    This shows that for $t=1$, Theorem \ref{5557} agrees with the main theorem in \cite{das2025class}. Moreover it generalizes this result to the product of arbitrarily many odd number of primes $p_i \equiv 5 \pmod{8}$.
\end{remark}

\subsection{Proof of the Theorem \ref{553}}  \label{Proof of thm 2}

\begin{proof}[Proof of Theorem \ref{553}]
Here $n=Pq$ is as in Theorem~\ref{553}.  By Gauss genus theory and Lemma \ref{4-rank 553}, both $h(-n)$ and $h(-P)$ are divisible by $2^{t+1}$.  Assume further that $n$ is a congruent number. 
To show that
\[
h(-n) \equiv h(-P)+2^{t+1} \pmod{2^{t+2}},
\]
it suffices to prove that the $8$-ranks of the class groups of $\Q(\sqrt{-n})$ and $\Q(\sqrt{-P})$ cannot be simultaneously equal to $1$.

Since $n$ is a congruent number, the elliptic curve $E_n(\Q)$ has a point of infinite order, and hence the Mordell--Weil rank $r_n$ is positive. 
Moreover, by Proposition~\ref{r_n=2}, we have $r_n = 2$. 
By Lemma~\ref{local sol}, the system of equations \eqref{equation} admits a non-trivial integral solution $(x,y,z,w)$ for each pair
\[
(u,u') \in \{(P,P), (P_v,P_v'), (P_v',P_v)\}.
\]
Therefore, it suffices to show that $h(-n) \not\equiv h(-P) \pmod{2^{t+2}}$
holds for at least one pair
\[
(u,u') \in \{(P,P), (P_v,P_v'), (P_v',P_v)\}.
\]
We consider the pair $(P_v,P_v')$. Substituting $(u,u^\prime) = (P_v,P_v')$ in \eqref{equation}, we get 
    \begin{equation*}
    \begin{split}
        Px^2+ny^2= P_v z^2 , \quad 
        Px^2-ny^2=P_v'w^2.
    \end{split}
    \end{equation*}
Observe that $z$ is divisible by $P_v'$. Dividing both of the above equations by $P$ and, for simplicity, denoting $\frac{z}{P_v'}$ again by $z$, we obtain
    \begin{equation}\label{soe553a}
        x^2+qy^2= P_v' z^2, \quad 
        x^2-qy^2=P_v w^2
    \end{equation}
By adding and subtracting the two equations in \eqref{soe553a}, we obtain
\begin{equation}\label{addsoe553asoe553b}
    2x^2 = P_v' z^2 + P_v w^2,
\end{equation}
and
\begin{equation}\label{subsoe553asoe553b}
    2q y^2 = P_v' z^2 - P_v w^2,
\end{equation}
respectively. Observe that $x,z,w$ are odd and $y$ is even. Write $y=2^ry'$ with $y'$ being odd.   
Reducing \eqref{subsoe553asoe553b} modulo $y'$, we have $\legendre{P_v'}{y'}=\legendre{P_v}{y'}$ which implies that $\legendre{y'}{P}=1=\legendre{P}{y'}$. Therefore, $\legendre{y}{P}=\legendre{2}{P}^r\legendre{y'}{P}=1$.
  Reducing \eqref{soe553a} modulo $P_v'$ and the second equation of \eqref{soe553a} modulo $P_v$, we get 
  \begin{equation}
  \label{quarticvv'}
      \legendre[4]{q}{P_v'}= \legendre{x}{P_v'} \legendre{y}{P_v'} \quad \text{and} \quad \legendre[4]{-q}{P_v}= \legendre{x}{P_v} \legendre{y}{P_v}.
  \end{equation}
Using the multiplicativity of the quartic residue symbol and multiplying the equations in \eqref{quarticvv'}, together with $\legendre{y}{P}=1$, we get 
\begin{equation}\label{4 rank of q mod P}
    \legendre[4]{q}{P}= \legendre[4]{-1}{P_v}\legendre{x}{P}
\end{equation}    
Furthermore, multiplying the equation \eqref{addsoe553asoe553b} by $2$ and reducing it modulo $P_v$ and $P_v'$ simultaneously, we get
\begin{equation*}
    \legendre[4]{2P_v'}{P_v}= \legendre{2}{P_v}\legendre{x}{P_v}\legendre{z}{P_v} \quad \text{and} \quad \legendre[4]{2P_v}{P_v'}= \legendre{2}{P_v'}\legendre{x}{P_v'}\legendre{w}{P_v'}.
\end{equation*}
Further multiplying the above two equations and using the fact that $\legendre{2}{P}=1$, we get 
\begin{equation}
\label{quartic symbol of Pv and Pv'}
    \legendre[4]{2P_v'}{P_v} \legendre[4]{2P_v}{P_v'}= \legendre{x}{P}\legendre{z}{P_v} \legendre{w}{P_v'}.
\end{equation}
 It is easy to see that \eqref{addsoe553asoe553b} implies $\legendre{z}{P_v}=\legendre{2}{z}$ and $\legendre{w}{P_v'}=\legendre{2}{w}$. Hence from \eqref{quartic symbol of Pv and Pv'}, we get $\legendre[4]{2P_v'}{P_v} \legendre[4]{2P_v}{P_v'} = \legendre{x}{P} \legendre{2}{z} \legendre{2}{w}$. Note that $P=p_1p_2\cdots p_t \equiv 1 \pmod{8}$. To prove the theorem, we look at the residue class of $P$ modulo $16$ and establish the result for each case separately.
 
\begin{itemize}
    \item [(a)] Let $P\equiv 9 \pmod{16}$. Then we have either $P_v \equiv P_v' \equiv 5 \pmod{16}$, or $P_v\equiv P_v' \equiv 13 \pmod{16}$, or $P_v \equiv 1 \pmod{16}, P_v'\equiv 9 \pmod{16}$, or $P_v \equiv 9 \pmod{16}, P_v'\equiv 1 \pmod{16}$. 
    
    First assume that $P_v \equiv P_v' \equiv 5 \pmod{16}$. Since $x$ is odd, from \eqref{addsoe553asoe553b} it follows that $2\equiv P_vz^2+P_v'w^2 \pmod{16}$ which further implies that either $z^2 \equiv 1 \pmod{16} $ and $w^2 \equiv 9 \pmod{16}$ or $z^2 \equiv 9 \pmod{16} $ and $w^2 \equiv 1 \pmod{16}$. This implies that $z\equiv \pm 1 \pmod{8}$ and $w \equiv \pm 3 \pmod{8}$, or $z\equiv \pm 3 \pmod{8}$ and $w \equiv \pm 1 \pmod{8}$ which further gives $\legendre{2}{z}\legendre{2}{w}=-1$. This shows that $\legendre[4]{2P_v'}{P_v} \legendre[4]{2P_v}{P_v'}=-\legendre{x}{P}$. From (\ref{4 rank of q mod P}) and (\ref{quartic symbol of Pv and Pv'}), $\legendre[4]{2P_v'}{P_v} \legendre[4]{2P_v}{P_v'}=-1 \iff \legendre{x}{P}=1  \iff \legendre[4]{q}{P}=-1$. Hence by Lemma \ref{8-rank of 553} and \ref{8 rank of n_q}, we have $r_8(-P)=1$ if and only if $r_8(-n)=0$. \smallskip

    The cases where $P_v\equiv P_v' \equiv 13 \pmod{16}$, or $P_v \equiv 1 \pmod{16}$ and $ P_v'\equiv 9 \pmod{16}$, or $P_v \equiv 9 \pmod{16}$ and $P_v'\equiv 1 \pmod{16}$, follow exactly similar way as in the previous case and gives the desired equality. \smallskip 

    \item [(b)] Next Assume that $P \equiv 1 \pmod{16}$. 
    In this case, we have either $P_v \equiv P_v'\equiv 1 \pmod{16}$, or $P_v \equiv P_v'\equiv 9 \pmod{16}$, or $P_v \equiv 5 \pmod{16}$ and $P_v'\equiv 13 \pmod{16}$, or $P_v \equiv 13 \pmod{16}$ and $P_v'\equiv 5 \pmod{16}$. 
    
    Let $P_v \equiv P_v'\equiv 1 \pmod{16}$. Since $x$ is odd, in this case also we have $2\equiv P_vz^2+P_v'w^2 \pmod{16}$. Hence it follows that $z^2\equiv w^2 \equiv 1 \pmod{16}$ or $z^2\equiv w^2 \equiv 9 \pmod{16}$ which further implies that $z \equiv w \equiv \pm 1 \pmod{8}$ or $z \equiv w \equiv \pm 3 \pmod{8}$. Therefore $\legendre{2}{z}\legendre{2}{w}=1$ which implies $\legendre[4]{2P_v'}{P_v} \legendre[4]{2P_v}{P_v'}=\legendre{x}{P}$. Now by \eqref{4 rank of q mod P} and \eqref{quartic symbol of Pv and Pv'}, we have $\legendre[4]{2P_v'}{P_v} \legendre[4]{2P_v}{P_v'} = 1 \iff \legendre{x}{P}=1 \iff \legendre[4]{q}{P}=-1$. Hence Lemma \ref{8-rank of 553} and \ref{8 rank of n_q} implies that $r_8(-P)=1$ if and only if $r_8(-n)=0$, or in other words both $r_8(-P)$ and $r_8(-n)$ can not be simultaneously $1$. \smallskip

    The cases where $P_v \equiv P_v'\equiv 9 \pmod{16}$, or $P_v \equiv 5 \pmod{16}$ and $P_v'\equiv 13 \pmod{16}$, or $P_v \equiv 13 \pmod{16}$ and $P_v'\equiv 5 \pmod{16}$ follow similarly as above.
\end{itemize} 
We are done.
\end{proof}

\section{Quantitative Estimates associated with theorem \ref{5557}}\label{Quantitative estimate}

In this section, we establish lower bounds for the number of non-congruent numbers satisfying Theorem \ref{5557}. For real-valued functions $a(x)$ and $b(x)$ defined over $\mathbb{R}$, we write $a(x) \sim b(x) \text{ if }\displaystyle \lim_{x \to \infty}\frac{a(x)}{b(x)}=1.$ For a positive real number $x$ and integer $t$, define the following sets, where $A_P$ is as in \eqref{matrix A_n 5557}:

$$
\begin{aligned}
\mathcal{N}_t  
&:= \left\{ n = p_1 \cdots p_t q : p_1,\ldots,p_t,q \text{ are distinct primes},\ p_i \equiv 5 \pmod{8},\ q \equiv 7 \pmod{8} \right\}, \\
\mathcal{P}_t 
&: =\left\{P = p_1 \cdots p_t : p_1,\ldots,p_t \text{ are distinct prime numbers and } p_i \equiv 5 \pmod{8}\right\}\\
\mathcal{P}_t^{\mathrm{rk}} 
&:= \{ P \in \mathcal{P}_t  :\rank_{\mathbb{F}_{2}}(A_{P}) = t-1 \}, \\
\mathcal{N}_t^{\mathrm{rk}}(x)
&:= \left\{\, n = p_1 \cdots p_t q \le x \;:\;n \in \mathcal{N}_t,\; P = p_1 \cdots p_t \in \mathcal{P}_t^{\mathrm{rk}} \,\right\},\\
\pi_{k}(x) &:= \text{ Number of square-free integer }n \leq x~ \text{ such that $n$ has exactly $k$ distinct prime factors }.
\end{aligned}
$$

\begin{proposition}\label{Quantative bounds main Proposition}
    Let $\mathcal{N}_t^{(0)}(x) := \{ n: n \in \mathcal{N}_t^{\mathrm{rk}}(x) ,~ r_8(-n) = 0 \}$, where $\mathcal{N}_t^{\mathrm{rk}}(x)$ is defined as above. Then $$\#\mathcal{N}_t^{(0)}(x) \sim \frac{1}{2^{2t+3}}\cdot \pi_{t+1}(x) \cdot p(t-1),$$ where $p(t-1)$ denotes the probability that  $\rank_{\mathbb{F}_{2}}(A_{P})=t-1$ with $P \in \mathcal{P}_t  $.
\end{proposition}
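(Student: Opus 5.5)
The plan is to write $\#\mathcal{N}_t^{(0)}(x)$ as a sum of indicator functions over square-free $n\le x$ with exactly $t+1$ prime factors, and to show that the conditions defining $\mathcal{N}_t^{(0)}(x)$ are asymptotically independent events with explicit densities. There are three layers:
\begin{enumerate}
\item[(i)] the congruence conditions $p_i\equiv 5$, $q\equiv 7 \pmod 8$ on the prime factors;
\item[(ii)] the condition $\rank_{\F_2}(A_P)=t-1$, which depends only on the Legendre symbols $\legendre{p_j}{p_i}$, $i\ne j$;
\item[(iii)] the condition $r_8(-n)=0$.
\end{enumerate}
For (iii) I would use the criteria of Lemma~\ref{4-rank 5557} and Lemma~\ref{8 rank condition}, which turn it into conditions on further Legendre symbols and on a quartic residue symbol.

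First, by the prime number theorem in arithmetic progressions (Landau's theorem for $\pi_{t+1}(x)$ restricted to residue classes), the prime factors of a typical $n$ counted by $\pi_{t+1}(x)$ are equidistributed among the odd classes mod $8$. This should produce the power-of-$4$ factor $2^{-2(t+1)}$ coming from the $t+1$ residue conditions. Second, conditional on (i), I would invoke the independence of the quadratic symbols $\legendre{p_j}{p_i}$, in the style of Rhoades and Fouvry--Klüners. Concretely, one expands $\frac{1}{2}\bigl(1\pm\legendre{p_j}{p_i}\bigr)$ as characters and bounds every non-trivial product of Jacobi symbols using the large sieve for quadratic characters (Heath-Brown's bilinear estimate) together with Siegel--Walfisz for the remaining short variables. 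This shows that the $\binom{t}{2}$ symbols, and hence the matrix $A_P$, are equidistributed. The probability that $\rank_{\F_2}(A_P)=t-1$ is then by definition $p(t-1)$, and it factors out.

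Third, for $n\in\mathcal{N}_t^{\mathrm{rk}}(x)$, Lemma~\ref{8 rank condition} expresses $r_8(-n)$ through $\legendre[4]{q}{P}$, once the auxiliary Legendre conditions are in place. I would show that the event $\legendre[4]{q}{P}=-1$ has conditional density $\tfrac12$, independently of everything else. Since $\legendre[4]{q}{P}=\prod_i\legendre[4]{q}{p_i}$ and $q$ ranges over a long interval, this amounts to equidistribution of $q$ in the splitting classes of the degree-$4$ Kummer extensions $\Q(i,\sqrt[4]{p_i})$, via Chebotarev with the $p_i$ fixed. Multiplying the densities, and tracking which of the $t+1$ factors plays the role of $q$, should give the constant $2^{-(2t+3)}\,p(t-1)$ in front of $\pi_{t+1}(x)$.

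The main obstacle is uniformity. In $\pi_{t+1}(x)$ the small primes may be tiny while $q$ is large, or the reverse. Chebotarev applied to $q$ needs $q$ much larger than the conductor $\prod p_i$, which fails on the part of the range where $q$ is small and $P$ is large. There I would switch the roles of the variables and use quartic reciprocity to rewrite $\legendre[4]{q}{P}$ as a character in the $p_i$ modulo a quantity depending on $q$. Oscillation then follows from a bilinear (Friedlander--Iwaniec type) bound for quartic symbols. The contribution of the degenerate ranges, where two or more primes are very small, should be $o(\pi_{t+1}(x))$ by a crude count. Checking that these ranges glue together without a loss is where I expect the real work to lie.
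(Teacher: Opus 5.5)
Your three-layer decomposition is the same as the paper's. Its proof multiplies the Dirichlet density $4^{-(t+1)}$, the probability $p(t-1)$, and $\tfrac12$ for $\legendre[4]{q}{P}$, asserting independence without proof; your analytic programme would be a genuine strengthening of that. The gap is your final step, the claim that multiplying the densities ``should give'' $2^{-(2t+3)}p(t-1)$. Carried out honestly, your own layers do not produce this constant, for two reasons.

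First, Lemma~\ref{8 rank condition}, and indeed the very definition of $\legendre[4]{q}{P}$, requires hypothesis (b), $\legendre{q}{p_i}=1$ for all $i$ (and $t$ odd). This condition is not part of $\mathcal{N}_t^{\mathrm{rk}}(x)$.
If you impose these ``auxiliary Legendre conditions'', as your layer (iii) suggests, they cost an extra factor $2^{-t}$.
If you do not impose them, take $b\neq\mathbf{0}_t$, where $b_i=\phi\legendre{q}{p_i}$. The R\'edei matrix is then $\left[\begin{smallmatrix} A_P+\mathrm{diag}(b) & b\\ b^{\mathsf T} & \sum_i b_i\end{smallmatrix}\right]$, which frequently has rank $t$. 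In that case $r_4(-n)=0$, and hence $r_8(-n)=0$ automatically. Already for $t=1$ the heuristic density of $\mathcal{N}_1^{(0)}(x)$ is $\tfrac18\cdot\tfrac34=\tfrac{3}{32}$ of $\pi_2(x)$, not $\tfrac{1}{32}$.

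Second, tracking which of the $t+1$ factors plays the role of $q$ produces a multiplicity rather than a cancellation. Any of the $t+1$ prime factors may be the one $\equiv 7\pmod 8$, so $\#\{n\in\mathcal{N}_t : n\le x\}\sim(t+1)\,4^{-(t+1)}\,\pi_{t+1}(x)$.

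Now impose (b), which is what the subsequent inequality $\#Q_{t,x}\ge\#\mathcal{N}_t^{(0)}(x)$ actually needs. The density count then predicts $\#\mathcal{N}_t^{(0)}(x)\sim(t+1)\,2^{-(3t+3)}\,p(t-1)\,\pi_{t+1}(x)$. This matches the stated $2^{-(2t+3)}p(t-1)\,\pi_{t+1}(x)$ only when $t+1=2^t$, i.e.\ $t=1$. The paper arrives at its constant through exactly these two omissions. So no amount of uniformity work of the kind in your last paragraph can close the gap; the definition of $\mathcal{N}_t^{(0)}(x)$ and the constant have to be corrected first.

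There is also a smaller slip in the field you use. For fixed $p_i$, the map $q\mapsto\legendre[4]{q}{p_i}$ is a quartic Dirichlet character modulo $p_i$, so Siegel--Walfisz already handles that regime. By contrast, $\Q(i,\sqrt[4]{p_i})$ is the wrong field: $q\equiv 7\pmod 8$ is inert in $\Q(i)$, and its Frobenius in that dihedral extension only detects $\legendre{p_i}{q}$. The field $\Q(i,\sqrt[4]{q})$ is the relevant one in the opposite regime, with $q$ small and $p_i$ varying.
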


\begin{proof}
    Let $\mathbb{P}(\cdot)$ denote the natural density of primes satisfying a given condition. By Dirichlet's theorem, primes are equidistributed among the reduced residue classes modulo $8$. In particular,
    \[\mathbb{P}\left(p \equiv 5 \pmod{8} \right)= \lim_{x \to \infty} \frac{\#\{\, p \le x : p \text{ prime},\ p \equiv 5 \!\!\pmod{8} \,\}}{\#\{\, p \le x : p \text{ prime} \,\}} =\frac{1}{\phi(8)}=\frac{1}{4}.\] 
 Similarly, $\mathbb{P}\left(q \equiv 7 \pmod 8\right)=\frac{1}{\phi(8)}=\frac{1}{4}.$ Thus, the density factor coming from fixed $t$ primes in residue class $5$ modulo $8$ and one prime in $7$ modulo $8$ is equal to $\frac{1}{\phi(8)^{t+1}}=\frac{1}{4^{t+1}}$. Hence, as $x \to \infty$, $$\# \mathcal{N}_t^{\mathrm{rk}}(x) \sim \dfrac{1}{2^{2t+2}}\cdot \pi_{t+1}(x)\cdot p(t-1).$$
For $n \in \mathcal{N}_t^{\mathrm{rk}}(x)$, we have
$\mathbb{P}\left(\legendre[4]{q}{p_1\cdots p_t}=1\right) = \frac{1}{2}.$ By Lemma \ref{8 rank condition}, $r_8(-n)=0$ if and only if $\legendre[4]{q}{P}=1$ where $n= Pq$ and $P=p_1 \cdots p_t$. So we have,
 $$\#\mathcal{N}_t^{(0)}(x) \sim \frac{1}{2^{2t+3}}\cdot \pi_{t+1}(x)\cdot p(t-1).$$
\end{proof}

The following lemma allows us to compute the term $p(t-1)$, as defined in Proposition \ref{Quantative bounds main Proposition} for any positive integer $t$.

\begin{lemma}
    Let $p(t-1)$ be as defined in Proposition \ref{Quantative bounds main Proposition}. Then $p(0)=1$, and for $t\geq 2$, we have $$p(t-1)=
\prod_{\substack{1 \le i \le t-1, \\ i \text{ odd}}} \left( 1 - \frac{1}{2^{i}} \right).$$
\end{lemma}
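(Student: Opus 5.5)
The plan is to reduce the statement to counting nonsingular symmetric matrices over $\F_2$. For $t=1$ the convention $A_P=[0]$ has rank $0=t-1$, so $p(0)=1$. For $t\ge 2$, first fix the probabilistic model. Since every $p_i\equiv 5\pmod 8$ satisfies $p_i\equiv 1\pmod 4$, quadratic reciprocity gives $\legendre{p_j}{p_i}=\legendre{p_i}{p_j}$. Hence $A_P$ is symmetric, its diagonal is determined by the off-diagonal entries, and $A_P$ is determined by the $\binom{t}{2}$ entries $a_{ij}$ with $i<j$. I would invoke the standard equidistribution of the Legendre symbols among primes in fixed residue classes modulo $8$ (Chebotarev/Dirichlet applied to the multiquadratic fields $\Q(\sqrt{p_1},\dots,\sqrt{p_t})$, in the same sense of density as in Proposition~\ref{Quantative bounds main Proposition}). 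Under this equidistribution the vector $(a_{ij})_{i<j}$ is uniformly distributed on $\F_2^{\binom t2}$. Thus $p(t-1)$ equals the proportion of symmetric $t\times t$ matrices over $\F_2$ with zero row sums whose rank is $t-1$.

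Next, set $m=t-1$ and build a bijection between the set $\mathcal S_t$ of symmetric $t\times t$ matrices with all row sums zero and the set of all symmetric $m\times m$ matrices. Send $A\in\mathcal S_t$ to its upper-left $m\times m$ block $B$. Conversely, $B$ determines $A$. The last column of $A$ is $B\mathbf 1_m$, because the row sums vanish. By symmetry the last row is $\mathbf 1_m^{\mathsf T}B$, and the corner entry is $\mathbf 1_m^{\mathsf T}B\mathbf 1_m$. Any symmetric $B$ arises this way. Both sets have $2^{m(m+1)/2}=2^{\binom t2}$ elements, so the map is a bijection. Moreover,
$$A=\begin{bmatrix} I_m & 0\\ \mathbf 1_m^{\mathsf T} & 1\end{bmatrix}\begin{bmatrix} B & 0\\ 0 & 0\end{bmatrix}\begin{bmatrix} I_m & \mathbf 1_m\\ 0 & 1\end{bmatrix},$$
so $\rank_{\F_2}(A)=\rank_{\F_2}(B)$. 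Hence $\rank(A_P)=t-1$ exactly when $B$ is invertible, and $p(t-1)$ is the proportion of invertible matrices among all symmetric $m\times m$ matrices over $\F_2$.

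The remaining step is the classical count (MacWilliams) of nonsingular symmetric matrices over $\F_2$:
$$N(m)=2^{m(m+1)/2}\prod_{\substack{1\le i\le m\\ i\text{ odd}}}\bigl(1-2^{-i}\bigr).$$
Dividing by $2^{m(m+1)/2}$ gives the claimed formula. I would either cite this count or prove it by recursion on $m$. For the recursion, write a symmetric matrix as $\begin{bmatrix} a & b^{\mathsf T}\\ b & C\end{bmatrix}$ and split into cases according to whether the first row and column can be cleared by the Schur complement of Proposition~\ref{rank of bdm}. If $a=1$, the Schur complement $C-bb^{\mathsf T}$ is an arbitrary symmetric matrix. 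If $a=0$, one pairs the first index with another index to form a $2\times 2$ hyperbolic block. This yields the recursion $N(m)=2^{m-1}N(m-1)+2^{m-1}(2^{m-1}-1)N(m-2)$, which one then checks against the product formula separately for odd and even $m$.

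I expect the main obstacle to be rigorously justifying that the Legendre symbols $\legendre{p_j}{p_i}$ behave as independent fair coins, jointly with the congruence conditions modulo $8$, when the primes are counted inside $\pi_{t+1}(x)$. I would handle this in the same heuristic-density framework already used for Proposition~\ref{Quantative bounds main Proposition}, and cite Chebotarev for the independence. The linear algebra and the matrix count are routine once this model is accepted.
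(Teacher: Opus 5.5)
Your proposal is correct and follows essentially the same route as the paper: reduce $A_P$ to a $(t-1)\times(t-1)$ symmetric matrix of the same rank by deleting a row and the corresponding column, then use the known proportion of nonsingular symmetric matrices over $\F_2$ (the paper cites this count from the literature, while you cite MacWilliams or derive it by your Schur-complement recursion). Your explicit bijection and congruence factorization show that the uniform distribution on the off-diagonal Legendre symbols transfers to the uniform distribution on symmetric $(t-1)\times(t-1)$ matrices, a step the paper only asserts; the equidistribution of those symbols, which you flag as the main obstacle, is likewise taken for granted in the paper.
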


\begin{proof}
    For $t=1$, the matrix $A_P=[0]_{1 \times 1}$, and therefore $p(0)=1$. Suppose that $t \geq 2$. For $P \in \mathcal{P}_t $, we have $P=p_1 p_2 \cdots p_t,$ where $p_i \equiv 5 \pmod{8}$ are distinct primes. Let $A_P$ be the $t \times t$ symmetric matrix over $\mathbb{F}_2$ as defined in \eqref{matrix A_n 5557}. Since the sum of the rows of $A_P$ is zero, its rank is at most $t-1$. Consequently, by deleting a suitable row and the corresponding column from $A_P$, we obtain a $(t-1) \times (t-1)$ symmetric matrix $\widehat{A}_P$ such that $\rank_{\mathbb{F}_{2}}(A_{P}) = \rank_{\mathbb{F}_{2}}(\widehat{A}_{P}).$ Thus, for each $P \in \mathcal{P}_t $, the matrix $A_P$ may be regarded as a $(t-1) \times (t-1)$ symmetric matrix over $\mathbb{F}_2$ with the same rank as $A_P$. To obtain the desired probability, it suffices to compute the probability that a $(t-1)\times(t-1)$ symmetric matrix has full rank. For $t \geq 2$, this probability is known to be $\prod\limits_{\substack{1 \le i \le t-1, \\ i \text{ odd}}} \left( 1 - \frac{1}{2^{i}} \right)$ (see \cite{brent2010determinants}).
\end{proof}

The following lemma provides an asymptotic estimates for $\pi_k(x)$.

\begin{lemma}\cite[Theorem 437]{hardy1979introduction}
    Suppose that $\pi_k(x)$ is defined as above. As $x \to \infty$, we have, $\pi_1(x) \sim \frac{x}{\log x},$ and for all integers $k \geq 2$, $\pi_k(x) \sim \frac{x(\log\log x)^{k-1}}{(k-1)!\,\log x}.$
\end{lemma}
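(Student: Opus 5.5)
This lemma is a classical result cited from Hardy--Wright, Theorem 437, so the plan is to sketch the standard Landau argument by induction on $k$. The case $k=1$ is the prime number theorem. For square-free $n$ it is convenient to also introduce the count $\tau_k(x)$ of all $n\le x$ with exactly $k$ prime factors counted with multiplicity. I would show $\tau_k(x)$ and $\pi_k(x)$ have the same asymptotics. The difference consists of integers $n\le x$ with $p^2\mid n$ for some $p$, and is bounded by $\sum_p \tau_{k-2}(x/p^2)$. This is $O\!\bigl(x(\log\log x)^{k-2}/\log x\bigr)$ once the inductive bound is available, and hence is negligible.

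The main step is the weighted identity. For an integer $n=p_1\cdots p_k$ one has $\sum_{p\mid n}\log p=\log n$. Summing $\log p$ over factorizations $n=p\cdot m$ with $m$ having $k-1$ prime factors gives
\[
\sum_{p\le x}\log p\;\tau_{k-1}(x/p)\approx k\sum_{n\le x,\ \Omega(n)=k}\log n .
\]
The left side is evaluated by the induction hypothesis $\tau_{k-1}(y)\sim y(\log\log y)^{k-2}/((k-2)!\log y)$ together with Mertens' estimate $\sum_{p\le y}\log p/p\sim\log y$. The range $p>x^{1-\delta}$, where $x/p$ is small, is handled by the trivial bound. This yields a main term of $k\,x(\log\log x)^{k-1}/(k-1)!$. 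The right side is converted by partial summation into $k\,\tau_k(x)\log x$ plus a lower-order error, and dividing by $k\log x$ gives the claim.

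The expected obstacle is the integration step $\int \frac{(\log\log(x/p))^{k-2}}{\log(x/p)}\,\frac{\log p}{p}$. One must check that replacing $\log\log(x/p)$ by $\log\log x$ is harmless except in a negligible range of $p$ close to $x$. Doing this carefully produces the factor $(\log\log x)^{k-1}/(k-1)$, and that factor is exactly the source of the $(k-1)!$ in the statement. Since the paper simply quotes this result, in the text I would cite Hardy--Wright and give only the outline above.
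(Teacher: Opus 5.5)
The paper gives no argument here beyond the citation to Hardy--Wright. A Landau-style induction on $\Omega$ is a legitimate route in principle, but the step you call the heart of the proof is described incorrectly, and carried out as written it would not produce the main term.

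First, a bookkeeping error. Each $n\le x$ with $\Omega(n)=k$ is counted once for every distinct prime $p\mid n$, with weight $\log p$, so
\[
\sum_{p\le x}\log p\;\tau_{k-1}(x/p)=\sum_{\substack{n\le x\\ \Omega(n)=k}}\ \sum_{p\mid n}\log p .
\]
This equals $\sum\log n$ up to the non-squarefree terms, with no factor $k$; that factor belongs to sums over ordered $k$-tuples of primes. You then insert an unexplained $k$ into the left-hand main term. Your own computation, however, gives $\frac{x}{(k-2)!}\cdot\frac{(\log\log x)^{k-1}}{k-1}=\frac{x(\log\log x)^{k-1}}{(k-1)!}$, with no $k$. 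The two slips cancel, so the final formula is right, but both intermediate claims are false.

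The more serious problem is that the range analysis is backwards. Put $L=\log x$ and $v=\log(x/p)$. After partial summation with $\sum_{p\le y}\log p/p=\log y+O(1)$, the main term is $\frac{x}{(k-2)!}\int_{O(1)}^{L}\frac{(\log v)^{k-2}}{v}\,dv$. Its mass sits at $v=L^{\theta}$ with $0<\theta<1$, that is, at $p=x\exp\bigl(-(\log x)^{\theta}\bigr)$, which exceeds $x^{1-\delta}$ for every fixed $\delta>0$ once $x$ is large.

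Consequences:

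\begin{itemize}
\item The range $p\le x^{1-\delta}$ contributes only $O_\delta\bigl(x(\log\log x)^{k-2}\bigr)$.
\item On $x^{1-\delta}<p\le x$, the trivial bound $\tau_{k-1}(x/p)\le x/p$ gives about $\delta x\log x$. This swamps the main term $x(\log\log x)^{k-1}/(k-1)!$, so that range cannot be discarded.
\item Replacing $\log\log(x/p)$ by $\log\log x$ is not harmless off a negligible set. On the relevant range $\log\log(x/p)=\theta\log\log x$, and the factor $1/(k-1)$ you want is exactly $\int_0^1\theta^{k-2}\,d\theta$. For $k\ge 3$ the replacement would give $x(\log\log x)^{k-1}/(k-2)!$, wrong by the factor $k-1$.
\end{itemize}

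The correct fix is to truncate only where $x/p\le Y$ for a fixed large $Y$; that piece is trivially $\ll x\log Y$. For $x/p>Y$, apply the inductive asymptotic with relative error $\varepsilon(Y)\to 0$ and evaluate the integral exactly.

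Alternatively, follow Hardy--Wright and work over ordered $k$-tuples:
\begin{itemize}
\item By symmetry the weighted sum equals $k$ times the sum with weight $\log p_k$. Applying $\vartheta(y)\sim y$ to the last prime reduces everything to $L_{k-1}(x)=\sum_{p_1\cdots p_{k-1}\le x}(p_1\cdots p_{k-1})^{-1}$.
\item One has $L_{k-1}(x)\sim(\log\log x)^{k-1}$ by sandwiching between $\bigl(\sum_{p\le x^{1/(k-1)}}1/p\bigr)^{k-1}$ and $\bigl(\sum_{p\le x}1/p\bigr)^{k-1}$.
\end{itemize}
There the $(k-1)!$ is simply $k!/k$, and no delicate integral is needed.
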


\begin{theorem}
    Suppose that $t$ is a positive integer and $x$ is a positive real number. Let $Q_{t,x}$ denote the set of non-congruent numbers $n$ of the form given in Theorem \ref{5557} with $n \le x$. Let $\mathcal{N}_t^{(0)}(x)$ be defined as in Lemma \ref{Quantative bounds main Proposition}. Then, for sufficiently large $x$, we have $$\#Q_{t,x} \geq \#\mathcal{N}_t^{(0)}(x)\sim \frac{1}{2^{2t+3}}\cdot \pi_{t+1}(x) \cdot p(t-1).$$
\end{theorem}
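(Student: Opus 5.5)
The plan is to show that every element of $\mathcal{N}_t^{(0)}(x)$ is a non-congruent number lying in $Q_{t,x}$. The inequality $\#Q_{t,x}\ge \#\mathcal{N}_t^{(0)}(x)$ then follows, and the asymptotic is just Proposition~\ref{Quantative bounds main Proposition}.

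\textbf{Step 1: hypotheses of Theorem~\ref{5557}.} Take $n=Pq\in\mathcal{N}_t^{(0)}(x)$. By definition, $n\le x$, each $p_i\equiv 5\pmod 8$, $q\equiv 7\pmod 8$, and hypothesis (a) $\rank_{\F_2}(A_P)=t-1$ holds. What remains is to check that $t$ is odd and that hypothesis (b), $\legendre{p_i}{q}=1$ for all $i$, holds.

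These conditions are implicitly required in the description ``of the form given in Theorem~\ref{5557}''. So I would either build them into $\mathcal{N}_t$ or note that they enter the proof of Lemma~\ref{8 rank condition}. That proof uses the block shape of $R(-n)$, which relies on $\legendre{q}{p_i}=1$. It also uses $\legendre{2}{P}=-1$, which relies on $t$ being odd.

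\textbf{Step 2: $n$ is non-congruent.} Under these hypotheses, Lemma~\ref{8 rank condition} shows that $r_8(-n)=0$ is equivalent to $\legendre[4]{q}{P}=1$. In particular $r_8(-n)\ne 1$. Since $r_2(-n)=t$ and $r_4(-n)=1$ (Lemma~\ref{4-rank 5557}), the $2$-part of $C(-n)$ is $(\Z/2\Z)^{t-1}\oplus\Z/4\Z$. Hence $2^{t+1}\,\|\,h(-n)$, so $h(-n)\not\equiv 0\pmod{2^{t+2}}$.

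By the contrapositive of Theorem~\ref{5557}, $n$ is not a congruent number. Therefore $n\in Q_{t,x}$, which gives the inclusion $\mathcal{N}_t^{(0)}(x)\subseteq Q_{t,x}$.

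\textbf{Step 3: asymptotics.} Taking cardinalities gives $\#Q_{t,x}\ge\#\mathcal{N}_t^{(0)}(x)$. The asymptotic equivalence is then exactly Proposition~\ref{Quantative bounds main Proposition}, with $p(t-1)$ given by the preceding lemma.

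If condition (b) is imposed in $\mathcal{N}_t$, it contributes an extra density factor of $2^{-t}$. This factor would have to be absorbed into the proposition's count. I would simply invoke the proposition as stated in the paper.

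\textbf{Main obstacle.} The real content is that the proposition's density computation is an equidistribution heuristic, namely independence of $\legendre[4]{q}{P}$ from the residue and rank conditions. For the theorem as stated, I assume it as given. The only genuine logical point is the inclusion in Step 2, which is immediate once the hypotheses of Theorem~\ref{5557} are confirmed.
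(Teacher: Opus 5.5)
Your argument is the paper's: you prove the inclusion $\mathcal{N}_t^{(0)}(x)\subseteq Q_{t,x}$ from $r_8(-n)=0\Rightarrow 2^{t+1}\,\|\,h(-n)$ and the contrapositive of Theorem~\ref{5557}, then appeal to Proposition~\ref{Quantative bounds main Proposition}. You have the implication the right way round. The paper's proof writes ``if $n\in Q_{t,x}$, then $r_8(-n)=0$''. That is the converse of what is needed, and it is contradicted by the paper's own example $n=42090427$, which is non-congruent yet has $h(-n)=704\equiv 0\pmod{2^{5}}$.

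The step to repair is Step~3. The problem you raise in Step~1 is real and is shared by the paper. For the inclusion you must restrict to odd $t$ and build hypothesis (b) into $\mathcal{N}_t$. Otherwise an element of $\mathcal{N}_t^{(0)}(x)$ need not be of the form in Theorem~\ref{5557}, and $\legendre[4]{q}{P}$ is not even defined.

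But the resulting factor $2^{-t}$ cannot be ``absorbed''. The Proposition counts $\mathcal{N}_t^{\mathrm{rk}}(x)$ without (b), and only afterwards, through Lemma~\ref{8 rank condition}, silently assumes (b). So for the set on which your inclusion holds, the same heuristic gives the constant $2^{-(3t+3)}$, not $2^{-(2t+3)}$. Separately, its count of $\mathcal{N}_t$ omits the factor $t+1$ that records which of the $t+1$ prime factors of $n$ is $q$.

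Thus $\#Q_{t,x}\ge\#\mathcal{N}_t^{(0)}(x)$ holds once $\mathcal{N}_t$ is amended. However, you cannot then quote the displayed asymptotic unchanged; the constant has to be recomputed for the amended set.
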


\begin{proof}
If $n$ is a congruent number satisfying the hypotheses of Theorem \ref{5557} then $h(-n) \equiv 0 \pmod {2^{t+2}}.$ Also $h(-n) \equiv 0 \pmod {2^{t+2}}$ if and only if $r_8(-n)=1$. Hence if $n\in Q_{t,x}$, then  $r_8(-n)=0$. By Lemma \ref{8 rank condition}, $r_8(-n)=0$ if and only if $\legendre[4]{q}{P}=1$ where $P= p_1 \cdots p_t$. Hence the result follows from Proposition \ref{Quantative bounds main Proposition}.
\end{proof}

\section{Computational Verification and Concluding Remarks} \label{Computation}

To illustrate our results, we identify several congruent numbers and apply Theorems \ref{5557} and \ref{553}. All computations are verified using SageMath \cite{sage}.

In Table \ref{Table 1 5557}, we list few congruent numbers satisfying the hypotheses of Theorem \ref{5557} for $t=3$, and as a result, we see that the congruence condition $h(-n) \equiv 0 \pmod {2^5}$ is satisfied. Next, we identify some non-congruent numbers by checking when the above congruence fails, i.e., when $h(-n) \not\equiv 0 \pmod {2^5}.$ These values are collected in Table \ref{Table 2 5557}.

\newcolumntype{C}[1]{>{\centering\arraybackslash}p{#1}}

\begin{longtable}{|C{2.5cm}|C{1.2cm}|C{3cm}|C{4.2cm}|C{1.2cm}|}
\caption{Congruent Numbers} \label{Table 1 5557}\\
\hline
$n$ &
$q$ &
$p_1 \cdot p_2 \cdot p_3$ &
$\left(\legendre{p_1}{p_2}, \legendre{p_2}{p_3} , \legendre{p_3}{p_1}, \legendre{q}{p_i} \right)$ &
$h(-n)$ \\
\hline
\endfirsthead

\hline
$n$ &
$q$ &
$p_1 \cdot p_2 \cdot p_3$ &
$\left(\legendre{p_1}{p_2}, \legendre{p_2}{p_3} , \legendre{p_3}{p_1}, \legendre{q}{p_i} \right)$ &
$h(-n)$ \\
\hline
\endhead

\hline
\endfoot

\hline
\endlastfoot
$3789955$ & $199$ & $5 \cdot 13 \cdot 293$ & $(-1,-1,-1, 1)$ & $224$ \\
\hline
$16330355$ & $71$ & $5 \cdot 157 \cdot 293$ & $(-1,-1,-1, 1)$ & $1568$ \\
\hline
$46381411$ & $167$ & $29 \cdot 61 \cdot 157$ & $(-1,-1,-1, 1)$ & $1344$ \\
\hline
$50095091$ & $47$ & $61 \cdot 101 \cdot 173$ & $(-1,-1,-1, 1)$ & $2848$ \\
\hline
$50850571$ &  $31$ & $101 \cdot 109 \cdot 149$ & $(-1,-1,-1, 1)$ & $1248$ \\
\hline
$54971195$ & $239$  &  $5 \cdot 157 \cdot 293$  &  $(-1,-1,-1, 1)$ & $2400$ \\
\hline
$79317011$ & $71$ & $37 \cdot 109 \cdot 277$ & $(-1,-1,-1, 1)$ & $2560$ \\
\hline
$289483979$ & $167$ & $61 \cdot 157 \cdot 181$ & $(-1,-1,-1, 1)$ & $4800$ \\
\hline
$2803700651$ & $191$ &  $197 \cdot 269 \cdot 277 $ & $(-1,-1,-1, 1)$ & $23808$ \\
\hline
\end{longtable}


\begin{longtable}{|C{2.5cm}|C{1.2cm}|C{3cm}|C{4.2cm}|C{1.2cm}|}
\caption{Non-Congruent Numbers} \label{Table 2 5557}\\
\hline
$n$ &
$q$ &
$p_1 \cdot p_2 \cdot p_3$ &
$\left(\legendre{p_1}{p_2}, \legendre{p_2}{p_3} , \legendre{p_3}{p_1}, \legendre{q}{p_i} \right)$ &
$h(-n)$ \\
\hline
\endfirsthead

\hline
$n$ &
$q$ &
$p_1 \cdot p_2 \cdot p_3$ &
$\left(\legendre{p_1}{p_2}, \legendre{p_2}{p_3} , \legendre{p_3}{p_1}, \legendre{q}{p_i} \right)$ &
$h(-n)$ \\
\hline
\endhead

\hline
\endfoot

\hline
\endlastfoot
 $2445755$  & $191$ & $5 \cdot 13 \cdot 197$  & $(-1,-1,-1, 1)$  & $368$  \\
\hline
 $3637595$  & $191$ & $5 \cdot 13 \cdot 293$  & $(-1,-1,-1, 1)$  & $560$ \\
\hline
$3638395$ & $71$  & $5 \cdot 37 \cdot 277$ & $(-1,-1,-1, 1)$ & $304$ \\
\hline
$7130155$ & $31$ & $5 \cdot  157 \cdot 293$ &  $(-1,-1,-1, 1)$ & $368$ \\
\hline
$7856795$ & $31$ & $5 \cdot 173 \cdot 293$ & $(-1,-1,-1, 1)$ & $752$ \\
\hline
$11201603$ & $7$ &  $53 \cdot 109 \cdot277$ & $(-1,-1,-1, 1)$ & $1072$ \\
\hline
$13788827$ & $263$ & $13 \cdot 37 \cdot 109$ & $(-1,-1,-1, 1)$  & $1568$ \\
\hline
$23856307$ & $47$ & $53 \cdot 61 \cdot 157$ & $(-1,-1,-1, 1)$ & $688$ \\
\hline
$26287523$ & $47$ & $53 \cdot 61 \cdot 173$ & $(-1,-1,-1, 1)$ & $1328$\\
\hline
$31491299$ & $7$ & $109\cdot 149\cdot 277$ & $(-1,-1,-1, 1)$ & $2448$\\
\hline
\end{longtable}


In Table \ref{Table 1 553}, we list some congruent numbers satisfying the hypotheses of Theorem \ref{553} for $t=2$ implying that the congruence condition $h(-n) \equiv h(-P) +8 \pmod {2^{4}}$ satisfied. Next, we identify some non-congruent numbers by checking when the above congruence fails, i.e., when $h(-n) \equiv h(-P) \pmod {2^{4}}.$ These values are collected in Table \ref{Table 2 553}. All computations are verified using SageMath \cite{sage}.


\begin{longtable}{|C{2.5cm}|C{1.2cm}|C{3cm}|C{4.2cm}|C{1.4cm}|C{1.2cm}|}
\caption{Congruent Numbers} \label{Table 1 553}\\
\hline
$n$ & $q$ & $p_1 \cdot p_2$ &$\left( \legendre{p_1}{p_2}, \legendre{p_1}{q}, \legendre{p_2}{q} \right)$ & $h(-P)$ & $h(-n)$\\
\hline
\endfirsthead

\hline
$n$ & $q$ & $p_1 \cdot p_2$ &$\left( \legendre{p_1}{p_2}, \legendre{p_1}{q}, \legendre{p_2}{q} \right)$ & $h(-P)$ & h(-n)\\
\hline
\endhead

\hline
\endfoot

\hline
\endlastfoot
$1443$ & $3$ & $13\cdot 37$ & $(-1, 1, 1)$ &  $16$ & $8$\\
\hline
 $2035$ & $11$  & $5 \cdot 37$  & $(-1, 1, 1)$ & $16$ & $8$\\
\hline
$8515$ & $131$ & $5 \cdot 13$ & $(-1, 1, 1)$ & $8$ & $16$ \\
\hline
$11635$ & $179$ & $5 \cdot 13$ & $(-1, 1, 1)$ & $8$ & $16$ \\
\hline
$13715$ & $211$ & $5 \cdot 13$ & $(-1, 1, 1)$ & $8$ & $32$ \\
\hline
$14915$ & $19$ & $5 \cdot 157$ & $(-1, 1, 1)$ & $16$ & $56$ \\
\hline
$15635$ & $59$ & $5 \cdot 53$ & $(-1, 1, 1)$ & $8$ & $32$ \\
\hline
$18715$ & $19$ & $5 \cdot 197$ & $(-1, 1, 1)$ & $24$ & $16$ \\
\hline
$26315$ & $19$ & $5 \cdot 277$ & $(-1, 1, 1)$ & $48$ & $40$ \\
\hline
$86435$ & $59$ & $5 \cdot 293 $ & $(-1, 1, 1)$ & $16$ & $88$ \\
\hline
$120235$ & $139$ & $5 \cdot 173$ & $(-1, 1, 1)$ & $16$ & $56$ \\
\hline

\end{longtable}


\begin{longtable}{|C{2.5cm}|C{1.2cm}|C{3cm}|C{4.2cm}|C{1.4cm}|C{1.2cm}|}
\caption{Non-Congruent Numbers} \label{Table 2 553}\\
\hline
$n$ & $q$ & $p_1 \cdot p_2$ &$\left( \legendre{p_1}{p_2}, \legendre{p_1}{q}, \legendre{p_2}{q} \right)$ & $h(-P)$ & $h(-n)$\\
\hline
\endfirsthead

\hline
$n$ & $q$ & $p_1 \cdot p_2$ &$\left( \legendre{p_1}{p_2}, \legendre{p_1}{q}, \legendre{p_2}{q} \right)$ & $h(-P)$ & $h(-n)$\\
\hline
\endhead

\hline
\endfoot

\hline
\endlastfoot
 $2915$ & $11$ & $5 \cdot 53$ & $(-1, 1, 1)$ & $8$ & $24$  \\
\hline
$4251$ & $3$ & $13 \cdot 109$ & $(-1, 1, 1)$ & $16$ & $16$\\
\hline
$8635$ & $11$ & $5\cdot 157$ & $(-1, 1, 1)$ & $16$ & $16$ \\
\hline
$9035$ & $139$ & $ 5 \cdot 13$ & $(-1, 1, 1)$ & $8$ & $40$ \\
\hline
$16315$ & $251$ & $5 \cdot 13$ & $(-1, 1, 1)$ & $8$ & $24$ \\
\hline
$25715$ & $139$ & $5 \cdot 37$ & $(-1, 1, 1)$ & $16$ & $48$ \\
\hline
$27235$ & $419$ & $ 5 \cdot 13$ & $(-1, 1, 1)$ & $8$ & $40$ \\
\hline
$31915$ & $491$ & $5 \cdot 13$ & $(-1,1,1)$ & $8$ & $24$ \\
\hline
$34715$ & $131$ & $5 \cdot 53$ & $(-1, 1, 1)$ &  $8$ & $56$ \\
\hline
$51467$ & $107$  & $13 \cdot 37$ & $(-1, 1, 1)$ & $16$ & $64$ \\
\hline
\end{longtable}

\textbf{Concluding Remarks:} This article presents two main theorems highlighting a close connection between the arithmetic of congruent number elliptic curves and certain imaginary quadratic fields arising from congruent numbers. In particular, we show how the congruent number property imposes explicit restrictions on the $2$-primary part of the class group. Theorem~\ref{5557} demonstrates that for $n=p_1\cdots p_tq=Pq$ satisfying specific congruence and quadratic residue conditions, the congruent number property forces $2$-divisibility of $h(-n)$ when $t$ is odd. In contrast, Theorem~\ref{553} reveals that when $q\equiv 3 \pmod 8$ and $t$ is even, the congruent number condition induces a nontrivial congruence relation between $h(-n)$ and $h(-P)$. We conclude by outlining natural questions and directions for future research.

\begin{enumerate}
    \item The rank condition $\rank_{\F_2}(A_P)=t-1$ plays a crucial role in in the proofs of both the theorems. It would be interesting to investigate whether this assumption can be weakened, or whether it admits a more conceptual interpretation.

    \item In Theorem~\ref{553}, the finiteness of $\Sha(E_n/\mathbb{Q})[2]$ is assumed. A natural problem is to determine whether this hypothesis can be removed.

    \item In this work, we restrict our attention to square-free integers $n$ having exactly one prime factor congruent to $3\pmod 4$. A natural next step is to extend our results to integers of the form $$n= p_1 \cdots p_t q_1 \cdots q_s,$$ where $p_i \equiv 5 \pmod 8$ and $q_j \equiv 3 \pmod 8$ or $7 \pmod 8$. 

    \item We able to establish lower bounds for the number of non-congruent numbers satisfying Theorem \ref{5557}. It would be interesting if one can provide similar types of lower bounds for Theorem \ref{553}.

    \item Finally, it would be worthwhile to explore whether analogous divisibility or congruence phenomena for class numbers occur for other families of congruent numbers, for different congruence classes of primes, or in the setting of real quadratic fields. 
\end{enumerate}

\subsection*{\it Acknowledgement} The first author acknowledges the support of the DST-INSPIRE Faculty Fellowship [DST/INSPIRE/04/2024/004189]. The second author gratefully acknowledges the first author for his hospitality, valuable research discussion during the visit to IIT Kanpur, where the final part of this work was carried out. The second author also thanks him for providing an excellent research environment. The third author acknowledges the support provided by the Institute Postdoctoral fellowship at IIT Madras.


\nocite{*}
\bibliographystyle{siam}
\bibliography{references}

\end{document}